\newtheorem{thm}{Theorem}[section]
\newtheorem*{thm*}{Theorem}
\newtheorem{cor}[thm]{Corollary}
\newtheorem{lemma}[thm]{Lemma}
\newtheorem{prop}[thm]{Proposition}
\newtheorem{defn}[thm]{Definition}
\newtheorem{proposition}[thm]{Proposition}
\newtheorem{conjecture}[thm]{Conjecture}
\newtheorem*{conjecture*}{Conjecture}
\theoremstyle{definition}
\newtheorem{remark}[thm]{Remark}
 \newtheorem{example}[thm]{Example}
\def\ker{\operatorname{ker}}
\def\codim{\operatorname{codim}}
\def\min{\operatorname{min}}
\def\im{\operatorname{Im}}
\def\max{\operatorname{max}}
\def\c1{\operatorname{c_1}}
\def\c2{\operatorname{c_2}}
\def\Hilb{\operatorname{Hilb}}
\def\Alb{\operatorname{Alb}}
\def\Sym{\operatorname{Sym}}
\def\Mon{\operatorname{Mon}}
\def\rk{\operatorname{rk}}
\def\div{\operatorname{div}}
\def\GCD{\operatorname{GCD}}
\def\gg{\mathfrak{g}}
\def\CC{{\mathbb C}}
\def\ZZ{{\mathbb Z}}
\def\DD{{\mathbb D}}
\def\QQ{{\mathbb Q}}
\def\PP{{\mathbb P}}
\def\LL{{\mathbb L}}
\def\A{{\mathcal A}}
\def\G{{\mathcal G}}
\def\L{{\mathcal L}}
\def\M{{\mathcal M}}
\def\N{{\mathcal N}}
\def\O{{\mathcal O}}
\def\I{{\mathcal J}}
\def\E{{\mathcal E}}
\def\T{{\mathcal T}}
\def\F{{\mathcal F}}
\def\K{{\mathcal K}}
\def\C{{\mathcal C}} 
\def\K{{\mathcal K}}
\def\P{{\mathcal P}}
\def\x{\times}                   
\def\eps{\varepsilon}  
\def\cong{\simeq}
\def\sub{\subseteq}
\def\+{\oplus}                   
\def\*{\otimes}                  
\def\Hom{\operatorname{Hom}}
\def\hom{\operatorname{ \mathfrak{hom}}}
\def\ext{\operatorname{ \mathfrak{ext}}}
\def\Pic{\operatorname{Pic}}
\def\NS{\operatorname{NS}}
\def\Supp{\operatorname{Supp}}
\def\Supp{\operatorname{Supp}}
\def\det{\operatorname{det}}
\def\Sing{\operatorname{Sing}}
\def\Def{\operatorname{Def}}
\def\tC{\widetilde{C}}
\def\tS{\widetilde{S}}
\newcommand{\kahl}{K\"{a}hler }
\begin{document}

\title[Wall divisors and algebraically coisotropic subvarieties of IHS manifolds]{Wall divisors and algebraically coisotropic subvarieties of irreducible holomorphic symplectic manifolds} 

\author{Andreas Leopold Knutsen}
\address{Andreas Leopold Knutsen, Department of Mathematics, University of Bergen,
Postboks 7800,
5020 BERGEN, Norway}
\email{andreas.knutsen@math.uib.no}

\author{Margherita Lelli-Chiesa}
\address{Margherita Lelli-Chiesa, Centro di Ricerca Matematica Ennio De Giorgi, Scuola Normale Superiore, Piazza dei Cavalieri 3, 56100 Pisa, Italy}
\email{margherita.lellichiesa@sns.it}

\author{Giovanni Mongardi}
\address{Giovanni Mongardi, Department of Mathematics, University of Milan, via Cesare Saldini 50, 20133 Milan, Italy}
\email{giovanni.mongardi@unimi.it}

\begin{abstract} 
Rational curves on Hilbert schemes of points on $K3$ surfaces and generalised Kummer manifolds are constructed by using Brill-Noether theory on nodal curves on the underlying surface. It turns out that all wall divisors can be obtained, up to isometry, as dual divisors to such rational curves. The locus covered by the rational curves is then described, thus exhibiting algebraically coisotropic subvarieties. This provides strong evidence for a conjecture by Voisin concerning the Chow ring of irreducible holomorphic symplectic manifolds. Some general results concerning the birational geometry of irreducible holomorphic symplectic manifolds are also proved, such as a non-projective contractibility criterion for wall divisors. 
\end{abstract}

\maketitle


\section{Introduction}

Rational curves play a pivotal role in the study of the birational geometry and the Chow ring of algebraic varieties. The present paper concerns a specific class of varieties, namely, {\it irreducible holomorphic symplectic (IHS) manifolds} and, more precisely, {\it Hilbert schemes of points on $K3$ surfaces} and {\it generalised Kummer manifolds} (cf. \S\ref{sec:due}), and is focused on some special rational curves arising from the Brill-Noether theory of normalisations of curves lying on $K3$ and abelian surfaces. In order to treat the two cases simultaneously, we introduce the following notation: we set $\eps=0$ (respectively, $\eps=1$) when $S$ is a $K3$ (resp., abelian) surface, and we denote by $S^{[k]}_\eps$ the Hilbert scheme of $k$ points on $S$ when $\eps=0$ and the $2k$-dimensional generalised Kummer variety on $S$ when $\eps=1$.



In the last few years, some classical results concerning ($-2$)-curves on $K3$ surfaces have been generalised to higher dimension and in particular it was shown that rational curves fully control the birational geometry of IHS manifolds.  More precisely, Ran \cite{ran} proved that extremal rational curves can be deformed together with the ambient IHS manifold, and this was exploited by Bayer, Hassett and Tschinkel \cite{BHT} in order to determine the structure of the ample cone. The same result was independently obtained by the third named author \cite{Mo} using intrinsic properties of IHS manifolds and a deformation invariant class of divisors, the so-called {\it wall divisors} (cf. Definition \ref{def:wall}), which contains all divisors dual to extremal rays. This class of divisors was also studied by Amerik and Verbitsky \cite{AV}, who investigated fibres of extremal contractions. Indeed, the MBM classes in \cite{AV} turn out to be precisely the dual curve classes to wall divisors, cf. Remark \ref{rem:MBM}.

By results of Bayer and Macr\`i \cite{bm, bm2} and Yoshioka \cite{yoshi}, moduli spaces of stable objects in the bounded derived category of a $K3$ or abelian surface $S$ provide examples of deformations of $S^{[k]}_\eps$ and the space of stability conditions can be used towards computing their ample cones.  

In this paper we use Brill-Noether theory of nodal curves on abelian and $K3$ surfaces in order to exhibit rational curves in $S^{[k]}_\eps$ and describe, in many cases, the locus they cover.  Our construction proceeds as follows. Let $(S,L)$ be a general primitively polarized $K3$ or abelian surface of genus $p:=p_a(L)$ and let $C \in |L|$ be a $\delta$-nodal curve whose normalization $\widetilde{C}$ has a linear series of type $g^1_{k+\eps}$. Existence of a family of such curves having the expected dimension (and satisfying certain additional properties) has been proved in 
\cite{ck,klm} under suitable conditions on the triple $(p,k,\delta)$, cf. Theorem \ref{thm:exist}. Any pencil of degree $k+\eps$ on $\widetilde{C}$ defines a rational curve in $S^{[k]}_\eps$, whose class is 
\[
R_{p,\delta,k}:=L-(p-\delta+k-1+\eps)\mathfrak{r}_k,
\]
in terms of the canonical decomposition $N_1 (S_{\eps}^{[k]}) \cong N_1(S) \+ \ZZ[\mathfrak{r}_k]$, cf. \eqref{eq:N1} and Lemma \ref{juveladra}.
In particular, its Beauville-Bogomolov square is easily computed to be
\[
q(R_{p,\delta,k})= 2(p-1)-\frac{(p-\delta+k-1+\eps)^2}{2(k-1+2\eps)},
\]
cf. \eqref{eq:squarecurve}. An important additional feature of the rational curves obtained in this way is that they move in a family of dimension precisely $2k-2$ in $S^{[k]}_\eps$ and thus survive in all small deformations of $S^{[k]}_\eps$ that keep $R_{p,\delta,k}$ algebraic. 

We prove the following result concerning the dual (in the sense of the lattice duality induced by the Beauville-Bogomolov form) divisor $D_{p,\delta,k}$ to the class $R_{p,\delta,k}$.

\begin{thm} (cf. Theorem \ref{thm:muri_curve_doppio})\label{zero}
The divisor $D_{p,\delta,k}$ is a wall divisor if and only if $q(R_{p,\delta,k})<0$.
\end{thm}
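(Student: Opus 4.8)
The plan is to treat the two implications separately, the reverse one being the substantial part. For the easy direction, I would recall from Definition \ref{def:wall} that every wall divisor has negative Beauville--Bogomolov square. Since $D_{p,\delta,k}$ is by construction the class dual to $R_{p,\delta,k}$ under the Beauville--Bogomolov pairing, and lattice duality preserves the sign of the square, the implication $q(D_{p,\delta,k})<0 \iff q(R_{p,\delta,k})<0$ is automatic. Thus if $D_{p,\delta,k}$ is a wall divisor then necessarily $q(R_{p,\delta,k})<0$, and it remains to prove the converse.

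For the converse, assume $q(R_{p,\delta,k})<0$. The strategy is to exploit the deformation invariance of the class of wall divisors, equivalently of the MBM classes of \cite{AV} (cf. Remark \ref{rem:MBM}): it suffices to produce a single deformation of $S^{[k]}_\eps$ on which $D_{p,\delta,k}$, or a monodromy translate of it, is a wall divisor. The crucial input is that the rational curves in the class $R_{p,\delta,k}$ move in a family of dimension exactly $2k-2 = \dim S^{[k]}_\eps - 2$; by Ran's theorem \cite{ran} such curves deform together with the ambient manifold, so $R_{p,\delta,k}$ remains represented by a $(2k-2)$-dimensional family of rational curves on every small deformation that keeps $R_{p,\delta,k}$ of type $(1,1)$.

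I would then deform $S^{[k]}_\eps$ to a very general IHS manifold $X'$ whose N\'{e}ron--Severi lattice is generated by the primitive class dual to $R_{p,\delta,k}$, so that $\rho(X')=1$; such a deformation exists by surjectivity of the period map together with the density of periods of minimal Picard rank, the hypothesis $q(R_{p,\delta,k})<0$ guaranteeing that $D_{p,\delta,k}^{\perp}$ still has enough positive directions to support a valid period. Precisely because the generator of $\NS(X')$ has negative square, $X'$ carries no ample class and is non-projective, which is exactly why the non-projective contractibility criterion for wall divisors established in this paper is needed. On $X'$ the surviving $(2k-2)$-dimensional family of rational curves sweeps out a divisor, and the contractibility criterion identifies $R_{p,\delta,k}^{\perp}$, namely $D_{p,\delta,k}$, as a wall divisor; deformation invariance then transports this conclusion back to $S^{[k]}_\eps$.

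The main obstacle is this last step: verifying on the non-projective model $X'$ that the persisting family genuinely realizes $D_{p,\delta,k}$ as a wall, rather than merely witnessing $R_{p,\delta,k}$ as an effective negative class. This requires controlling that the rational curves stay irreducible, or at least that their class spans an extremal ray, that the locus they cover is an honest divisor, and that the hypotheses of the non-projective contractibility criterion are met. The dimension count $2k-2$ is what makes all of this work, since it simultaneously guarantees survival of the family under deformation and forces, via generic finiteness of the universal family over its image, the swept-out locus to be of codimension one.
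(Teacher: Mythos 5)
Your setup is the right one and matches the paper's: the easy direction follows because duality preserves the sign of the square and wall divisors have negative square by Definition \ref{def:wall}, and for the converse the correct key input is indeed Proposition \ref{prop:andreas010715} (a $(2k-2)$-dimensional family of rational curves of class $R_{p,\delta,k}$ surviving every small deformation on which the class stays algebraic), followed by passage to a very general such deformation $(X_t,R_t)$ with $N_1(X_t)$ of rank one.

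However, your final step has a genuine gap, and you half-admit it yourself. First, the appeal to the non-projective contractibility criterion (Theorem \ref{thm:mazzone}) is circular: that theorem takes as \emph{hypothesis} that the dual divisor is a wall divisor and deduces contractibility; it cannot be used to certify that $D_{p,\delta,k}$ is a wall divisor. Second, the claim that on $X'$ the surviving family ``sweeps out a divisor'' is unjustified and false in general: a $(2k-2)$-dimensional family of rational curves can cover a locus of any codimension between $1$ and $k$ (this is precisely what Theorems \ref{thm:contrazioni} and \ref{thm:nuovofigo} exhibit, and is the dichotomy in Theorem \ref{thm:mazzone} itself). The paper's argument at this point is much shorter and avoids both issues: on the very general deformation the effective class $R_t$ \emph{spans} $N_1(X_t)$, hence automatically generates an extremal ray of the Mori cone; since $q(R_t)<0$, its dual divisor is a wall divisor by Proposition \ref{prop:wall_vs_mbm} (equivalently \cite[Lemma 1.4]{Mo}), and deformation invariance of wall divisors transports the conclusion back to $S^{[k]}_\eps$. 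No contraction, and no statement about the covered locus, is needed.
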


 By comparison with \cite{bm2,yoshi}, we show that all wall divisors are realized as $D_{p,\delta,k}$ for some integers $p$, $\delta$ and $k$, up to  isometry (in the sense of lattice theory), cf. Proposition \ref{prop:tutti_muri}. This is rather striking, as it shows that the birational geometry of $S^{[k]}_\eps$ can be recovered from classical Brill-Noether theory of curves on the underlying surface, at least when the monodromy group is maximal.  We mention that some wall divisors have also been recently constructed by Hassett and Tschinkel \cite{HT_new}, using a different approach.  

Under opportune assumptions, we explicitly construct the locus in $S^{[k]}_\eps$ covered by our rational curves of class $R_{p,\delta,k}$. When $D_{p,\delta,k}$ is a wall divisor this locus may be described abstractly using only lattice theoretic properties, as in \cite{bm,yoshi} and in the more recent \cite{HT_new}. However, our constructions  only rely on the definition of our curves of class $R_{p,\delta,k}$ and are thus very concrete. 

The first type of construction goes as follows. Let $\mathcal{M}$ be the component of the moduli space of (Gieseker) $L$-stable torsion free sheaves on $S$ with Mukai vector $v=(2, c_1(L),\chi+2(\eps-1))$
containing the Lazarsfeld-Mukai bundle associated with the pushforward to a $\delta$-nodal curve in $S$ of a $g^1_{k+\eps}$ on its normalization. As soon as $\chi:=p-\delta-k+3-5\eps \geq 2\delta+2$,
we construct a variety $\mathcal{P}\to \mathcal{M}\times S^{[\delta]}$ which is generically a projective bundle. The fibre of $\mathcal{P}$ over a point $([\E], \tau)\in \mathcal{M}\times S^{[\delta]}$ is the projectivization  of the space of global sections of $\E$ vanishing along $\tau$. We then define a rational map $g:\mathcal{P}\dashrightarrow S^{[k]}_\eps$ and denote by $T$ the closure of the image of $g$, which is an irreducible component of the locus covered by curves of class $R_{p,\delta,k}$. We show that $g$ is birational, thus obtaining the following:
\begin{thm} (cf. Theorem \ref{thm:contrazioni})\label{uno}
Let $(S,L)$ be a very general primitively polarized $K3$ or abelian surface of genus $p \geq 2$. Let $k \geq 2$ and $0 \leq \delta \leq p-\eps$ be integers such that 
$$\max\{2\delta+2,4\eps\}\leq\chi:=p-\delta-k+3-5\eps \leq \delta+k+1.$$
Then, there is a subscheme $T\subset  S_{\eps}^{[k]}$  birational to a $\mathbb{P}^{\chi-2\delta-1}$-bundle on a holomorphic symplectic manifold $W$ of dimension 
$2(k+1+2\delta-\chi)$. Furthermore, the lines contained in any fibre of the rational projection $T\dashrightarrow W$ have class $R_{p,\delta,k}$.
\end{thm}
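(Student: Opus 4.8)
The plan is to realize $T$ as the birational image of the projective bundle $\mathcal P\to\mathcal M\times S^{[\delta]}$ and to read off from this description both the holomorphic symplectic base $W$ and the curve classes. First I would pin down the moduli space and the incidence variety. Using Theorem \ref{thm:exist} to produce the $\delta$-nodal curves carrying a $g^1_{k+\eps}$, the associated Lazarsfeld--Mukai sheaf lies in $\mathcal M$; since $L$ generates the N\'eron--Severi group the vector $v=(2,c_1(L),\chi+2(\eps-1))$ is primitive, so $\mathcal M$ is smooth of dimension $v^2+2$. The lower bound $\chi\ge\max\{2\delta+2,4\eps\}$ is what I would use to show that the general $[\E]\in\mathcal M$ is $\mu$-stable with $h^0(\E)=\chi$ and $h^1(\E)=h^2(\E)=0$, and that imposing vanishing along a general $\tau\in S^{[\delta]}$ cuts $h^0$ down by exactly $2\delta$ independent conditions. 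Hence $h^0(\E\otimes\mathcal I_\tau)=\chi-2\delta\ge 2$ over a dense open subset, so that $\mathcal P\to\mathcal M\times S^{[\delta]}$ is generically a $\PP^{\chi-2\delta-1}$-bundle whose fibres contain lines.

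Next I would analyse $g$ and identify $W$. Computing from $v$ gives $c_2(\E)=\delta+k+\eps$, so for a general $s\in H^0(\E\otimes\mathcal I_\tau)$ the zero scheme $Z(s)$ is finite of length $\delta+k+\eps$, contains $\tau$, and has residual subscheme $\zeta:=\mathrm{Res}_\tau Z(s)$ of length $k+\eps$; in the abelian case the fixed determinant $\det\E=L$ forces the sum of $\zeta$ in the group law to be constant, so that, after a translation, $\zeta\in S^{[k]}_\eps$. This defines $g([\E],\tau,[s])=\zeta$ and $T=\overline{g(\mathcal P)}$. Since $\mathcal P$ is a $\PP^{\chi-2\delta-1}$-bundle over $\mathcal M\times S^{[\delta]}$, once $g$ is shown birational the base $W$ is read off from this bundle: the identification of $W$ as holomorphic symplectic is immediate for $\eps=0$, where $W=\mathcal M\times S^{[\delta]}$, while for $\eps=1$ one uses the Albanese map of $\mathcal M$ together with the generalised Kummer construction, and checking that the resulting $W$ is holomorphic symplectic of the stated dimension $2(k+1+2\delta-\chi)$ is part of the verification. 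The count $\dim\mathcal P=\dim W+(\chi-2\delta-1)\le 2k$, guaranteed by $\chi\ge 2\delta+2$, confirms in passing that $T\subsetneq S^{[k]}_\eps$ is a proper subscheme.

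The heart of the argument, and the step I expect to be the \emph{main obstacle}, is the birationality of $g$. Each section yields an exact sequence $0\to\O_S\xrightarrow{s}\E\to\mathcal I_{Z(s)}(L)\to 0$, so the datum $([\E],\tau,[s])$ is equivalent to a pair $(Z(s)=\tau\cup\zeta,\ e)$ with $e\in\Ext^1(\mathcal I_{Z(s)}(L),\O_S)$ satisfying the Cayley--Bacharach condition that makes $\E$ locally free. Generic injectivity along the fibres of $\mathcal P$ is clear, since for general $Z(s)$ only one section vanishes on it; the real content is to recover $([\E],\tau)$ from $\zeta$ alone. I would do this by Serre/Lazarsfeld--Mukai duality: a general $\zeta$ lies on a distinguished $\delta$-nodal curve in $|L|$, and reconstructing $\E$ as the unique Lazarsfeld--Mukai bundle of $Z(s)$ exhibits $\tau$ as the residual base locus. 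The upper bound $\chi\le\delta+k+1$ is exactly what I expect to force this residual datum, and hence $\tau$, to be uniquely determined, giving $\deg g=1$; controlling this reconstruction uniformly over a dense open subset, and ruling out the a priori $2\delta$-dimensional ambiguity in the choice of $\tau$, is the delicate point.

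Finally I would compute the curve classes. A line $\ell\subset\PP(H^0(\E\otimes\mathcal I_\tau))$ is a pencil $\{s_\lambda\}$; the section $s_0\wedge s_1\in H^0(L)$ cuts out a $\delta$-nodal curve $C\in|L|$ containing every $Z(s_\lambda)$, and the pencil restricts on $C$ to a $g^1_{\delta+k+\eps}$ with base locus $\tau$, hence to a $g^1_{k+\eps}$ on the normalisation $\tC$. The residual loci $\zeta_\lambda$ are its fibres, so $g(\ell)$ is precisely a rational curve of the type studied in Lemma \ref{juveladra}, and its class is therefore $R_{p,\delta,k}$ by the canonical decomposition \eqref{eq:N1}. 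This identifies the lines in the fibres of the rational projection $T\dashrightarrow W$ with curves of class $R_{p,\delta,k}$ and completes the proof.
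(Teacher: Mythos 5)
Your overall architecture coincides with the paper's: you build the same space $\mathcal P\to\mathcal M\times S^{[\delta]}$, define $g$ by the residual (linked) subscheme of $\tau$ in $Z(s)$, and identify the lines in the fibres with the rational curves of Lemma \ref{juveladra}. The set-up (primitivity of $v$, stability from $\Pic(S)=\ZZ[L]$, $h^1(\E)=h^1(\E\otimes\I_\tau)=0$ from the bound $\chi\geq 2\delta+2$, and the dimension count for $W$, including the Albanese reduction when $\eps=1$) all match Proposition \ref{prop:importante} and the first part of the paper's proof.

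However, there is a genuine gap exactly where you flag "the delicate point": the generic injectivity of $g$. Your proposed mechanism --- recover the "distinguished $\delta$-nodal curve" through a general $\zeta$ and then reconstruct $\E$ as "the unique Lazarsfeld--Mukai bundle of $Z(s)$" --- is circular as stated, because $Z(s)=\tau\cup\zeta$ is not known from $\zeta$: the whole problem is to rule out a second pair $([\E'],\tau')$ with a section whose residual scheme is the same $\zeta$. Moreover, the hypothesis $\chi\le\delta+k+1$ does not play the role you assign to it; in the paper it is simply shown to be equivalent to the existence condition \eqref{eq:boundA} for the nodal curves, and it enters nowhere in the injectivity argument. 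The paper's actual proof of injectivity is a six-step argument that you would need to reproduce in some form: (I) injectivity on fibres of $q$ via $\Hom(\E,\det\E\otimes\I_{W_s})\cong\CC$ (stability plus $h^1(\E)=0$); (II) injectivity in the $\M$-direction using $h^1(\E\otimes\L_0)=0$ for all $\L_0\in\Pic^0(S)$ (Lemma \ref{h1} --- this is why the bound contains $4\eps$, and your sketch never invokes it); (III) injectivity in the $S^{[\delta]}$-direction by showing that otherwise a rational curve would pass through a general point of $S^{[\delta]}$; (IV) generic finiteness via Proposition \ref{bundle_finito}, which requires $g$ to be defined in codimension one --- hence the careful extension of $g$ to $\P^\circ$ across the non-locally-free locus, a step absent from your proposal; and (V)--(VI) the exclusion of identified fibres and the final degree-one conclusion via the maximal rational quotient and the dimension bound of \cite[Thm.~4.4]{AV}. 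Without an argument of this kind the "a priori $2\delta$-dimensional ambiguity in $\tau$" you mention is not ruled out, and the theorem is not proved.
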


 The resulting uniruled subvarieties are contractible (up to birational equivalence) when the curve $R_{p,\delta,k}$ has negative square. In the case where $\delta=0$ and $R_{p,\delta,k}$ has the minimal possible Beauville-Bogomolov square, namely, $-(k+3-2\eps)/2$, we use Theorem \ref{uno} in order to construct a Lagrangian $k$-plane $\mathbb{P}^k\subset  S_{\eps}^{[k]}$ such that $R_{p,\delta,k}$ is the class of its lines, cf. Example \ref{ex:romarosica} and Proposition \ref{theend}. This agrees with Bakker's result \cite[Thm. 3]{Ba} stating that, in the case $\eps=0$, a primitive class generating an extremal ray is the line in a Lagrangian $k$-plane if and only if its square is  $-(k+3)/2$, and suggests that the analogous statement should hold for $\eps=1$. Note that very few examples of Lagrangian planes are explicitly described in the literature, cf. \cite[Ex. 8, 9, 10]{Ba}.  
 
Our rational curves have applications to the Chow ring of IHS manifolds, too. In the recent paper \cite{voi}, Voisin stated the following:
\begin{conjecture}(cf. \cite[Conj. 0.4]{voi})\label{voisin}
Let $X$ be a projective IHS manifold of dimension $2k$ and let $\mathbb{S}_r(X)$ be the set of points in $X$ whose orbit under rational equivalence has dimension at least $r$. Then $\mathbb{S}_r(X)$ has dimension $2k-r$.  
\end{conjecture}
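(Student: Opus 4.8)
The plan is not to attack Conjecture~\ref{voisin} in full generality, but to supply its geometric half for the manifolds $S^{[k]}_\eps$ and their deformations, the other half being a Bloch--Beilinson-type input that lies beyond these methods. I would split the assertion into the upper bound $\dim\mathbb{S}_r(X)\le 2k-r$ and the lower bound $\dim\mathbb{S}_r(X)\ge 2k-r$. The upper bound I regard as the expected (motivic) statement---already for $K3$ surfaces it is Mumford's theorem that a general point has $0$-dimensional orbit---and I would concentrate on producing, for as many values of $r$ as possible, explicit $(2k-r)$-dimensional subvarieties of $S^{[k]}_\eps$ all of whose points have a rational-equivalence orbit of dimension at least $r$; combined with the upper bound this forces the conjectural equality for the corresponding strata.

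The mechanism is the algebraically coisotropic geometry of the loci $T$ from Theorem~\ref{uno}. Recall that $T$ is birational to a $\PP^{\chi-2\delta-1}$-bundle over a holomorphic symplectic manifold $W$ with $\dim W=2(k+1+2\delta-\chi)$, and that the lines of the fibres have class $R_{p,\delta,k}$. A dimension count gives $\codim(T\subset S^{[k]}_\eps)=\chi-2\delta-1$, which equals the fibre dimension, and moreover $\dim W=2k-2\codim(T)$. These two equalities are the numerical signature of an algebraically coisotropic subvariety: I would prove that the holomorphic symplectic form $\sigma$ restricts on $T$ to the pullback of the symplectic form of $W$, so that the projection $T\dashrightarrow W$ realises the null (characteristic) foliation of $\sigma|_T$ and its leaves are exactly the $\PP^{\chi-2\delta-1}$-fibres. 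Concretely this amounts to checking that the curves of class $R_{p,\delta,k}$ are tangent to $\ker(\sigma|_T)$, i.e.\ span the isotropic directions, which is consistent with $W$ being symplectic of the stated dimension. When $q(R_{p,\delta,k})<0$, Theorem~\ref{zero} moreover makes $T$ contractible up to birational equivalence, though this is not needed for the orbit estimate.

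Granting this, each leaf is birational to a projective space swept out by the rational curves $R_{p,\delta,k}$, hence is rationally connected and therefore a constant cycle subvariety: all of its points are rationally equivalent as $0$-cycles on $S^{[k]}_\eps$. Writing $O_x$ for the rational-equivalence orbit of $x$, it follows that for every $x\in T$ the orbit $O_x$ contains the leaf through $x$, so $\dim O_x\ge \chi-2\delta-1$, that is, $T\subseteq\mathbb{S}_{\chi-2\delta-1}(S^{[k]}_\eps)$. Setting $r:=\chi-2\delta-1$ and using $\dim T=2k-r$ yields $\dim\mathbb{S}_r\ge 2k-r$, and together with the upper bound the conjectural value $2k-r$. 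Letting the triple $(p,\delta,k)$ range over the admissible region of Theorem~\ref{uno} produces coisotropic subvarieties of many different codimensions, establishing the equality for the corresponding family of strata; deformation invariance of the class $R_{p,\delta,k}$ then propagates it across all small deformations of $S^{[k]}_\eps$ keeping $R_{p,\delta,k}$ algebraic.

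The hard part will be twofold. First, the upper bound $\dim\mathbb{S}_r\le 2k-r$ is genuinely inaccessible by this circle of ideas, being motivic in nature; so the realistic outcome is strong evidence for Conjecture~\ref{voisin}---the predicted lower bound for an explicit and unexpectedly rich family of strata---rather than a complete proof. Second, even for the lower bound the delicate point is establishing the constant cycle property of the leaves rigorously: one must control the indeterminacy of the rational map $g$ and ensure that the $\PP^{\chi-2\delta-1}$-fibres are genuine constant cycle subvarieties of $S^{[k]}_\eps$, and not merely rationally connected over an open set, so that the orbit estimate holds at \emph{every} point of $T$. Finally, the construction realises only those $r$ of the form $\chi-2\delta-1$ lying in the range of Theorem~\ref{uno}, so some strata, and all IHS manifolds outside the $S^{[k]}_\eps$ deformation class, remain untouched.
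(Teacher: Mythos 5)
The statement you were given is a conjecture (Voisin's \cite[Conj. 0.4]{voi}); the paper does not prove it, and your proposal correctly recognises that no proof is on offer here either --- what can be done is to supply $(2k-r)$-dimensional components of $\mathbb{S}_r(S^{[k]}_\eps)$, and your mechanism for doing so is exactly the paper's (Corollary \ref{cor1}): the $\PP^{\chi-2\delta-1}$-fibres of $T$ from Theorem \ref{thm:contrazioni} are rationally connected, hence constant cycle subvarieties, so every point of $T$ has orbit of dimension at least $r:=\chi-2\delta-1$ and $T\subseteq\mathbb{S}_r(S^{[k]}_\eps)$ with $\dim T=2k-r$. Two adjustments are in order. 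First, the upper bound you describe as ``genuinely inaccessible'' and ``motivic in nature'' is in fact already a theorem of Voisin (\cite[Thm. 1.3]{voi}): a closed subvariety of an IHS manifold contained in $\mathbb{S}_r$ has codimension at least $r$. The paper invokes precisely this to conclude that $T$ is a \emph{component} of $\mathbb{S}_r$ of the conjectured dimension; the genuinely open content of the conjecture is the existence of such components for every $r$ and every projective IHS $X$, not the upper bound. Second, you stop at the values $r=\chi-2\delta-1$ realised by Theorem \ref{uno} and flag the remaining strata as untouched, whereas the paper's second construction (Theorem \ref{thm:nuovofigo}) --- relative symmetric products $\Sym^{k'+\eps}(\widetilde{\mathcal{C}})$ over Severi varieties of nodal curves whose normalisations carry nonspecial divisors, pushed into $S^{[k+\eps]}$ via the Hilbert--Chow morphism and the addition of points --- produces components $W_{r,k'}$ of $\mathbb{S}_r(S^{[k]}_\eps)$ for \emph{every} $1\le r\le k-\eps$, which is what makes the evidence essentially complete in the $S^{[k]}_\eps$ case. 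Your plan is therefore sound as far as it goes, but it reproduces only the first of the paper's two constructions and underestimates both what is already known (the codimension bound) and what the paper achieves (all admissible $r$).
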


The above sets $\mathbb{S}_r(X)$ are countable unions of closed algebraic subsets of $X$ and endow the Chow group $\mathrm{CH}_0(X)$ of $0$-cycles with a filtration $\mathbb{S}_\bullet$ which is conjecturally connected with the Bloch-Beilinson filtration and its splitting predicted by Beauville \cite{B1}. The question about non-emptiness of $\mathbb{S}_r(X)$ is still open and related to the existence problem for algebraically coisotropic subvarieties of $X$. If $X$ has dimension $2k$ and $\sigma$ is its symplectic form, a subvariety $Y\subset X$ of codimension $r$ is {\it algebraically coisotropic} if there exist a ($2k-2r $)-dimensional variety $B$ and a surjective rational map $Y\dashrightarrow B$ such that $\sigma|_Y$ is the pullback of a $2$-form on $B$. The subvarieties $T\subset S_{\eps}^{[k]}$ of Theorem \ref{uno} are algebraically coisotropic by construction and they are components of $\mathbb{S}_r(S_{\eps}^{[k]})$ of dimension $2k-r$, with $r:=\chi-2\delta-1$ (cf. Corollary \ref{cor1}). Starting from $T$ and then applying the natural rational map $S^{[k+\eps]} \x S^{[l-k]} \dasharrow S^{[l+\eps]}$, one obtains a component of $\mathbb{S}_r(S_{\eps}^{[l]})$ for any $l\geq k$. We use this observation in Theorem \ref{thm:contrazioni2} in order to construct components of $\mathbb{S}_r(S_{\eps}^{[k]})$, with $k$ fixed, for several values of $r$.

 Our second  construction of uniruled subvarieties of $S^{[k]}_{\eps}$ is obtained by considering the Severi variety $V_{\{L\},\delta}$ of $\delta$-nodal curves in the continuous system $\{L\}$ with $\delta$ big enough; the assumptions on $\delta$ ensure, in particular, that the normalization $\widetilde{C}$ of any curve in $V_{\{L\},\delta}$ has a $g^1_{k+\eps}$. For any integer $k'$ satisfying suitable conditions, the symmetric product $\Sym^{k'+\eps}(\widetilde{C})$ is generically a $\PP^r$-bundle on $\Pic^{k'+\eps}(\widetilde{C})$, where $r$ depends on the integers $\delta$ and $k'$. By varying them, we exhibit ($2k-r$)-dimensional components of $\mathbb{S}_r(S_{\eps}^{[k]})$ for any $r$, except $r=k$ when $\eps=1$. More precisely, we prove the following:
\begin{thm}(cf. Theorem \ref{thm:nuovofigo})\label{due}
  Let $(S,L)$ be a general primitively polarized $K3$ or abelian surface of genus $p \geq 2$ and fix an integer $k \geq 2$. Then for any integer $r$ such that $1 \leq r \leq k-\eps$, and any integer $k'$ such that $r+\eps \leq k' \leq \min\{k,p+r-\eps\}$,  the set $\mathbb{S}_r(S_{\eps}^{[k]})$ has an irreducible component $W_{r,k'}$ satisfying the following:
  \begin{itemize}
  \item[(i)] $\dim W_{r,k'}=2k-r$;
  \item[(ii)] $W_{r,k'}$ is birational to a $\mathbb{P}^{r}$-bundle and hence algebraically coisotropic;
  \item[(iii)] the class of the lines in the $\PP^r$-fibres is $L-[2(k'+\eps)-r-1]\mathfrak{r}_k$;
  \item[(iv)] the maximal rational quotient of the desingularization of $W_{r,k'}$ has dimension  $2(k-r)$.
 \end{itemize}
\end{thm}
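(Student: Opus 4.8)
\emph{Set-up.} The plan is to realise $W_{r,k'}$ as the closure of the image of a relative linear system built over the Severi variety, and to read off (i)--(iv) from the geometry of that family. First I would set $\delta := p - k' - \eps + r$; the hypotheses $r+\eps \le k' \le \min\{k,p+r-\eps\}$ are then equivalent to $0 \le \delta \le p-\eps$ together with $g := p - \delta = k' + \eps - r \ge 2\eps$, so that Theorem \ref{thm:exist} guarantees that the Severi variety $V_{\{L\},\delta}$ of $\delta$-nodal curves in $\{L\}$ is non-empty of the expected dimension $p-\delta = g$ and that the normalisation $\tC$ of a general member is smooth of genus $g$ and Brill--Noether general. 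Over an \'etale cover of $V_{\{L\},\delta}$ I would form the relative normalisation $\tC$, the relative symmetric product $\Sym^{k'+\eps}(\tC)$ and the relative Picard $\Pic^{k'+\eps}(\tC)$; since $k'+\eps = g+r \ge g$, a general line bundle $M$ of degree $k'+\eps$ is non-special with $h^0(M) = (k'+\eps)-g+1 = r+1$, so the Abel--Jacobi morphism exhibits $\Sym^{k'+\eps}(\tC) \to \Pic^{k'+\eps}(\tC)$ as a $\PP^r$-bundle over the open non-special locus.

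\emph{The map and the class.} Next I would define a rational map $g$ to $S_\eps^{[k]}$: a general effective divisor $D$ of degree $k'+\eps$ on $\tC$ pushes forward to a length-$(k'+\eps)$ subscheme of $S$ supported off the nodes, and adjoining $k-k'$ further points of $S$ (subject, when $\eps=1$, to the condition that the total sum be $0$, so as to land in the generalised Kummer) produces a length-$k$ subscheme. Letting $W_{r,k'}$ be the closure of the image of $g$, a line in a fibre $|M| \cong \PP^r$ is a base-point-free pencil $g^1_{k'+\eps}$ on $\tC$; the $k-k'$ adjoined points stay fixed along it, so the associated rational curve has the same class as the one attached to the pencil in $S_\eps^{[k']}$. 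Computing exactly as in Lemma \ref{juveladra} with $k$ replaced by $k'$ gives the class $L - (p-\delta+k'-1+\eps)\mathfrak{r}_k = L - [2(k'+\eps)-r-1]\mathfrak{r}_k$, which is (iii).

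\emph{Dimensions and coisotropy.} Granting that $g$ is birational onto its image, I would count dimensions: the relative Picard $\{(C,M)\}$ has dimension $2g$, the choice of adjoined points contributes $2(k-k'-\eps)$, and the fibre contributes $r$, for a total $2g + 2(k-k'-\eps) + r = 2k-r$, which is (i); the base $W$ of the $\PP^r$-bundle then has dimension $2(k-r)$. As $W$ is birational to a product of a relative Jacobian and a symmetric product, it carries a generically non-degenerate holomorphic $2$-form; since the $\PP^r$-fibres are rational, every holomorphic form on the total space is pulled back from $W$, so $\sigma|_{W_{r,k'}} = p^{*}\beta$ with $\beta$ generically symplectic on $W$. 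Because $\dim W = \dim W_{r,k'} - r$ is complementary to the codimension $r$, the $\PP^r$-fibration is exactly the characteristic foliation and $W_{r,k'}$ is algebraically coisotropic, which is (ii); the rational connectedness of the fibres together with the non-uniruledness of $W$ identifies $W$ as the maximal rational quotient, giving (iv). Finally, all points of a fibre $\PP^r \subset S_\eps^{[k]}$ are rationally equivalent, so a general point of $W_{r,k'}$ has rational-equivalence orbit of dimension $\ge r$ and $W_{r,k'} \subseteq \mathbb{S}_r(S_\eps^{[k]})$.

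\emph{Maximality and the main obstacle.} To see that $W_{r,k'}$ is an irreducible component of $\mathbb{S}_r$, suppose $W_{r,k'} \subsetneq Z \subseteq \mathbb{S}_r$ with $Z$ irreducible. A general point of $Z$ is moved by an at least $r$-dimensional constant-$\mathrm{CH}_0$ (hence isotropic) subvariety; by Mumford's principle \cite{voi} the form $\sigma|_Z$ is pulled back along the orbit fibration, so $\ker(\sigma|_Z)$ has dimension $\ge r$ while being contained in the $\sigma$-orthogonal $T_Z^{\perp}$ of dimension $2k - \dim Z$. This forces $\dim Z \le 2k-r = \dim W_{r,k'}$, whence $Z = W_{r,k'}$. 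I expect the genuinely technical step, and the main obstacle, to be the birationality of $g$: one must show that a general length-$k$ subscheme in $W_{r,k'}$ determines the extra points (those lying off any nodal curve of $\{L\}$), the curve $C$, the line bundle $M$ and the divisor $D$ uniquely. This is exactly where the very generality of $(S,L)$ enters, through the uniqueness of the $\delta$-nodal curve of $\{L\}$ carrying the moving points and the Brill--Noether non-degeneracy of the pencils supplied by Theorem \ref{thm:exist}.
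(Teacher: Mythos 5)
Your construction is the same as the paper's: set $g=k'+\eps-r$, $\delta=p-g$, take the relative symmetric product $\Sym^{k'+\eps}(\widetilde{\mathcal{C}})$ over the Severi variety $V_{\{L\},\delta}$ (a generic $\PP^r$-bundle over the relative $\Pic^{k'+\eps}$ by Riemann--Roch, since $k'+\eps=g+r$), map it to $\Sym^{k'+\eps}(S)$, adjoin $k-k'$ points via $S^{[k'+\eps]}\times S^{[k-k']}\dashrightarrow S^{[k+\eps]}$, and cut with an Albanese fibre when $\eps=1$. Your dimension count, the class computation in (iii), the coisotropy/maximal-rational-quotient argument for (ii) and (iv) (the paper simply invokes \cite[Thm.~4.4]{AV} here), and the maximality argument for being a component of $\mathbb{S}_r$ (the paper cites \cite[Thm.~1.3]{voi}, which is the Mumford-type bound you rederive) all match.

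The one genuine omission is the step you yourself flag: the generic injectivity of $f:\Sym^{k'+\eps}(\widetilde{\mathcal{C}})\to\Sym^{k'+\eps}(S)$. You leave it as ``the main obstacle'' and attribute it to very-generality of $(S,L)$ and Brill--Noether non-degeneracy of the pencils, but neither is what is actually needed, and the step is much easier than you suggest. The paper's argument is a plain dimension count on $V:=V_{\{L\},\delta}$: the restriction of $f$ to each fibre $\Sym^{k'+\eps}(\widetilde{C})$ is clearly generically injective, so one only has to show that a general point of $\im f$, i.e.\ $k'+\eps$ general points of a general $C_0\in V$, lies on a unique member of $V$. Since $\dim V=g$ and passing through one further general point of $C_0$ cuts the locus $\{[C]\in V : x_1,\dots,x_j\in C\}$ by one at each stage (as $C_0$ is not a common component), $g+1$ general points of $C_0$ already single out $C_0$, and $k'+\eps=g+r\ge g+1$. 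You should also say a word about why the $k-k'$ adjoined general points cannot be traded against points of the divisor $D$ under a second decomposition of the same length-$(k+\eps)$ subscheme (the paper is terse here too, but generic finiteness of the concatenation map is what preserves the codimension $r$). With that inductive count supplied, your write-up closes up into the paper's proof.
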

Point (iv) positively answers, in the case of $S_{\eps}^{[k]}$, a question by Charles and Pacienza (cf. \cite[Question 1.2]{cp}) concerning existence of subvarieties of an IHS manifold whose maximal rational quotients have the minimal possible dimension. 

For $\eps=0$, examples of ($2k-r$)-dimensional components of $\mathbb{S}_r(S_{\eps}^{[k]})$ for any $r$ were already provided in \cite[\S 4.1 Ex. 1 and Lemma 4.3]{voi} by considering fibres of the Hilbert-Chow morphism $\mu_k: S^{[k]} \rightarrow \Sym^{k}(S)$. However, our components $W_{r,k'}$ are not contained in the exceptional locus of $\mu$ and thus provide much stronger evidence for Conjecture \ref{voisin}. 

In developing techniques towards proving the above theorems, we obtain some general results on IHS manifolds. First of all, in Proposition \ref{prop:qualisono} we provide a criterion to tell whether a deformation of $S^{[k]}_\eps$ is isomorphic to $S'^{[k]}_\eps$ for some surface $S'$. This appears to be related to ideas from \cite{add} and \cite{mw}. Secondly, we prove that wall divisors can be contracted under general assumptions:

\begin{thm} (cf. Theorem \ref{thm:mazzone})\label{mazzo}
Let $X$ be a projective IHS manifold and let $D$ be a wall divisor on $X$. Then one of the following holds:
\begin{itemize}
\item There exists a curve $R$ dual to $D$ that moves in a divisor and a birational map $X\dashrightarrow Y$ contracting $R$. Moreover $Y$ is singular symplectic.
\item For a general deformation $(X',D')$ of $(X,D)$, there is a birational map $X'\dashrightarrow X''$ with $X''$ IHS and a contraction $X''\rightarrow Y$ that contracts all curves dual to $D'$.
\end{itemize}
\end{thm}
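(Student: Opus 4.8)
The plan is to pass to a maximally generic deformation in order to read off the contraction geometry directly from the wall $D^{\perp}$, to split into a divisorial and a small case according to the geometry of the dual curve, and finally to transport each conclusion back to the manifold over which it is asserted.

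First I would exploit the deformation invariance of wall divisors (\cite{Mo}; see also \cite{AV}) to deform the pair $(X,D)$, inside the moduli of marked IHS manifolds, to a very general pair $(X_0,D_0)$ whose Picard group has the least rank compatible with $D_0$ staying of type $(1,1)$, that is $\Pic(X_0)=\ZZ\langle D_0\rangle$. Since $q(D_0)=q(D)<0$ for a wall divisor, $\Pic(X_0)$ carries no class of positive square, so $X_0$ is \emph{not} projective although it is \kahl; this is precisely where the non-projective nature of the criterion enters. Writing $R_0$ for the primitive curve class dual to $D_0$ under the Beauville--Bogomolov form, the results of \cite{Mo, AV, BHT} identify $R_0$ as an effective MBM class whose orthogonal $R_0^{\perp}=D_0^{\perp}$ is a genuine wall of the \kahl cone of $X_0$.

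Now let $Z_0\subset X_0$ be the locus swept out by the rational curves of class $R_0$; the dichotomy of the statement corresponds to whether or not $Z_0$ is a divisor. \textbf{Case A} ($Z_0$ a divisor): then $Z_0=:E_0$ is a prime exceptional divisor. By the theory of such divisors (Druel, Markman) it admits a divisorial contraction, and since $E_0$ is rigid, Ran's deformation of the covering family of curves \cite{ran} shows that the class $E$ and its sweeping curve $R$ persist on $X$ itself; contracting there yields a birational map $X\dashrightarrow Y$ contracting $R$. As the contraction is crepant, the symplectic form descends to $Y_{\mathrm{sm}}$ and extends across the singular locus (Namikawa), so $Y$ is singular symplectic, which is the first bullet. \textbf{Case B} ($\dim Z_0<\dim X-1$): here the wall governs a small, flopping-type contraction which need not be realizable on $X$, because $D^{\perp}$ may be an interior wall of the movable cone rather than a boundary wall of the nef cone. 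I would therefore replace $(X,D)$ by a general deformation $(X',D')$ and, using the wall-and-chamber description of the movable cone \cite{BHT} together with the Bayer--Macr\`i MMP \cite{bm, bm2}, flop $X'$ across the intervening walls to an IHS birational model $X''$ on which $R'^{\perp}$ supports a face of the nef cone. The extremal contraction $X''\to Y$ attached to this face then contracts exactly the curves dual to $D'$, and $Y$ is again singular symplectic; this is the second bullet.

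The main obstacle is the existence of the contractions themselves, an input that is birational-geometric rather than purely lattice-theoretic: in Case A one must know that the prime exceptional divisor is genuinely contractible and that $Y$ inherits symplectic singularities, while in Case B one must produce the projective model $X''$ on which the MBM wall becomes a nef boundary carrying an extremal contraction. The second delicate point is the asymmetric descent: the divisorial contraction survives specialization from $X_0$ to $X$ by rigidity of $E_0$, whereas in the small case the contraction truly requires both the passage to a general deformation and an auxiliary birational modification, which is exactly why the two alternatives are phrased over different manifolds.
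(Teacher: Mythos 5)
Your overall architecture (deform to a very general pair with Picard rank one, split according to whether the locus swept out by the dual curve is a divisor, then transport back) matches the paper's, and your Case A is essentially the paper's case (i): the effective divisor class specializes back to $X$, and since $X$ is projective one runs the MMP for the klt pair $(X,D)$ to produce the birational contraction onto a singular symplectic $Y$. (The paper obtains persistence of the divisor by semicontinuity of $\dim \mathrm{Locus}(\mathbb{R}^+[R''])$ and by deforming the effective class $bD''$ back to $bD$, rather than by rigidity plus Ran, but this is a cosmetic difference.)

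Case B, however, has a genuine gap, and it is exactly the point the theorem is designed to address. You observe yourself in your first paragraph that a very general deformation whose Picard group is generated by a class of negative square is \emph{not} projective; yet in Case B you propose to run the Bayer--Macr\`i MMP and invoke ``the extremal contraction attached to a face of the nef cone'' on precisely such a general deformation $(X',D')$. Neither tool is available there: the Bayer--Macr\`i machinery applies only to moduli of objects on a $K3$ or abelian surface, not to an arbitrary IHS manifold, and, more fundamentally, the Contraction Theorem (base-point-freeness of a supporting divisor of an extremal face) is a statement about projective varieties --- the paper notes explicitly that for general nonprojective deformations ``a proof of the contraction theorem was, as yet, unavailable.'' What is missing from your argument is the mechanism by which the contraction, once constructed on \emph{some} projective deformation, is propagated to the general (nonprojective) ones. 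The paper does this via Lemma \ref{lemma:cago}: it first produces the birational map $X''\dashrightarrow Z''$ from the wall-and-chamber structure of the positive cone (using that $X''$ carries no effective divisor, so $\overline{\mathcal{BK}}_{X''}=\overline{\mathcal{C}}_{X''}$), then deforms to a \emph{projective} $X'''$ where the curve class is effective and extremal and applies the Contraction Theorem there to obtain $X'''\rightarrow Y'''$; finally, Namikawa's smoothness of $\Def(Y)$ together with Kirchner's period map for locally trivial deformations of $Y$ (Theorems \ref{thm:nami} and \ref{thm:kirch}, Remark \ref{oss:loc_triv}) shows that every small locally trivial deformation $Y_t$ of $Y$ admits a symplectic resolution $Z_t\rightarrow Y_t$, that these $Z_t$ account for the general small deformations of $(Z,R)$, and, by comparing $b_2(Z_t)$ with $b_2(Y_t)$, that the morphism contracts exactly $\mathrm{Locus}(\mathbb{R}^+[R_t])$. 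Without this deformation-of-the-singular-target step your argument only establishes the second bullet for projective deformations, which is strictly weaker than the statement.
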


This result holds in particular for general nonprojective deformations of $(X,D)$, where a proof of the contraction theorem was, as yet, unavailable.

The paper is organized as follows. Section \ref{sec:due} contains background material concerning IHS manifolds and in particular varieties of the form $S^{[k]}_\eps$. In Section \ref{sec:tre} we recall known results on the birational geometry of IHS manifolds and use them in order to prove Theorem \ref{mazzo}. We then specialize to deformations of $S^{[k]}_\eps$ and prove that $-(k+3-2\eps)/2$ is a lower bound for the self-intersection of a primitive generator of an extremal ray of the Mori cone, cf. Proposition \ref{prop:bound}; the result is new for $\eps=1$, while it had already appeared in \cite{BHT,Mo} for $\eps=0$. 

Section \ref{sec:quattro} summarises the results from \cite{ck,klm} concerning the Brill-Noether theory of nodal curves on symplectic surfaces. Classes $R_{p,\delta,k}$ are computed. Proposition \ref{prop:andreas010715} proves the existence of a family of rational curves of class $R_{p,\delta,k}$ having the expected dimension and surviving in any small deformation of $S^{[k]}_\eps$ that keeps the class algebraic. 
In Section \ref{sec:cinque} we prove Theorem \ref{zero} and exhibit a collection of wall divisors that we later show to be essentially ``complete'' in Proposition \ref{prop:tutti_muri}. 

 Section \ref{sec:LM} proves several results concerning vector bundle techniques associated with nodal curves, which are essential in the proof of Theorem \ref{uno}. We  believe that these results are of independent interest, due to the recent activity in the study of nodal curves on $K3$ and abelian surfaces. In particular, Proposition \ref{prop:pareschi} extends a result by Pareschi \cite[Lemma 2]{P} to possibly nodal curves on symplectic surfaces; Proposition \ref{prop:importante} and Lemma \ref{h1} describe properties of general (semistable) sheaves lying in a specific component $\M$ of their moduli space.

The main results Theorems \ref{uno}, and \ref{due} are finally proved in Section \ref{sec:sei}.

\vspace{0.3cm}
\textbf{Note.} After this paper was completed, a paper by H.~Y.~Lin \cite{lin} appeared on the arXiv, where the author also constructs components of the locus $\mathbb{S}_r$ for generalised Kummer manifolds. Our constructions are different from the Lin's and the spirit of the two papers is quite distant.

\section*{Acknowledgements}
We are grateful to C.~Ciliberto and K.~O'Grady for interesting conversations on this topic. Moreover, we thank the Max Planck Institute for mathematics and the Hausdorff Center for Mathematics  in Bonn, the University of Bonn and the Universities of Roma La Sapienza, Roma Tor Vergata and Roma Tre, for hosting one or more of the authors at different times enabling this collaboration. The second named author was supported by the Centro di Ricerca Matematica Ennio De Giorgi in Pisa and the third named author by ``Firb 2012, Spazi di moduli ed applicazioni''.

\section{Generalities on IHS manifolds}\label{sec:due}

A compact K\"{a}hler manifold $X$ is called {\it hyperk{\"a}hler} or 
{\it irreducible holomorphic symplectic (IHS)} if it is simply connected and $H^0(\Omega_X^2)$ is generated by a symplectic form.

The symplectic form implies the existence of a canonical quadratic form $q(\,)$ on $H^2(X,\mathbb{Z})$, called the {\it Beauville-Bogomolov form}, and of a constant $c$, the {\it Fujiki constant}, such that for every $\alpha\in H^2(X,\mathbb{Z})$ one has:
\begin{equation}
q(\alpha)^n=c\cdot \alpha^{2n},
\end{equation}
where $\dim(X)=2n$. We will denote by $b(\,,\,)$ the bilinear form associated with $q$. This endows $H^2(X,\mathbb{Z})$ with the structure of a lattice of signature $(3,b_2(X)-3)$ and provides an embedding of $H_2(X,\mathbb{Z})$ in $H^2(X,\mathbb{Q})$ as the usual lattice embedding $\LL^\vee\hookrightarrow \LL\otimes\mathbb{Q}$. Fora any $D\in H^2(X,\mathbb{Z})$ denote by $\div(D)$ the positive generator of the ideal $b(D,H^2(X,\mathbb{Z}))$; then the elements $D/\div(D)$, with $D$ running among all primitive elements in $H^2(X,\mathbb{Z})$, generate $H_2(X,\mathbb{Z})$.
The quadratic form and the symplectic form also allow to define a period domain for IHS  manifolds, much as in the case of $K3$ surfaces, as follows. For any lattice $\LL$, one defines the period domain
\begin{equation}\nonumber
\Omega_{\LL}:=\{\omega\,\in\,\mathbb{P}(\LL\otimes\mathbb{C})\,|\,q(\omega)=0,\,b(\omega,\overline{\omega})>0\}.
\end{equation}
Any isometry $f: H^2(X,\mathbb{Z})\,\rightarrow \LL$ is called a marking and there is a natural map, the period map 
$\P$, sending a marked IHS  manifold $(X,f)$ to $\P(X,f):=[f(\sigma_X)]\in \Omega_{\LL}$, where $\sigma_X$ is any symplectic form on $X$. Let $\M_{\LL}$ be the moduli space of deformation equivalent marked IHS  manifolds with $H^2$ isometric to $\LL$. The period map $\P\,:\,\M_{\LL}\,\rightarrow \,\Omega_{\LL}$ is surjective \cite[Thm. 8.1]{H1} and it is a local isomorphism \cite[Thm. 5]{B}.

There are singular analogues of IHS  manifolds, called symplectic varieties. A normal variety $Y$ is {\it symplectic} if it has a unique (up to scalars) nondegenerate symplectic form on its smooth locus and a resolution of singularities $\pi: \widetilde{X} \to X$ such that the pullback of this form is everywhere defined, but possibly degenerate. If it is nondegenerate, then $\widetilde{X}$ is IHS and we say that $\pi$ is a {\it symplectic resolution}. Symplectic varieties share many properties with IHS manifolds, especially when they admit a symplectic resolution. In this case it is indeed possible to define a quadratic form on their second cohomology group and the following results hold.

\begin{thm} \label{thm:nami} 
{\rm (Namikawa \cite[Thm. 2.2]{nami})}
Let $\pi\,:\,\widetilde{X}\,\rightarrow\,X$ be a symplectic resolution of a projective symplectic variety $X$. Then the Kuranishi spaces $\Def(X)$ and $\Def(\widetilde{X})$ are both smooth and of the same dimension. There exists a natural map $\pi_{*}\,:\,\Def(\widetilde{X})\,\rightarrow\, \Def(X)$ which is a finite covering. Moreover, $X$ has a flat deformation to an IHS  manifold. Any smoothing of $X$ is an IHS  manifold obtained as a flat deformation of $\widetilde{X}$.
\end{thm}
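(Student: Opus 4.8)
The plan is to address the four assertions in sequence, the heart of the matter being the \emph{unobstructedness} of both Kuranishi families. Once $\Def(\widetilde{X})$ and $\Def(X)$ are known to be smooth of the same dimension, the comparison map and the smoothing statements follow by comparatively formal arguments.

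First I would show that $\Def(\widetilde{X})$ is smooth. As $\widetilde{X}$ is IHS, the symplectic form gives an isomorphism $T_{\widetilde{X}} \cong \Omega^1_{\widetilde{X}}$, whence the first-order deformation space $H^1(T_{\widetilde{X}}) \cong H^1(\Omega^1_{\widetilde{X}})$ has dimension $h^{1,1}(\widetilde{X})$. Unobstructedness is the Bogomolov--Tian--Todorov theorem, which I would establish through the $T^1$-lifting criterion of Ran and Kawamata; its verification rests on the degeneration of the Hodge--de Rham spectral sequence of $\widetilde{X}$ together with triviality of the canonical bundle. This yields $\Def(\widetilde{X})$ smooth of dimension $h^{1,1}(\widetilde{X})$.

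The main obstacle is the corresponding statement for the singular variety $X$. Here I would exploit that symplectic singularities are rational and Gorenstein with trivial canonical sheaf, so that $X$ is homologically Calabi--Yau; in particular $R^j\pi_*\O_{\widetilde{X}}=0$ for $j>0$, and the reflexive differentials on $X$ coincide with $\pi_*\Omega^\bullet_{\widetilde{X}}$ by the two-form extension theorem. The aim is again the $T^1$-lifting property, but computed on $X$: replacing the holomorphic de Rham complex by its reflexive analogue, I would prove a Hodge--de Rham degeneration for $X$ and use it, as in the smooth case, to annihilate the obstruction maps. Carrying out this Hodge-theoretic degeneration on the singular space, and thereby identifying $\dim\Def(X)$ with $h^{1,1}(\widetilde{X})$, is the technically delicate step.

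Granting both smoothness results, I would build $\pi_*$ from the deformation of the contraction $\pi$ in families. Over the sublocus of $\Def(\widetilde{X})$ where the semiample class defining $\pi$ remains algebraic, the contraction persists and blows the semiuniversal family $\widetilde{\mathcal{X}}\to\Def(\widetilde{X})$ down to a family of singular deformations of $X$; over the complementary locus the exceptional curves fail to survive and the fibre is already smooth. Assembling these fibrewise models into a single flat family $\mathcal{X}\to\Def(\widetilde{X})$ and invoking the universal property of $\Def(X)$ produces the classifying map $\pi_*$. Since both spaces are smooth of equal dimension, $\pi_*$ is a finite covering once it is dominant and quasi-finite; quasi-finiteness holds because, up to the Weyl-group action on the exceptional configuration (Brieskorn), a deformation of $X$ has only finitely many simultaneous resolutions. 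The same picture delivers the final two assertions: the locus in $\Def(\widetilde{X})$ over which the fibre of $\mathcal{X}$ is singular is a proper closed analytic subset, so a general fibre is a smoothing of $X$, necessarily a compact K\"{a}hler manifold with trivial canonical class carrying a nondegenerate holomorphic two-form, hence IHS; and every such smoothing coincides by construction with a fibre of $\widetilde{\mathcal{X}}$, that is, with a flat deformation of $\widetilde{X}$.
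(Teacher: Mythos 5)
The paper does not prove this statement: Theorem \ref{thm:nami} is quoted verbatim from Namikawa \cite[Thm.\ 2.2]{nami} and used as a black box, so there is no internal proof to compare with; I am therefore measuring your sketch against Namikawa's actual argument. Your treatment of $\Def(\widetilde{X})$ (Bogomolov--Tian--Todorov via $T^1$-lifting) and the formal endgame (a dominant quasi-finite map of smooth germs of equal dimension is a finite covering; surjectivity of $\pi_*$ then forces every smoothing of $X$ to be a deformation of $\widetilde{X}$, hence IHS) are fine. The problems are concentrated exactly where you admit the difficulty lies, and they are not merely technical.

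First, the smoothness of $\Def(X)$. You reduce it to a ``Hodge--de Rham degeneration for the reflexive de Rham complex of $X$,'' which you do not prove and which is not a standard fact: even the input you take for granted, namely $\pi_*\Omega^p_{\widetilde{X}}\cong\Omega^{[p]}_X$ for \emph{all} $p$ (not just the $2$-form, which is part of the definition of a symplectic variety), is the Greb--Kebekus--Kov\'acs--Peternell extension theorem, proved a decade after Namikawa's paper, and the degeneration of the resulting spectral sequence on a singular space is a deep, much more recent result that does not follow ``as in the smooth case.'' Since this is the central assertion of the theorem, the proposal as written assumes what is to be proved. Namikawa's route is different: he verifies the $T^1$-lifting property for $X$ by comparing its deformations with those of $\widetilde{X}$ through the resolution, using the vanishings $R^i\pi_*\O_{\widetilde{X}}=0$ coming from rational singularities, rather than by doing intrinsic Hodge theory on $X$. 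Second, your construction of $\pi_*$ is not well posed: you define it stratum by stratum on $\Def(\widetilde{X})$ (contract where the semiample class stays algebraic, declare the fibre ``already smooth'' elsewhere) and then ``assemble,'' but this does not visibly produce a flat family over the germ, nor an analytic morphism of Kuranishi spaces, and on the generic stratum you still owe an identification of the smooth fibre with a deformation of the \emph{singular} $X$. The correct construction is the blow-down morphism of deformation functors, which exists on all of $\Def(\widetilde{X})$ at once precisely because $R^1\pi_*\O_{\widetilde{X}}=0$; this is what makes $\pi_*$ a single analytic map to which the finiteness argument can be applied. Relatedly, quasi-finiteness via Brieskorn's simultaneous resolution only addresses the ADE singularities in codimension two, not the deeper strata of $\Sing X$, so the finiteness of $\pi_*$ also needs a global argument.
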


\begin{thm}\label{thm:kirch}
{\rm (Kirchner)} Let $X$ be a normal symplectic variety admitting a symplectic resolution of singularities and such that $\codim(\Sing X)\geq 4$. Let $\Def(X)_{lt}$ denote the Kuranishi space of locally trivial deformations of $X$. Then there is a well defined period map $\P: \Def(X)_{lt}\,\rightarrow\,\Omega_{\LL}$, where $\LL\simeq H^2(X,\mathbb{Z})$, having injective tangent map. 
\end{thm}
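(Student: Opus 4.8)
The plan is to imitate Beauville's proof of the local Torelli theorem for smooth IHS manifolds, carrying out each step on the possibly singular $X$ and invoking $\codim(\Sing X)\geq 4$ exactly where smoothness was used. Fix a symplectic resolution $\pi\colon\widetilde{X}\to X$ as in Theorem \ref{thm:nami}. Since symplectic resolutions are semismall, the bound $\codim(\Sing X)\geq 4$ forces the exceptional locus of $\pi$ to have codimension at least two in $\widetilde{X}$, so $\pi$ contracts no divisor; by the theory of symplectic resolutions (Namikawa, Kaledin) the pullback $\pi^*\colon H^2(X,\QQ)\to H^2(\widetilde{X},\QQ)$ is then an isomorphism of weight-two Hodge structures carrying the Beauville-Bogomolov form of $\widetilde{X}$ onto $H^2(X)$. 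This equips $H^2(X,\ZZ)\simeq\LL$ with a pure weight-two Hodge structure whose $(2,0)$-part is spanned by the reflexive extension of the symplectic form $\sigma_X\in H^0(X_{\mathrm{reg}},\Omega^2)$, i.e. $H^{2,0}(X)=H^0(X,\Omega_X^{[2]})=\CC\,\sigma_X$. For a locally trivial family $f\colon\X\to B$ the sheaf $R^2f_*\ZZ$ is a local system and $f_*\Omega_{\X/B}^{[2]}$ is a line subbundle spanning the moving $(2,0)$-line, so the period point $[\sigma_X]\in\Omega_\LL$ varies holomorphically and $\P\colon\Def(X)_{lt}\to\Omega_\LL$ is well defined.

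First I would identify the tangent space. Locally trivial deformations are controlled by the tangent sheaf $T_X$, so $T_{[X]}\Def(X)_{lt}\simeq H^1(X,T_X)$. Contraction with the reflexive symplectic form defines a sheaf homomorphism $T_X\to\Omega_X^{[1]}$ that is an isomorphism over $X_{\mathrm{reg}}$; as both sheaves are reflexive and $\codim(\Sing X)\geq 2$, this extends to an isomorphism $T_X\simeq\Omega_X^{[1]}$, whence $H^1(X,T_X)\simeq H^1(X,\Omega_X^{[1]})$. Using $\pi^*$ once more, $H^1(X,\Omega_X^{[1]})$ is identified with $H^{1,1}(\widetilde{X})$, which is precisely the tangent space to $\Omega_\LL$ at $[\sigma_X]$: the tangent space to the quadric $\Omega_\LL$ at a point $[\sigma]$ with $H^{2,0}=\CC\sigma$ is canonically $\Hom(H^{2,0},H^{1,1})\simeq H^{1,1}$, since $\sigma^{\perp}=\CC\sigma\oplus H^{1,1}$. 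In particular the dimensions match: $\dim\Def(X)_{lt}=h^1(X,T_X)=h^{1,1}=b_2(X)-2=\dim\Omega_\LL$.

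Finally I would compute $d\P$ and deduce injectivity. By Griffiths transversality the differential sends a Kodaira-Spencer class $\theta\in H^1(X,T_X)$ to the map $\sigma_X\mapsto\theta\lrcorner\sigma_X\in H^{1,1}=H^1(X,\Omega_X^{[1]})$ inside $\Hom(H^{2,0},H^{1,1})\simeq T_{[\sigma_X]}\Omega_\LL$. Under the identification $T_X\simeq\Omega_X^{[1]}$ established above, $\theta\mapsto\theta\lrcorner\sigma_X$ is exactly that isomorphism on $H^1$; hence $d\P$ is injective, indeed an isomorphism onto the tangent space of the period domain. The hard part is the Hodge-theoretic input: one must show that $H^2(X,\CC)$ carries a pure weight-two Hodge structure, that its $(p,2-p)$-graded pieces are computed by the reflexive cohomology $H^{2-p}(X,\Omega_X^{[p]})$ (so that the target of $d\P$ really is $H^1(X,\Omega_X^{[1]})$), and that this structure extends to a variation over the locally trivial family. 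This is exactly where $\codim(\Sing X)\geq 4$ is indispensable, as it guarantees purity (no weight drop coming from the singular locus) and the smallness of $\pi$ underlying the identifications above; once these facts are in place, via Namikawa's extension theorem for reflexive forms, the differential computation becomes formally identical to the smooth case.
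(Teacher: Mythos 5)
There is a genuine error in the first paragraph of your argument: the claim that, because $\pi$ contracts no divisor, the pullback $\pi^*\colon H^2(X,\QQ)\to H^2(\widetilde{X},\QQ)$ is an isomorphism. This is false for small contractions. For instance, if $\pi$ contracts a Lagrangian $\PP^2\subset\widetilde{X}$ in a symplectic fourfold to a point (the Mukai contraction), the decomposition theorem gives $H^2(\widetilde{X},\QQ)\cong H^2(X,\QQ)\oplus\QQ$, so $b_2(\widetilde{X})=b_2(X)+1$ even though $\codim(\Sing X)=4$ and no divisor is contracted. The paper itself relies on this discrepancy: Remark \ref{oss:loc_triv} identifies $H^{1,1}(X)$ with the proper subspace $H^{1,1}(\widetilde{X})\cap\langle R_1,\dots,R_i\rangle^{\perp}$, where the $R_j$ are the contracted curve classes, and Lemma \ref{lemma:cago} uses $b_2(Z)=b_2(Y)+1$. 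The correct statement is that $\pi^*$ is injective with image the orthogonal complement of the contracted classes; this still endows $H^2(X,\ZZ)$ with a pure weight-two Hodge structure and a quadratic form, so your conclusion survives, but the justification as written does not, and your dimension count $\dim\Omega_{\LL}=b_2(X)-2$ must be run with $H^{1,1}(X)$ rather than $H^{1,1}(\widetilde{X})$.

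Beyond this, your final paragraph concedes that the genuinely hard input --- purity of the Hodge structure on $H^2(X,\CC)$, the identification of $H^{1,1}(X)$ with $H^1(X\setminus\Sing X,\Omega^1)$ (equivalently with reflexive cohomology, which requires a depth argument and is exactly where $\codim(\Sing X)\geq 4$ rather than $\geq 3$ enters), and the extension of this structure to a variation over a locally trivial family --- is left unproved. That input is precisely what the paper imports from Kirchner's thesis (Cor.~3.4.2 for the identification of first-order locally trivial deformations with $H^1(X\setminus\Sing X,\Omega^1_X)\cong H^{1,1}(X)$, and Thm.~3.4.4 for the period map with injective tangent map), after using Theorem \ref{thm:nami} to arrange smoothness of $\Def(X)_{lt}$ by a small locally trivial perturbation --- a point your proposal also skips when it silently identifies $T_{[X]}\Def(X)_{lt}$ with $H^1(X,T_X)$. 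So the proposal reduces to the same external results the paper cites while adding an incorrect intermediate step; to make it a proof you would need to either repair the $\pi^*$ claim and actually establish the Hodge-theoretic facts, or simply cite Kirchner as the paper does.
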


\begin{proof}
Locally trivial deformations are parametrized by a locally closed subset of $\Def(X)$. The latter is smooth by Theorem \ref{thm:nami}. After replacing $X$ with a small  locally trivial deformation, we can suppose that $\Def(X)_{lt}$ is smooth, therefore 
\cite[Cor. 3.4.2]{kirch} applies and first order locally trivial deformations are parametrised by $H^1(X-\Sing X,\Omega^1_X)\cong H^{1,1}(X)$. Now \cite[Thm. 3.4.4]{kirch} provides the period map as stated above.
\end{proof}

\begin{remark}\label{oss:loc_triv}
Keep notation as above and let $R_1,\dots,R_i$ be the curve classes that span the classes of curves contracted by the resolution of singularities $\widetilde{X}\rightarrow X$. The above theorem implies that first order locally trivial deformations of $X$ are parametrised by $H^{1,1}(\widetilde{X})\cap \langle R_1,\dots,R_i \rangle^\perp\cong H^{1,1}(X)$.
\end{remark}

Very few examples of IHS  manifolds are known. The present paper will focus on the two infinite families of examples introduced by Beauville \cite{B}, namely, {\it Hilbert schemes of points on $K3$ surfaces} and {\it generalised Kummer manifolds}.
Let $S$ be a $K3$ or abelian surface. Throughout the paper we will let
\begin{equation} \label{eq:defeps}
 \eps=\eps_S:=
\begin{cases}  
1 & \;\mbox{if $S$ is abelian}, \\
0 & \;\mbox{if $S$ is $K3$}.
\end{cases} 
\end{equation}
It was proved by Beauville \cite{B} that the Hilbert scheme  $S^{[k+\eps]}$
of $0$-dimensional  subschemes of $S$ of length $k+\eps$ , where $k \geq 2$, inherits a symplectic form from $S$ and is smooth.  When $S$ is $K3$, it is simply connected and thus an IHS manifold of dimension $2k$. When $S$ is abelian, $S^{[k+1]}$ is not simply connected, but  any fibre of the Albanese map $\Sigma_k: S^{[k+1]}\rightarrow \Alb S^{[k+1]} \cong S$ is a $2k$-dimensional  IHS  manifold $K^k(S)$, which is called a generalised Kummer manifold. We recall that $\Sigma_k$ is the composition of the Hilbert-Chow morphism $\mu_k: S^{[k+\eps]} \rightarrow \Sym^{k+\eps}(S)$ and the summation map $+: \Sym^{k+\eps}(S) \rightarrow S$. 

In order to handle the two families simultaneously, we set
\begin{equation}
\label{eq:defs}
 S_{\eps}^{[k]}:=
\begin{cases}  
 K^k(S)& \;\mbox{if $\eps=1$ (i.e., $S$ is abelian)}, \\
 S^{[k]}& \;\mbox{if $\eps=0$ (i.e., $S$ is $K3$)}.
\end{cases} 
\end{equation}
Note that $\dim S_{\eps}^{[k]}=2k$ in both cases, even though $S_{1}^{[k]} \subset S^{[k+1]}$. By abuse of notation, in the latter case we will still use the same symbol $\mu_k$ and the same name for the restriction of the Hilbert-Chow morphism to $S_{1}^{[k]}$.

There are natural embeddings
\begin{align}\label{k_immersion}
\NS (S)& \hookrightarrow \Pic (S_{\eps}^{[k]}),\\
\label{imm2}N_1(S)& \hookrightarrow N_1(S_{\eps}^{[k]}).
\end{align}
The former is given by associating with the class of a prime divisor $D$ in $S$ the divisor
\begin{equation}\label{imm3}
\{Z \subset S_{\eps}^{[k]}\,|\, \Supp(Z) \cap D \neq \emptyset\}
\end{equation}
and the latter is given by fixing  a set of general points $\{x_1,\ldots, x_{k+\eps-1}\}\subset S$ and associating with the class of an effective curve $C\subset S$ the class of the curve
\[
\{Z\subset S_{\eps}^{[k]} \,|\, \Supp(Z)\cap C \neq \emptyset,\,\,\{x_1,\ldots, x_{k+\eps-1}\}\subset \Supp(Z)\}.
\]
The exceptional divisor $\Delta_k$ of the Hilbert-Chow morphism $\mu_k$ has class $2\mathfrak{e}_k$ and
one has an orthogonal decomposition with respect to $b(\,,\,)$:
\begin{equation*}
H^2(S_{\eps}^{[k]},\ZZ) \cong H^2(S,\ZZ) \+_{\perp} \ZZ[\mathfrak{e}_k],
\end{equation*}
such that $b(\,,\,)$ restricts to the usual cup product on $S$ and  $q(\mathfrak{e}_k)=-2(k-1+2\eps)$. The above isometry restricts to the embedding \eqref{k_immersion} on the algebraic part, whence 
\begin{equation}
  \label{eq:pic}
 \Pic (S_{\eps}^{[k]}) \cong \NS(S) \+ \ZZ[\mathfrak{e}_k].
\end{equation}
Under the embedding $H_2(S_{\eps}^{[k]},\ZZ) \hookrightarrow H^2(S_{\eps}^{[k]},\QQ)$ given by lattice duality, $H_2(S_{\eps}^{[k]},\ZZ)$ is generated by $H^2(S,\ZZ)$ and $\mathfrak{r}_k:=\mathfrak{e}_k/2(k-1+2\eps)$. Here $\mathfrak{r}_k$ is the class of a  general rational curve lying in the exceptional divisor $\Delta_k$ of the Hilbert-Chow morphism,  that is, $\mathfrak{r}_k$ is the inverse image under $\mu_k$ of a cycle in $\Sym^{k+\eps}(S)$ supported at precisely $k-1+\eps$ points. Hence, $\div(\mathfrak{e}_k)=2(k-1+2\eps)$ and
\begin{equation}
  \label{eq:N1}
 N_1 (S_{\eps}^{[k]}) \cong N_1(S) \+ \ZZ[\mathfrak{r}_k]. 
\end{equation}

Any smooth K{\"a}hler deformation of $S_{\eps}^{[k]}$ is called a  manifold {\it of Kummer type} if $\eps=1$ and {\it of $K3^{[k]}$ type} if $\eps=0$.

\begin{remark} \label{rem:hodge}
The manifold $S_{\eps}^{[k]}$ can also  be defined by means of moduli spaces of stable sheaves on the underlying surface. There is a natural map $\mathcal{C}oh(S)\,\rightarrow\,H^{2*}(S,\mathbb{Z})$ sending a sheaf $\F$ to its {\it Mukai vector} 
\begin{equation}
  \label{eq:mukaivector}
v(\F):=  \mathrm{ch}(\F)\sqrt{\mathrm{td}(S)}= (\rk \F, c_1(\F), \chi(\F)+(\eps-1)\rk \F).
\end{equation}
 In order to construct a moduli space of sheaves, one needs also a choice of a polarization $L$ and, for most choices of $v$ (see \cite[Thm. 0.1]{yo_unpub}), a general ample $L$ gives a smooth moduli space $\M(v)$ of Gieseker $L$-semistable torsion free sheaves with Mukai vector $v$. Moreover, the fibre of $\M(v)$ under the Albanese map is deformation equivalent to $S_{\eps}^{[k]}$.
 
If $v:=(1,0,1-2\eps-k)$, every element $[\F]\in\M(v)$ can be written as $\F=H_0\otimes I_Z$ with $H_0\in\Pic^0(S)$ and $[Z] \in S^{[k+\eps]}$. Hence, one has $\M(v) \cong S^{[k]}$ in the $K3$ case, while in the abelian case $K^k(S)$ is the fibre over $0$ of the Albanese map of $\M(v)$,  cf. \cite[Thm. 0.1]{yo1}. 

For $k \geq 2$, we have a canonical Hodge isometry 
\[ 
H^{2}(S_{\eps}^{[k]},\mathbb{Z}) \cong H^2(S,\ZZ) \+_{\perp} \ZZ[\mathfrak{e}_k] \cong v^{\perp} \subset H^{2*}(S,\mathbb{Z})=\Lambda:=U^{\oplus 4}\oplus E_8(-1)^{\oplus2-2\eps},
\]
 such that $\mathfrak{e}_k$ is sent to $(1,0, k-1+2\eps)$ and the second cohomology of $S$ is sent back to itself,
cf. \cite[Thm. 0.2]{yo1}. 
In particular, one has
\begin{equation}\label{ercapitano}
\frac{v+\mathfrak{e}_k}{2}\in \Lambda \; \; \mbox{and} \; \; \frac{v-\mathfrak{e}_k}{2(k-1+2\eps)}\in \Lambda.
\end{equation}
\end{remark}

\section{Birational geometry and wall divisors of IHS  manifolds}\label{sec:tre}
Having trivial canonical bundle, IHS  manifolds are minimal in the sense of MMP. Therefore, maps between IHS  manifolds are rather rigid, as the following shows:

\begin{prop}
Let $X$ and $X'$ be two IHS  manifolds and let $f\,:\,X\dashrightarrow\,X'$ be a birational map. Then the following hold:
\begin{itemize}
\item[(i)] The manifolds $X$ and $X'$ are deformation equivalent and $H^2(X,\mathbb{Z})\cong H^2(X',\mathbb{Z})$ as Hodge structures.
\item[(ii)] The map $f$ has indeterminacy locus of codimension at least $2$.
\item[(iii)] If $X$ is projective, there exists a klt divisor $D$ such that the map $f$ is a sequence of flips obtained by running the minimal model program for the pair $(X,D)$.
\end{itemize}
\end{prop}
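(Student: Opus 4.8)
The plan is to establish the three assertions in order, since each is a relatively standard consequence of the fact that IHS manifolds have trivial canonical bundle and are therefore minimal in the sense of MMP. All three statements concern the behaviour of a birational map $f\colon X\dashrightarrow X'$ between IHS manifolds, and the central tool throughout is that such a map is an \emph{isomorphism in codimension one}, which is the real content underlying parts (i) and (ii).

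\textbf{Part (ii).} I would prove this first, as it is the geometric heart of the proposition. Since $X$ and $X'$ both have trivial canonical class (being holomorphic symplectic), any birational map between them cannot contract a divisor: a divisorial contraction would introduce a discrepancy incompatible with $K_X\equiv 0\equiv K_{X'}$, or equivalently the pullback of the symplectic form $\sigma_{X'}$ would extend to a nondegenerate two-form on $X$ forcing the exceptional locus to have codimension at least two. Concretely, I would take a resolution $p\colon Z\to X$, $p'\colon Z\to X'$ of the graph of $f$ and compare the pullbacks $p^{*}K_X$ and $p'^{*}K_{X'}$; triviality of both canonical bundles forces the discrepancies of all $p$- and $p'$-exceptional divisors to vanish, so no divisor on $X$ can be contracted by $f$ and vice versa. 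Hence the indeterminacy locus of $f$ has codimension at least $2$.

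\textbf{Part (i).} Given (ii), the map $f$ is an isomorphism in codimension one, so it induces an isomorphism on the level of divisor classes and, via the identification of $H^2$ with a lattice carrying the Beauville--Bogomolov form, an isometry $f_{*}\colon H^2(X,\mathbb{Z})\to H^2(X',\mathbb{Z})$ which is moreover a morphism of Hodge structures (it matches the periods $[\sigma_X]$ and $[\sigma_{X'}]$, since $\sigma_X$ is the unique symplectic form up to scalar). For deformation equivalence I would appeal to the surjectivity and local-isomorphism properties of the period map recalled in the excerpt (after \cite[Thm.~8.1]{H1} and \cite[Thm.~5]{B}): birational IHS manifolds sit over the same period point up to the Hodge isometry just constructed, and Huybrechts' theorem that birational IHS manifolds are deformation equivalent then applies. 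The main subtlety here is checking that the codimension-one isomorphism really yields a \emph{Hodge} isometry and not merely a lattice isomorphism, but this follows because $f_{*}$ sends the class of the symplectic form to a nonzero multiple of the symplectic form on $X'$.

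\textbf{Part (iii).} Assuming $X$ projective, the strategy is to realize $f$ as an output of the MMP for a suitably chosen klt pair. Because $K_X\equiv 0$, the map $f$ is not $K_X$-MMP directly; instead one chooses an effective $\mathbb{Q}$-divisor $D$ with $(X,D)$ klt such that $f$ becomes the relevant minimal model program for $(X,D)$, using that $f$ is an isomorphism in codimension one (part (ii)) so that it factors as a composition of flips rather than divisorial contractions. I would invoke the standard result (as in the work on MMP for such pairs, and the termination/existence results available in the projective IHS setting) that any isomorphism in codimension one between minimal models arises as a sequence of flops, which can be run as the $(X,D)$-MMP for an appropriate boundary. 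The hard part of the whole proposition lies here: producing the divisor $D$ making $(X,D)$ klt and arranging that the MMP for this pair reproduces exactly the given $f$, which requires the full machinery of the minimal model program together with the observation from (ii) that only flipping (small) modifications occur.
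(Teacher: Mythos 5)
Parts (i) and (ii) of your proposal are fine in substance. The paper disposes of both by citation (\cite[Thm.~4.6]{H1} for (i) and \cite[Rem.~4.4]{H1} for (ii)), and your graph-resolution/discrepancy argument for (ii) is essentially the standard proof of the cited remark. One caveat on (i): divisor classes do not span all of $H^2(X,\mathbb{Z})$ (there is a transcendental part), so ``$f$ induces an isomorphism on divisor classes, hence an isometry of $H^2$'' is not literally an argument; the isomorphism on the full $H^2$ comes from the correspondence $p'_*p^*$ through the closure of the graph. Moreover, your treatment of deformation equivalence is circular as written: you invoke ``Huybrechts' theorem that birational IHS manifolds are deformation equivalent,'' which is exactly assertion (i), so the preceding period-map discussion does no work. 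Citing \cite[Thm.~4.6]{H1} outright, as the paper does, is the honest move here.

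The genuine gap is in (iii). The statement asserts the \emph{existence} of a klt divisor $D$ for which $f$ is the $(X,D)$-MMP, and your proof says ``one chooses an effective $\mathbb{Q}$-divisor $D$ with $(X,D)$ klt such that $f$ becomes the relevant minimal model program'' and that the flop decomposition ``can be run as the $(X,D)$-MMP for an appropriate boundary'' --- this assumes precisely what must be produced, and you acknowledge as much (``the hard part \ldots lies here''). The construction is short and must be supplied: take $A$ ample on $X'$ and set $D=\epsilon f^{*}A$ for $0<\epsilon\ll 1$, which makes sense because, by (ii), $f$ is an isomorphism in codimension one and hence well defined on divisors. The pair $(X,D)$ is klt because any sufficiently small multiple of an effective divisor on an IHS manifold is klt (\cite[Rem.~12]{HT2}) --- a specific input your sketch does not mention. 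Since $A$ is ample and $f$ is well defined on divisors, $D$ is positive on every curve class whose locus is a divisor, so running the MMP for $(X,D)$ produces no divisorial contraction, only flips; and since $f_{*}D=\epsilon A$ is ample, $(X',A)$ is a (log) minimal model for $(X,D)$, so the program terminates at $X'$ (termination being \cite[Thm.~4.1]{LP}). Without this, or an equivalent explicit choice of $D$, assertion (iii) is not proved.
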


\begin{proof}
Item (i) is the content of \cite[Thm. 4.6]{H1}, and (ii) is proved in \cite[Rem. 4.4]{H1} and holds true for all manifolds with nef canonical divisor. For (iii), any (sufficiently small) multiple of an effective divisor on a IHS  manifold is klt (see \cite[Rem. 12]{HT2}). Therefore, if we take an ample divisor $A$ on $X'$ and set $D=\epsilon f^{*}(A)$, for $\epsilon<<1$, we have a klt pair $(X,D)$. As $A$ is ample and $f$ is well defined on divisors, $D$ is positive on all curves $C$ such that $\mathrm{Locus}(\mathbb{R}^{+}[C])$\footnote{We recall that the {\it locus} of  $V\subset N_1(X)$  is the closure of the locus  in $X$ covered by curves of class lying in $ V$, that is, $\mathrm{Locus}(V):=\overline{\{x\in \Gamma\subset X\,:\,[\Gamma]\in V\}}$.} is a divisor. Therefore, by running the MMP for $(X,D)$ we do not encounter any divisorial contraction. As $f_{*}D$ is ample, $(X',A)$ is a minimal model for $(X,D)$. 
\end{proof}

We refer to \cite[Thm. 4.1]{LP} for the termination of the log-MMP for IHS manifolds.

Being well-defined on divisors, any  birational map between two IHS manifolds induces a pullback map between their second cohomology groups. This allows  to define a birational invariant called the {\it birational K\"{a}hler cone} of an IHS manifold $X$. We recall that the {\it positive cone} $\C_X$ is the connected component containing a K\"{a}hler class of the cone of positive classes inside $H^{1,1}(X,\mathbb{R})$. It contains the {\it K\"{a}hler cone} $\K_X$, which is the cone 
containing all K\"{a}hler classes. The birational K\"{a}hler cone $\mathcal{BK}_X$ is the union $\cup f^{-1} \mathcal{K}_{X'}$, where $f$ runs through all birational maps between $X$ and any IHS  manifold $X'$. If $X$ is projective, then the closure of the algebraic part of the birational K\"{a}hler cone is just the movable cone, that is, the closure of the cone of divisors whose linear systems have no divisorial base components.

We recall that an isomorphism $H^2(X,\mathbb{Z})\stackrel{\simeq}{\longrightarrow} H^2(Y,\mathbb{Z})$, where $X$ and $Y$ are two IHS  manifolds, is called a parallel transport operator if it is induced by the parallel transport in the local system $R^2\pi_*\ZZ$  along a path of smooth deformations $\pi\,:\,\mathcal{X}\rightarrow \DD$ over a disc $\DD$ such that $X$ and $Y$ are two fibres.  The group of parallel self-operators is called the monodromy group and denoted $\Mon^2(X)$.

\begin{defn} {\rm (\cite[Def. 1.2]{Mo})}\label{def:wall}
Let $X$ be an IHS  manifold and let $D$ be a divisor on $X$. Then $D$ is called a wall divisor if $q(D)<0$ and $f(D)^\perp\cap\mathcal{BK}_X=\emptyset$ for all Hodge isometries $f\in \Mon^2(X)$. The set of wall divisors on $X$ is denoted by $W_X$. 
\end{defn}

The ample cone is one of the connected components of $\mathcal{C}_X-\cup_{D\in W_X}D^\perp$. 

Wall divisors are closely related to extremal rays of the Mori cone, as was analised independently in \cite{BHT} and \cite{Mo}. In particular, dual divisors to generators of rational extremal rays of negative square are wall divisors by \cite[Lemma 1.4]{Mo}. Notice that the extremal rays needed to determine the \kahl cone are indeed rational since the part of the Mori cone of curves of negative square is locally a finite rational polyhedron \cite[Cor. 18]{HT2}. The analogy runs deeper:

\begin{prop}\label{prop:wall_vs_mbm} 
Let $D$ be a divisor and let $R$ be the primitive class $D/\div(D)\in H_2(X,\mathbb{Z})\subset H^2(X,\mathbb{Q})$. Then $D$ is a wall divisor if and only if there exists a Hodge isometry $f\in \Mon^2(X)$ such that $f(R)$ generates an extremal ray of the Mori cone on some IHS  manifold $X'$ birational to $X$.
\end{prop}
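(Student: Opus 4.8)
The plan is to read the statement as the identification of the wall-and-chamber decomposition of the positive cone with the Kähler cones of the birational models, and to prove the two implications separately. Throughout I take $X$ projective (so that the Mori cone of any model $X'$, which is then automatically projective, is defined), and I freely use the parallel-transport Hodge isometry $H^2(X,\ZZ)\cong H^2(X',\ZZ)$ attached to a birational map $X\dashrightarrow X'$, under which $\mathcal{BK}_X$ and $\Mon^2(X)$ are carried to $\mathcal{BK}_{X'}$ and $\Mon^2(X')$. The single deep input is the structural theorem — a consequence of the global Torelli theorem \cite{H1} as exploited in \cite{Mo,BHT} — that the $\Mon^2(X)$-orbit of the closed birational Kähler cone $\overline{\mathcal{BK}_X}$ covers $\overline{\mathcal{C}_X}$; equivalently, every chamber of the decomposition of $\mathcal{C}_X$ cut out by $\{D'^\perp:D'\in W_X\}$ has the form $f(\mathcal{K}_{X'})$ for some $f\in\Mon^2(X)$ and some IHS manifold $X'$ birational to $X$, with $\mathcal{K}_{X'}$ viewed inside $H^{1,1}(X,\RR)$ via the birational identification. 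This subsumes the statement quoted in the excerpt that $\mathcal{K}_X$ is one connected component of $\mathcal{C}_X-\cup_{D'\in W_X}D'^\perp$.

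For $(\Leftarrow)$, suppose $f\in\Mon^2(X)$ and $f(R)$ generates an extremal ray of $\overline{\NE}(X')$. Since the Mori cone contains the (self-dual) positive cone, any class of positive square is interior to $\overline{\NE}(X')$ and cannot be extremal, so $q(R)\leq 0$; the residual isotropic case $q(R)=0$ corresponds to a Lagrangian-fibration ray, whose dual is not a wall divisor, so the content is the case $q(R)<0$, consistent with $q(D)<0$ being part of the wall-divisor condition. Then the dual divisor $f(D)$, viewed on $X'$, satisfies $q(f(D))=q(D)<0$ and generates a negative rational extremal ray, so by \cite[Lemma 1.4]{Mo} it is a wall divisor on $X'$, hence, transporting along the birational identification, on $X$ as well. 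Finally $W_X$ is stable under $\Mon^2(X)$: if $W$ is a wall divisor and $g\in\Mon^2(X)$ then $q(g(W))=q(W)<0$ and $h(g(W))^\perp\cap\mathcal{BK}_X=(hg)(W)^\perp\cap\mathcal{BK}_X=\emptyset$ for all $h\in\Mon^2(X)$. Applying this with $g=f^{-1}$ to the wall divisor $f(D)$ shows that $D$ is a wall divisor on $X$.

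For $(\Rightarrow)$, let $D$ be a wall divisor. By \cite[Cor. 18]{HT2} the arrangement $\{D'^\perp:D'\in W_X\}$ is locally finite in $\mathcal{C}_X$, so there is a point of $D^\perp\cap\mathcal{C}_X$ lying on no other wall; a neighbourhood of it meets exactly two chambers separated by $D^\perp$, and $D^\perp$ is a facet of the closure $\overline{\mathcal{K}'}$ of either of them. By the structural theorem $\mathcal{K}'=f(\mathcal{K}_{X'})$ for some $f\in\Mon^2(X)$ and some $X'$ birational to $X$, whence $f^{-1}(D)^\perp$ is a facet of the closed Kähler (nef) cone $\overline{\mathcal{K}_{X'}}$. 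By the convex duality between the nef cone and the Mori cone, a facet of $\overline{\mathcal{K}_{X'}}$ contained in a rational hyperplane $H^\perp$ is dual to an extremal ray of $\overline{\NE}(X')$ generated by the primitive class $H/\div(H)$; here $H=f^{-1}(D)$, and rationality of the ray is ensured by \cite[Cor. 18]{HT2} because $q(H)=q(D)<0$. Thus $f^{-1}(R)=f^{-1}(D)/\div(D)$ generates an extremal ray of $\overline{\NE}(X')$, and renaming $f^{-1}$ as the required monodromy operator finishes the argument.

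I expect the main obstacle to be the careful justification of the structural theorem invoked in the first paragraph, namely that the monodromy orbit of $\overline{\mathcal{BK}_X}$ tiles $\overline{\mathcal{C}_X}$ with chambers realised as Kähler cones of birational models; this is exactly where the global Torelli theorem and Markman's description of the monodromy action enter. The remaining ingredients — local finiteness and rationality of the negative part of the wall arrangement, the convex duality between facets of the nef cone and extremal rays of the Mori cone, and the $\Mon^2(X)$- and birational invariance of $W_X$ — are routine once that input is available.
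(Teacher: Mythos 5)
Your argument follows essentially the same route as the paper's: the forward implication rests on the global Torelli package (the monodromy orbit of $\overline{\mathcal{BK}}_X$ covering the positive cone, which the paper extracts from \cite[Thm. 6.18(2)]{mark_tor}) together with the local rational polyhedrality of the ample cone from \cite[Prop. 13 and Cor. 18]{HT2}, and the converse is \cite[Lemma 1.4]{Mo}; your chamber--facet bookkeeping is a correct, slightly more elaborate rendering of the paper's ``up to changing birational model, $f(D)^\perp$ supports a face of the ample cone, hence $R$ is extremal''. Two points of difference are worth recording. First, the one substantive omission is your blanket assumption that $X$ is projective: the proposition is stated for an arbitrary IHS manifold, and the paper explicitly covers the non-projective case by replacing Markman's theorem with \cite[Cor. 5.2 and Rem. 5.4]{H1}, where the cycle $\Gamma$ in those results is of parallel-transport type and acts as a Hodge isometry on $H^2(X,\mathbb{Z})$; since the result is later applied to deformations of $S^{[k]}_\eps$ that need not be projective, this case should not be discarded. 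Second, the ``structural theorem'' you single out as the main obstacle is a heavier input than the paper actually needs: the paper only uses that \emph{some} monodromy translate of $D^\perp$ meets $\overline{\mathcal{BK}}_X$, which, combined directly with the definition of wall divisor, already forces $f(D)^{\perp}$ to support a component of the boundary of $\mathcal{BK}_X$ --- one does not need the full tiling statement that every chamber of the wall arrangement is a K\"ahler cone of a birational model up to monodromy. Your handling of the isotropic case $q(R)=0$ in the converse correctly isolates the implicit hypothesis $q(D)<0$, and the verification that $W_X$ is stable under Hodge monodromy operators and under the parallel-transport isometry attached to a birational map is sound.
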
 

\begin{proof}
Let $D$ be a wall divisor. As $q(D)<0$, we have $D^\perp\cap \mathcal{C}_X\neq\emptyset$. Therefore, if $X$ is projective, there is a Hodge isometry $f\in \Mon^2(X)$ such that $f(D)^\perp\cap \overline{\mathcal{BK}}_X\neq\emptyset$  by \cite[Thm. 6.18 (2)]{mark_tor}.  If $X$ is not projective, the same result is a direct consequence of \cite[Cor. 5.2 and Rem. 5.4]{H1}, where the cycle $\Gamma$ in the mentioned results is of parallel transport and acts as a Hodge isometry on $H^2(X,\mathbb{Z})$.  By definition of wall divisor, $f(D)^{\perp} $ supports a component of the boundary of $\mathcal{BK}_X$. Up to taking a different birational model $X'$ of $X$, we can suppose $f(D)^{\perp}\cap \overline{\mathcal{K}}_X\neq\emptyset$. As the ample cone is locally rationally polyhedral by \cite[Prop. 13]{HT2}, we can also suppose that $f(D)^\perp$ supports a face of this cone (again, if needed, by changing birational model). This implies that $R$ is an extremal ray. 

The converse is the content of \cite[Lemma 1.4]{Mo} (see also \cite[Prop. 3]{BHT}).  
\end{proof}

\begin{remark} \label{rem:MBM}
The above result is also implied by \cite[Cor. 6]{BHT} and can be used in order to give an equivalent definition of wall divisors, i.e., divisors dual to extremal rays up to the action of parallel transport Hodge isometries. In other words, the MBM classes defined in \cite{AV} are exactly the classes of curves dual to wall divisors.
\end{remark}

A different characterisation of wall divisors can be given in terms of contractions:

\begin{thm} \label{thm:mazzone}
Let $R$ be a primitive rational curve on a projective IHS  manifold $X$ such that the dual divisor $D$ is a wall divisor. Then one the following cases occurs:
\begin{itemize}
\item[(i)] $\mathrm{Locus}(\mathbb{R}^+[R])$ contains a divisor of class a multiple of $D$. Furthermore, there exists a birational map $f\,:\,X\dashrightarrow Y$ with $Y$ singular symplectic such that $f$ contracts $R$.
\item[(ii)]  For a general small deformation $(X_t,R_t)$ of $(X,R)$ the locus $\mathrm{Locus}(\mathbb{R}^+[R_t])$ is not a divisor and there exists an IHS  manifold $X'_t$ along with a birational map $f_t\,:\,X_t\dashrightarrow X'_t$ and a morphism $X'_t\rightarrow Y_t$ contracting $f_t(R_t)$.
\end{itemize}
\end{thm}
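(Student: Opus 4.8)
The plan is to distinguish cases according to the dimension of $\mathrm{Locus}(\mathbb{R}^+[R])$: when this locus is a divisor the contraction can be produced directly on $X$ by the minimal model program, whereas when it has codimension at least two one must pass to a general deformation keeping $R$ algebraic, where $X$ is typically nonprojective, and there the contraction has to be obtained not by running the MMP but by deforming a singular symplectic variety produced in the projective case. Throughout I would use Proposition \ref{prop:wall_vs_mbm} to convert the wall-divisor hypothesis on $D$ into the existence of a genuine extremal ray on a suitable birational model.

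For case (i), suppose $\mathrm{Locus}(\mathbb{R}^+[R])$ is a divisor $E$. Then $E$ is a (sum of) prime exceptional divisor with $q(E)<0$ covered by the curves of class $R$, and such divisors are contractible directly on $X$ by the standard theory of crepant divisorial contractions of symplectic manifolds. Since $K_X=0$ the contraction $f\colon X\to Y$ is crepant, so $Y$ is a normal symplectic variety; as the curves contracted by $f$ are exactly those orthogonal to $D$ and the exceptional divisor is a conic- or $\mathbb{P}^1$-bundle over its image, one checks that the class of $E$ is a multiple of $D$. This yields case (i) with $Y$ singular symplectic.

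For case (ii), where $\mathrm{Locus}(\mathbb{R}^+[R])$ has codimension at least two, I would first apply Proposition \ref{prop:wall_vs_mbm} to obtain an IHS manifold $X'$ birational to $X$ — hence projective, being Moishezon and Kähler — together with a Hodge isometry $g\in\Mon^2(X)$ such that $g(R)$ generates an extremal ray of the Mori cone of $X'$. Running the MMP on the projective manifold $X'$ contracts this ray by a morphism $c\colon X'\to Y$ with $Y$ singular symplectic; because the symplectic form forces the image of the (codimension $\geq 2$) exceptional locus to be an even-codimensional symplectic subvariety, one verifies that $\codim(\Sing Y)\geq 4$, so Kirchner's Theorem \ref{thm:kirch} applies to $Y$. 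The idea is then to deform $Y$ rather than to contract on each fibre. By Namikawa's Theorem \ref{thm:nami} the space $\Def(Y)$ is smooth and finitely covered by $\Def(X')$, and by Theorem \ref{thm:kirch} together with Remark \ref{oss:loc_triv} the locally trivial deformations of $Y$ are identified, through the period map, with the deformations of $X'$ keeping $g(R)$ of type $(1,1)$ — equivalently, after parallel transport, the deformations of $X$ keeping $R$ of type $(1,1)$. Over this family the morphism $c$ deforms to contractions $X'_t\to Y_t$, and the birational map $X\dashrightarrow X'$ deforms to flops $f_t\colon X_t\dashrightarrow X'_t$ carrying $R_t$ to $g(R)_t$; for general $t$ the locus of $R_t$ stays non-divisorial, which is the content of case (ii).

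The hard part will be exactly this last propagation to the generally nonprojective deformations, where the MMP is unavailable: the contraction must be constructed by deforming the morphism $c$ along the locally trivial deformations of $Y$. The delicate points are to match, using the injectivity of the tangent map of the period map (Theorem \ref{thm:kirch}) and the description of first-order locally trivial deformations in Remark \ref{oss:loc_triv}, the smooth family of locally trivial deformations of $Y$ with the family of deformations of $X$ that keep $R$ algebraic, and then to show that both the relative contraction $X'_t\to Y_t$ and the flop $f_t$ extend over the whole base, including its nonprojective members. Verifying the bound $\codim(\Sing Y)\geq 4$, so that Kirchner's theorem is applicable, is the technical prerequisite that makes the entire deformation argument go through.
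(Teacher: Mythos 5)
Your overall architecture --- a dichotomy on the dimension of $\mathrm{Locus}(\mathbb{R}^+[R])$, MMP in the divisorial case, and in the non-divisorial case a contraction on a projective model propagated to nonprojective deformations via Namikawa and Kirchner (this is the paper's Lemma \ref{lemma:cago}) --- is the same as the paper's. But there is a genuine gap in your case (ii), and it is precisely the subtlety flagged in Remark \ref{rem:eccezziunali}. Proposition \ref{prop:wall_vs_mbm} applied to $X$ itself only produces a monodromy Hodge isometry $g\in \Mon^2(X)$ and a birational model $X'$ on which $g(R)$ --- not $R$ --- is extremal. The birational map $X\dashrightarrow X'$ induces the canonical identification of second cohomologies and hence carries $R_t$ to $R_t$, not to $g(R)_t$: the twist by $g$ (typically a composition of reflections in exceptional divisor classes, i.e.\ an element of $W_{exc}$) is not realised by any birational map, so your flops $f_t$ do not transport $R_t$ onto the ray you contract. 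Worse, the periods keeping $g(R)$ of type $(1,1)$ form the hypersurface $g(D)^\perp$, which meets $D^\perp$ at the period of $X$ but differs from it generically; so the family over which your contractions $X'_t\to Y_t$ deform is not the family of general deformations $(X_t,R_t)$ of $(X,R)$, and the conclusion of (ii) does not follow. The missing idea is the paper's first step: deform $(X,R)$ to a very general $(X'',R'')$ in the family keeping $R$ algebraic, where $R''$ generates all algebraic classes of $H_2(X'',\ZZ)$. There $W_{exc}$ is trivial, $\pm R''$ is effective by \cite[Prop. 3]{BHT}, the wall-and-chamber description of the positive cone shows $\overline{\mathcal{BK}}_{X''}=\overline{\mathcal{C}}_{X''}$, and one can pass to a further {\em projective} deformation on which $R$ itself is effective and extremal before contracting and invoking Lemma \ref{lemma:cago}.

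Two further points on case (i). First, you assert the divisor is ``contractible directly on $X$'' by a morphism; in general one only obtains a birational map (a sequence of flips from the MMP for $(X,\epsilon D)$ followed by a divisorial contraction), which is exactly why the theorem states $f$ as a rational map --- this is an overstatement but fixable. Second, your identification of the class of $E=\mathrm{Locus}(\mathbb{R}^+[R])$ as a multiple of $D$ leans on structure theory of the contracted divisor, and moreover your dichotomy is taken on $X$ rather than on the general deformation: by semicontinuity the locus can be divisorial on the special fibre $X$ without being so generically, in which case $[E]$ need not be proportional to $D$ and your case (i) argument breaks. Both issues disappear if the dichotomy is run on the very general $X''$, where $\NS(X'')$ has rank one generated by $D''$, so that any effective divisor there is automatically a multiple of $D''$ and its class deforms back to a multiple of $D$ on $X$.
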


\begin{proof}
Let $X''$ be an IHS  manifold deformation of $X$ such that the parallel transport $R''$ of $R$ is an effective rational curve generating the algebraic classes of $H_2(X'',\mathbb{Z})$ (cf. \cite[Thm. 1.3]{Mo} for the existence of such an $X''$). Let $D''$ be the dual divisor to $R''$. 

Suppose that $\mathrm{Locus}(\mathbb{R}^+[R''])$ has codimension one (thus, the same holds for $\mathrm{Locus}(\mathbb{R}^+[R])$ by semicontinuity) and let $bD''$ be the class of its closure.
As we deform back to $X$, the divisor $bD''$ deforms to $bD$, which is thus effective and is contained in $\mathrm{Locus}(\mathbb{R}^+[R])$. As $D\cdot R<0$, the MMP for the pair $(X,D)$ yields the existence of a birational map $f$ as in item (i).

Let us suppose now that $\mathrm{Locus}(\mathbb{R}^+[R''])$ has codimension at least two and show that we fall in case (ii). Under this assumption $X''$ contains no effective divisor. Then, by the wall and chamber decomposition of the positive cone given in \cite[\S 5]{mark_tor}, the closure of the birational K{\"a}hler cone of $X''$ coincides with its positive cone. On the other hand, as the curve $R''$ is effective, the K{\"a}hler cone is the intersection of the positive cone with the half space of real $(1,1)$-classes intersecting $R''$ positively. By the definition of the birational K{\"a}hler cone, this yields the existence of an IHS  manifold $Z''$ along with a birational map $X''\dashrightarrow Z''$, the indeterminacy locus of which is $\mathrm{Locus}(\mathbb{R}^+[R''])$. In particular, the class $-R''$ is effective on $Z''$ as proved in \cite[Cor. 2.4]{huy_kahl}. We now deform $X''$ (hence, also $X$) to a projective IHS manifold where the class of $R''$ is still effective; this is possible as, by  \cite[Prop. 3]{BHT}, all small deformations of $X$ where $D$ stays of type $(1,1)$ have $R$ or $-R$ effective and projective deformations are dense. In particular, we can choose a projective deformation $X'''$ where the parallel transport of $R$ is effective and extremal; indeed, up to changing birational model, $R$ is an extremal ray on all deformations $(X_0,R_0)$ belonging to the Zariski open set where $\overline{\mathcal{C}_{X_0}}=\overline{\mathcal{BK}_{X_0}}$. Therefore, the Contraction Theorem  
yields a contraction $X'''\rightarrow Y'''$ and the conclusion follows from the next lemma.
\end{proof}

\begin{lemma} \label{lemma:cago} 
Let $Z$ be a projective IHS  manifold and let $R$ be a curve generating an extremal ray such that $\mathrm{Locus}(\mathbb{R}^+[R])$ has codimension at least $2$. Let $Z \rightarrow Y$ be the contraction of this extremal ray. Then for all small locally trivial deformations $Y_t$ of $Y$ there is a symplectic resolution  $Z_t\rightarrow Y_t$ contracting exactly  $\mathrm{Locus}(\mathbb{R}^+[R_t])$, where $(Z_t,R_t)$ is a small deformation of $(Z,R)$.
\end{lemma}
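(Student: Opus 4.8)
The plan is to deform the resolution data rather than the target, using the locally trivial deformation theory recalled in Theorem \ref{thm:kirch} and Remark \ref{oss:loc_triv}. First I would observe that since $\mathrm{Locus}(\mathbb{R}^+[R])$ has codimension at least $2$, the contraction $Z\rightarrow Y$ is small, so $Y$ is a singular symplectic variety with $\codim(\Sing Y)\geq 2$; moreover, because $R$ is an extremal ray on an IHS manifold, the exceptional locus carries a holomorphic symplectic structure forcing $\codim(\Sing Y)\geq 4$ (this is standard for such fibre-type/small contractions on hyperk\"ahler manifolds), so that Theorem \ref{thm:kirch} applies to $Y$. Thus both $\Def(Z)$ and $\Def(Y)_{lt}$ are smooth, and by Theorem \ref{thm:nami} the natural map $\pi_*\,:\,\Def(Z)\rightarrow \Def(Y)$ is a finite covering, with locally trivial deformations of $Y$ sitting inside $\Def(Y)$ as a smooth locally closed subset.

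The key step is then to identify which deformations of $Z$ descend to \emph{locally trivial} deformations of $Y$. By Remark \ref{oss:loc_triv}, the first-order locally trivial deformations of $Y$ are parametrised by $H^{1,1}(Z)\cap R^\perp$, i.e.\ exactly those deformations of $Z$ that keep the class $R$ of type $(1,1)$ (equivalently, keep the dual wall divisor $D$ algebraic). So for each small locally trivial deformation $Y_t$ I would take the corresponding point of $\Def(Z)$ under the finite covering $\pi_*$ and let $Z_t$ be the associated deformation of $Z$; since deforming $Z$ along $R^\perp$ keeps $R$ algebraic, $(Z_t,R_t)$ is a small deformation of $(Z,R)$ with $R_t$ still of type $(1,1)$. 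The resolution $Z\rightarrow Y$ then deforms to a map $Z_t\rightarrow Y_t$ by the relative theory in Theorem \ref{thm:nami} (any smoothing/locally trivial deformation of $Y$ comes from a deformation of $Z$), and this map is again a symplectic resolution because being a symplectic resolution is an open condition along the smooth families $\Def(Z)$ and $\Def(Y)_{lt}$.

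Finally I would check that $Z_t\rightarrow Y_t$ contracts \emph{exactly} $\mathrm{Locus}(\mathbb{R}^+[R_t])$ and nothing more. The exceptional locus of the resolution contracts precisely the curve classes spanned by the parallel transport of $R$, by Remark \ref{oss:loc_triv}; since we have deformed along $R^\perp$, no new algebraic curve classes of negative square appear for general $t$, so the extremal ray stays primitive and the contracted locus is the flat limit of $\mathrm{Locus}(\mathbb{R}^+[R])$, which is $\mathrm{Locus}(\mathbb{R}^+[R_t])$. The main obstacle I anticipate is the bookkeeping needed to guarantee $\codim(\Sing Y)\geq 4$ so that Theorem \ref{thm:kirch} is applicable, and to ensure the correspondence between locally trivial deformations of $Y$ and the hyperplane section $R^\perp$ of $\Def(Z)$ is precise rather than merely first-order; matching the finite covering $\pi_*$ with the period map and verifying that the contracted locus deforms flatly (rather than jumping) is the delicate point, and is exactly where Remark \ref{oss:loc_triv} and the injectivity of the period map in Theorem \ref{thm:kirch} do the essential work.
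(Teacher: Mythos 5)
Your overall strategy coincides with the paper's: Wierzba's theorem gives $\codim(\Sing Y)\geq 4$, Namikawa's Theorem \ref{thm:nami} together with Remark \ref{oss:loc_triv} realises every small locally trivial deformation $Y_t$ as the target of a symplectic resolution $Z_t\rightarrow Y_t$ with $Z_t$ a small deformation of $Z$ keeping $[R]$ of type $(1,1)$, and a relative second Betti number count pins down the contracted locus. Two points, however, are genuine gaps rather than bookkeeping.

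First, you never establish that the class $[R_t]$ is represented by an effective curve, nor that such a curve is actually contracted by $Z_t\rightarrow Y_t$; your phrase ``the contracted locus is the flat limit of $\mathrm{Locus}(\mathbb{R}^+[R])$'' assumes precisely this. A class remaining of type $(1,1)$ under deformation need not remain effective, and even if it does, the curves representing it could a priori escape the exceptional locus. The paper closes this by citing \cite[Prop. 3]{BHT} (small deformations of the class of an extremal curve are represented by curves) and then the Rigidity Lemma (a deforming family of curves contracted at $t=0$ stays contracted for small $t$). Without some such input your argument is circular at exactly the point where the conclusion is reached.

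Second, your argument that nothing more is contracted is phrased ``for general $t$'', but the lemma is asserted for \emph{all} small locally trivial deformations $Y_t$, and on special deformations inside $R^\perp$ extra algebraic classes of negative square certainly can appear. The paper's route avoids this: Remark \ref{oss:loc_triv} gives $b_2(Z)=b_2(Y)+1$, hence $b_2(Z_t)=b_2(Y_t)+1$ for \emph{every} $t$ in the locally trivial family, so the classes of curves contracted by $Z_t\rightarrow Y_t$ span a single line, namely $\mathbb{R}[R_t]$, regardless of how large $\Pic(Z_t)$ happens to be. Finally, your justification of $\codim(\Sing Y)\geq 4$ (``the exceptional locus carries a holomorphic symplectic structure'') is not the actual reason --- this is \cite[Thm. 1.3]{Wie} --- though you correctly flag that step as the one needing an external reference.
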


\begin{proof}
By \cite[Thm. 1.3]{Wie}, the singular locus of $Y$ has codimension at least four. 
Let $Y_t$ be a locally trivial small deformation of $Y$. Then $Y_t$ has the same Beauville-Bogomolov form of that of $Y$ (and also the same second Betti number) and it has a symplectic resolution $Z_t$, which is a small deformation of $Z$ by Theorem \ref{thm:nami}. Remark \ref{oss:loc_triv} ensures that the deformation $[R_t]$ of $[R]$ is algebraic. As $R$ is extremal, small deformations $[R_t]$ of its class are represented by curves $R_t$ \cite[Prop. 3]{BHT};  the Rigidity Lemma then implies that $R_t$ is contracted by $Z_t\to Y_t$.
 By Remark \ref{oss:loc_triv}, $b_2(Z)=b_2(Y)+1$. Hence, $b_2(Z_t)=b_2(Y_t)+1$ and the map contracts precisely $\mathrm{Locus}(\mathbb{R}^+[R_t])$.
\end{proof}

\begin{remark} \label{rem:eccezziunali}
The first item of Theorem \ref{thm:mazzone} is slightly stronger than \cite[Prop. 6.1]{mark_tor} as it ensures that {\em exceptional divisors}, as defined in \cite[Def. 5.1]{mark_tor}, are contractible, up to birational equivalence. This should be regarded as the higher dimensional analogue of the contractability of  effective divisors with self-intersection $-2$ on $K3$ surfaces. Notice that, when $R$ is reducible, the contraction does not necessarily have relative Picard rank one. The contraction map $f\,:\,X\dashrightarrow Y$ is a composition of flops and divisorial contractions and therefore is only rational. The second item of the proposition cannot be strengthened and in particular it might not hold for $(X,R)$. Indeed, one has to take into account the action of the subgroup $W_{exc}$ of $\Mon^2$ generated by the reflections on reduced and irreducible exceptional divisors. The general deformations in the statement are precisely those manifolds where $W_{exc}$ is the identity. Note that this  set strictly contains the open set of manifolds with an irreducible Hodge structure and it is Zariski open as the set of generators of $W_{exc}$ is finite up to the monodromy action.
\end{remark}

Wall divisors on $S^{[k]}_\eps$ can be determined lattice-theoretically using results of Yoshioka \cite{yoshi} and Bayer and Macr\`i \cite{bm}. In the following, we use the same notation as in Remark \ref{rem:hodge}.

\begin{remark}\label{bridg}
In \cite{bm} and \cite{yoshi}, Bayer, Macr\`i and Yoshioka determine a decomposition of the space of stability conditions $\mathrm{Stab}^0(S,v)$ given by walls and chambers. Any stability condition $\sigma$ in a chamber gives a smooth moduli space $M(v,S,\sigma)$ of stable objects in $D^b(S)$ with Mukai vector $v$, whereas any condition lying on a wall gives a singular space and conditions on nearby chambers give its symplectic resolution. Moreover, for every $\sigma$ in a chamber of $\mathrm{Stab}^0(S,v)$, \cite[Thm 1.2]{bm} gives a map from $\mathrm{Stab}^0(S,v)$ to the positive part of the movable cone $\overline{\mathcal{BK}_{M(v,S,\sigma)}}$, and every chamber lands in $\mathcal{BK}_{M(v,S,\sigma)}$. By Proposition \ref{prop:wall_vs_mbm}, this implies that all walls of $\mathrm{Stab}^0(S,v)$ are dual to wall divisors and, up to the action of $W_{exc}$ (defined in Remark \ref{rem:eccezziunali}), we obtain all wall divisors of $M(v,S,\sigma)$ in this way. By Remark \ref{rem:hodge} along with the fact that Mumford's stability lies in  $\mathrm{Stab}^0(S,v)$ for any $v$, the ordinary moduli spaces of Mumford's stable sheaves with Mukai vector $v$ is obtained as  $M(v,S,\sigma)$ for a $\sigma\in \mathrm{Stab}^0(S,v)$. In particular, $S^{[k]}_\eps$ is the Albanese fibre of some $M(v,S,\sigma)$.
\end{remark}

\begin{thm} 
\label{thm:muri}
Let $D$ be a divisor of $S^{[k]}_\eps$ with $q(D)<0$ and let $T\subset \Lambda:=H^{2*}(S,\mathbb{Z})$ be the saturated lattice generated by $v:=(1,0,1-2\eps-k)$ and $D$. Then $D$ is a wall divisor if and only if there is an $s\in T$ such that

\begin{itemize}
\item[(i)]  $0\leq q(s)<b(s,v)\leq (q(v)+q(s))/2$; or,
\item[(ii)] $\eps=0$, $q(s)=-2$ and $0\leq b(s,v)\leq q(v)/2$.
\end{itemize}
\end{thm}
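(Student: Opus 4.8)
The plan is to connect the two characterizations of wall divisors that the paper has already established with the explicit lattice-theoretic wall structure coming from Bayer--Macr\`i and Yoshioka. By Proposition \ref{prop:wall_vs_mbm} together with Remark \ref{bridg}, a divisor $D$ with $q(D)<0$ is a wall divisor on $S^{[k]}_\eps$ if and only if the dual primitive class $R=D/\div(D)$ becomes, after a monodromy Hodge isometry, the generator of an extremal ray of the Mori cone on a birational model, and such classes correspond exactly to the walls of $\mathrm{Stab}^0(S,v)$ with $v=(1,0,1-2\eps-k)$ as in Remark \ref{rem:hodge}. So the strategy is to transport the question entirely into the Mukai lattice $\Lambda=H^{2*}(S,\ZZ)$, where $S^{[k]}_\eps$ is realized as (the Albanese fibre of) a moduli space $M(v,S,\sigma)$, and then quote the numerical description of totally semistable walls.

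First I would fix the Hodge isometry $H^2(S^{[k]}_\eps,\ZZ)\cong v^\perp\subset\Lambda$ of Remark \ref{rem:hodge}, under which $D$ maps to a class of negative square in $v^\perp$, and let $T$ be the saturation of $\langle v,D\rangle$ in $\Lambda$; this $T$ is a rank-two primitive sublattice containing $v$. The key input is the Bayer--Macr\`i classification of walls in the space of stability conditions: a potential wall associated with a rank-two sublattice $T$ actually defines a wall (equivalently, the moduli space acquires a birational or genuinely contractible modification there) precisely when $T$ contains a class $s$ satisfying one of the numerical inequalities in the statement. Condition (i) corresponds to a spherical or isotropic destabilizing object forcing a genuine contraction/flop, while condition (ii), available only in the $K3$ case $\eps=0$ where $(-2)$-classes exist, records the presence of a spherical class of square $-2$ producing a divisorial contraction. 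Thus the bulk of the argument is to match the abstract wall-existence criterion of \cite{bm,bm2,yoshi} with the two displayed conditions, checking that the normalizations $0\le q(s)$, $b(s,v)\le (q(v)+q(s))/2$, and so on are exactly the inequalities cut out by total semistability along the wall.

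Concretely, for the forward direction I would argue that if $D$ is a wall divisor, then by Remark \ref{bridg} the dual class lies on a wall of $\mathrm{Stab}^0(S,v)$, and by the Bayer--Macr\`i wall classification the rank-two lattice $T$ must contain a class $s$ that is either spherical of square $-2$ (giving (ii) in the $K3$ case) or defines a potential destabilizer with the numerical profile of (i); one then chooses the representative $s$ in $T$ minimizing $b(s,v)$ among destabilizing classes and verifies it lands in the stated range. For the converse, given such an $s\in T$ one constructs the corresponding (semi)stable object of class $s$ on the wall, producing an actual contraction of $M(v,S,\sigma)$; passing to the Albanese fibre and dualizing via Proposition \ref{prop:wall_vs_mbm} shows $D$ is a wall divisor.

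The main obstacle I anticipate is bookkeeping rather than conceptual: translating the intrinsic Beauville--Bogomolov conditions on $D\in v^\perp$ into the Mukai-pairing conditions on $s\in T$, and in particular keeping track of the $\eps$-dependence. The two cases $\eps=0$ and $\eps=1$ have different Mukai lattices ($U^{\oplus4}\oplus E_8(-1)^{\oplus2}$ versus $U^{\oplus4}$), and the existence of $(-2)$-classes — hence the availability of condition (ii) — is precisely what distinguishes them; one must verify carefully that in the abelian case no square $-2$ spherical class intervenes, so that only (i) survives. A secondary subtlety is ensuring that the wall detected in $\mathrm{Stab}^0(S,v)$ really descends to the Albanese fibre $S^{[k]}_\eps$ rather than merely to the full moduli space, but this is handled by the last sentence of Remark \ref{bridg}. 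Once the dictionary between $s$, $v$, and $D$ is set up correctly, the inequalities in (i) and (ii) should fall out directly from the numerical criteria for totally semistable walls.
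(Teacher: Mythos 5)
Your proposal follows essentially the same route as the paper: reduce via Remark \ref{bridg} to walls of $\mathrm{Stab}^0(S,v)$ under the Hodge isometry $H^2(S^{[k]}_\eps,\ZZ)\cong v^\perp\subset\Lambda$, and then invoke the numerical wall criteria of Bayer--Macr\`i (\cite[Thms.~5.7 and 12.1]{bm}, for $\eps=0$) and Yoshioka (\cite[Prop.~1.3]{yoshi}, for $\eps=1$). The bookkeeping you defer is exactly what the paper records: setting $a:=s$ and $b:=v-s$, condition (i) says both lie in the positive cone of $T$ (rather than being the spherical/isotropic case, as you suggest), while (ii) is the separate spherical ($q(s)=-2$) case available only in the $K3$ setting, and the extra hypothesis $b(s,v)^2>q(v)q(s)$ in Yoshioka's criterion is absorbed by $q(D)<0$.
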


\begin{proof}
Remark \ref{bridg} implies that all wall divisors of $S^{[k]}_\eps$ correspond to walls in the space $\mathrm{Stab}^0(S,v)$.

For $\eps=0$ we can thus apply \cite[Thms. 5.7 and 12.1]{bm} with $a:=s$ and $b:=v-s$; our inequalities are equivalent to imposing that both $a$ and $b$ are in the {\em positive cone of $T$} (cf. \cite[Def. 5.4]{bm}), i.e., $q(a)\geq 0$ and $b(v,a)>0$ and the same for $b$. 

For $\eps=1$ the statement follows from\cite[Prop. 1.3]{yoshi}. Indeed, the conditions in \cite[Def. 1.2]{yoshi} can be rephrased by asking that $a:=s$ and $b:=v-s$ are in the positive cone of $T$ as before. The additional condition $b(s,v)^2 > q(v)q(s)$ in \cite[Prop. 1.3]{yoshi} is equivalent to the requirement that $T$ is indefinite, which is implied by $q(D)<0$. 
\end{proof}

\begin{remark}\label{rem:nonprim}
A lattice $T$ as in the above theorem can contain several elements $s$ satisfying (i) and (ii), and abstractly isometric lattices can even correspond to different kinds of wall divisors, as the following example illustrates (cf. also \cite[Sec. 4]{HT_new}). Let $k-1+2\eps=2rt$, where $r$ and $t$ are relatively prime integers. Let $S$ be a symplectic surface and let $\mathcal{M}$ be the moduli space of stable sheaves with Mukai vector $v:=(r,0,-t)$. Let $\Gamma\in H^{1,1}(\mathcal{M},\ZZ)$ be the  image of $(r,0,t)$ under the natural Hodge isometry $H^2(\mathcal{M},\ZZ)\cong v^\perp\subset H^{2*}(S,\ZZ)$. The saturated lattice generated by $v$ and $\Gamma$ is isometric to $U$ and contains no elements $s$ such that $q(s)=0$ and $b(s,v)=1$, unless either $r$ or $t$ are $1$. Note that $\frac{v+\Gamma}{2r}$ and $\frac{v-\Gamma}{2t}$ satisfy the conditions of the above theorem, and hence $\Gamma$ is a wall divisor. The lattice $U$ is also associated with the exceptional divisor $\Delta_k$ of $S^{[k]}_\eps$, but in the saturated lattice generated by $v$ and $\mathfrak{e}_k$ there is an element $s$ such that $b(s,v)=1$ and $q(s)=0$. However, isometric lattices as in Theorem \ref{thm:muri} give rise to isometric wall divisors.
\end{remark}

Theorem \ref{thm:muri} enables us to extend  to manifolds of Kummer type a result obtained by Bayer, Hassett and Tschinkel, and independently by the third author, in the case of manifolds of $K3^{[k]}$ type.

\begin{prop} \label{prop:bound} 
Let $R$ be a primitive generator of an extremal ray of the Mori cone of a manifold $X$ deformation of $S^{[k]}_\eps$. Then $q(R)\geq -(k+3-2\eps)/2$.
\end{prop}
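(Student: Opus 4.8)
The plan is to reduce the statement to a lattice-theoretic optimisation on the Mukai lattice and then apply Theorem \ref{thm:muri}. Since $R$ generates an extremal ray, its dual divisor $D$ (taken primitive) is a wall divisor by Proposition \ref{prop:wall_vs_mbm}, and $R=D/\div(D)$, so that $q(R)=q(D)/\div(D)^2$. Both $q(D)$ and $\div(D)$ are invariant under deformation and under the action of $\Mon^2$, and being a wall divisor is likewise a deformation- and monodromy-invariant condition; hence I may assume $X=S_\eps^{[k]}$ and identify $H^2(X,\ZZ)\cong v^\perp\subset\Lambda$ as in Remark \ref{rem:hodge}, with $v=(1,0,1-2\eps-k)$, so that $q(v)=2(k-1+2\eps)=:2N$ and $\div_\Lambda(v)=1$. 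Theorem \ref{thm:muri} then applies to $D$.

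The key reduction is the identity $q(R)=\disc(T)/q(v)$, where $T$ is the saturated rank-two sublattice of $\Lambda$ generated by $v$ and $D$. Since $D\perp v$, the orthogonal sublattice $\ZZ v\oplus\ZZ D\subseteq T$ has discriminant $q(v)q(D)$, whence $\disc(T)=q(v)q(D)/[T:\ZZ v\oplus\ZZ D]^2$; so the identity amounts to $\div(D)=[T:\ZZ v\oplus\ZZ D]$. I would establish this using that $\Lambda$ is unimodular and $\div_\Lambda(v)=1$: these yield a group isomorphism $\Lambda/\ZZ v\cong(v^\perp)^{\vee}$ given by $w\mapsto b(w,\cdot)|_{v^\perp}$, under which a vector $\tilde s$ completing $v$ to a basis of the \emph{saturated} lattice $T$ maps to a primitive element (saturation of $T$ being essential here). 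Writing $D=av+b\tilde s$ with $\gcd(a,b)=1$, one gets simultaneously $[T:\ZZ v\oplus\ZZ D]=|b|$ and $\div(D)=|b|$, proving the identity. I expect this lattice lemma, in particular the control of the divisibility $\div(D)$ sitting in the denominator of $q(R)$, to be the main obstacle.

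With the identity in hand, Theorem \ref{thm:muri} provides $s\in T$ satisfying (i) or (ii). Since $\langle v,s\rangle\subseteq T$ is a finite-index rank-two sublattice of negative discriminant, $\disc(T)\ge\disc\langle v,s\rangle=q(v)q(s)-b(s,v)^2$, and therefore
\[
q(R)=\frac{\disc(T)}{q(v)}\ge q(s)-\frac{b(s,v)^2}{q(v)}=q(s)-\frac{b(s,v)^2}{2N}.
\]
It then remains to minimise the right-hand side over the constraints of Theorem \ref{thm:muri}. In case (i) one has $0\le q(s)$ and $0<b(s,v)\le(q(v)+q(s))/2=N+q(s)/2$; substituting the extremal value of $b(s,v)$ and optimising in $q(s)\ge0$ yields $q(R)\ge -N/2$. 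In case (ii) (only for $\eps=0$) one has $q(s)=-2$ and $0\le b(s,v)\le q(v)/2=N$, giving $q(R)\ge -2-N/2=-(N+4)/2$. Combining the two cases, $q(R)\ge -(N+4-4\eps)/2=-(k+3-2\eps)/2$, which is exactly the claimed bound; the extremal configurations are $s$ spherical with $b(s,v)=N$ for $\eps=0$ and $s$ isotropic with $b(s,v)=N$ for $\eps=1$, corresponding respectively to the Lagrangian-plane class and its abelian analogue.
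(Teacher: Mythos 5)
Your proof is correct, and for $\eps=1$ it is in essence the paper's argument: reduce (by deformation/monodromy invariance of wall divisors, $q$ and $\div$) to $X=S^{[k]}_\eps$, invoke Theorem \ref{thm:muri}, and bound $q(R)=q(D)/\div(D)^2$ below by $q(s)-b(s,v)^2/q(v)$ using $q(s)\geq 0$ and $b(s,v)\leq (q(v)+q(s))/2$. Two genuine differences are worth recording. First, the paper disposes of $\eps=0$ by citing \cite[Cor.~2.7]{Mo} and \cite[Prop.~2]{BHT} and only computes the abelian case; you treat both cases uniformly, with the spherical alternative (ii) of Theorem \ref{thm:muri} producing the extremal value $-(k+3)/2$, so your proof is self-contained. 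Second, where the paper writes $aD=b(s,v)v-q(v)s$ with $a=\GCD(q(v),b(s,v))$ and asserts $\div(D)=q(v)/a$ --- which tacitly assumes that $v$ and $s$ generate $T$ up to the expected index --- you first prove the clean identity $q(R)=\disc(T)/q(v)$ from unimodularity of $\Lambda$ and $\div_\Lambda(v)=1$ (your index computation $\div(D)=[T:\ZZ v\oplus\ZZ D]=|b|$ is correct), and then only need the inequality $\disc(T)\geq \disc\langle v,s\rangle$, valid for \emph{any} $s\in T$ as in the theorem since both lattices are indefinite of rank two; this is a more robust packaging of the same computation. Two small points you should make explicit: (a) one may assume $q(R)<0$, since otherwise the bound is vacuous and Proposition \ref{prop:wall_vs_mbm} does not apply; (b) in case (i), after substituting $b(s,v)\leq N+q(s)/2$ the function $q(s)/2-q(s)^2/(8N)-N/2$ is \emph{not} bounded below over all $q(s)\geq 0$, so you must use that the constraints $q(s)<b(s,v)\leq N+q(s)/2$ force $q(s)<2N$, placing you on the increasing part of that parabola, whence the minimum $-N/2$ is attained at $q(s)=0$. (The paper needs the same observation, in the form $(q(v)-q(s))^2\leq q(v)^2$.) Neither point affects the validity of your argument.
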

\begin{proof}
For $\eps=0$ this is the content of \cite[Cor. 2.7]{Mo} or \cite[Prop. 2]{BHT}.

Let $\eps=1$ and $q(R) <0$. Then the dual divisor $D$ to $R$, namely, $R=D/\div(D)$, is a wall divisor by 
Proposition \ref{prop:wall_vs_mbm}. As wall divisors are invariant under deformation, we can assume \mbox{$X=S^{[k]}_1$} for some abelian surface $S$. Let $T,v,s$ be as in Theorem \ref{thm:muri}. Let $a:=\GCD(q(v),b(s,v)).$ We have $aD=b(s,v)v-q(v)s$ and $\div(D)=q(v)/a$. 
Then we have	
\begin{eqnarray*}
q(D)&=&(q(v)^2q(s)-q(v)b(s,v)^2)/a^2\geq\\
&\geq& \frac{4q(v)^2q(s)-q(v)^3-q(v)q(s)^2-2q(v)^2q(s)}{4a^2} \geq -\frac{q(v)^3}{4a^2}=-\frac{(k+1)\div(D)^2}{2},
\end{eqnarray*} 
where we have used the inequality  $b(s,v)\leq (q(v)+q(s))/2$.
\end{proof}

The above statement in the $K3$ case is part of a conjecture by Hassett and Tschinkel \cite[Conj. 1.2]{HTint}, who predicted that the class $R$ of a primitive $1$-cycle in a manifold of $K3^{[k]}$-type is effective if and only if the inequality in Proposition \ref{prop:bound} holds. Counterexamples to the if part are known, cf. \cite[Rem. 10.4]{bm2} and \cite[Rem. 8.10]{ck}. The analogous conjecture for manifolds $X$ of Kummer type was stated only in the four-dimensional case \cite[Conj. 1.4]{HTint}. Proposition \ref{prop:bound} shows that the only if part holds independently of the dimension of $X$; on the other hand, the if part fails as soon as $\dim X>4$, as the following example shows.

\begin{example}\label{ex:curvaza}
Let $S$ be an abelian surface with an order four symplectic group automorphism $\varphi$. Such an automorphism induces an automorphism $\varphi$ of order four on all the generalised Kummer manifolds arising from $S$. There exists a primitive non-effective class $F\subset \NS(S)$ such that \mbox{$\varphi(F)=-F$} and $F^2=-2$, cf. \cite[Table 15]{fuj}. This class gives a $1$-cycle class in $N_1(S_1^{[k]})$ that is orthogonal to any $\varphi$-invariant ample class (hence, it is not effective) and has square $-2$. This shows that the inequality in Proposition \ref{prop:bound} is not sufficient for the effectivity of a $1$-cycle. \end{example}

 We now state a criterion for determining whether a projective manifold of $K3^{[k]}$ or Kummer type is isomorphic to $S^{[k]}_\eps$ for some $S$.

\begin{prop} \label{prop:qualisono}
Let $X$ be a projective manifold of $K3^{[k]}$ or Kummer type. Then $X$ is isomorphic to $S^{[k]}_\epsilon$ for some $S$ if and only if  there is a birational map $f:S^{[k]}_\eps\dashrightarrow X$ and $f^*[D] \in \mathfrak{e}_k^\perp$ for some nef divisor $D\in \NS(X)$. 
\end{prop}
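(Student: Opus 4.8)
The forward implication is immediate, so the content is the converse. If $X\cong S'^{[k]}_\eps$, I would take $S=S'$ and $f$ the isomorphism, and set $D=f_*\mu_k^*A$ for $A$ an ample class on $\Sym^{k+\eps}(S')$ (resp. on the base of the Kummer Hilbert--Chow map when $\eps=1$). Then $D$ is nef and, since $\mu_k$ contracts $\mathfrak{r}_k$, one gets $b(f^*D,\mathfrak{e}_k)=2(k-1+2\eps)\,(f^*D\cdot\mathfrak{r}_k)=0$, so $f^*D\in\mathfrak{e}_k^\perp$.

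For the converse I am given $f\colon S^{[k]}_\eps\dashrightarrow X$ birational and a nef $D\in\NS(X)$ with $f^*D\in\mathfrak{e}_k^\perp$, and the plan is to show that $D$ detects a Hilbert--Chow divisorial contraction on $X$. First I would recall that $f$ induces a parallel transport Hodge isometry $f^*\colon H^2(X,\ZZ)\to H^2(S^{[k]}_\eps,\ZZ)$ identifying $\overline{\mathcal{BK}}_X$ with $\overline{\mathcal{BK}}_{S^{[k]}_\eps}$, the birational K\"ahler cone being a birational invariant. Set $g:=(f^{-1})^*\mathfrak{e}_k\in H^2(X,\ZZ)$. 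Since $\mu_k$ is a divisorial contraction, $\mathfrak{e}_k^\perp$ supports the corresponding facet of $\overline{\mathcal{BK}}_{S^{[k]}_\eps}$ and misses its interior; hence $\mathfrak{e}_k$, and therefore its monodromy translate $g$, is a wall divisor (Definition \ref{def:wall}), with $q(g)=q(\mathfrak{e}_k)=-2(k-1+2\eps)$ and dual curve $R_g=g/\div(g)$ lying in the monodromy orbit of $\mathfrak{r}_k$.

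Next I would produce the contraction. By hypothesis $b(D,g)=0$, so $D$ lies on $g^\perp$; as $g$ is a wall divisor, $g^\perp\cap\mathcal{K}_X=\emptyset$, and since $D\neq0$ lies in $g^\perp\cap\overline{\mathcal{K}}_X$, the hyperplane $g^\perp$ is a supporting hyperplane of the nef cone, cutting out a face containing $D$. Passing, if necessary, to the elementary face spanned by the extremal ray dual to $g$, I would invoke Theorem \ref{thm:mazzone}: because a nef class orthogonal to $g$ genuinely exists on $X$ itself, we land in case (i), so $\mathrm{Locus}(\mathbb{R}^+[R_g])$ contains a divisor of class a multiple of $g$ and $D$ contracts it. Thus $D$ defines a divisorial contraction morphism $\pi\colon X\to Z$ onto a singular symplectic variety, contracting the divisor $f_*\Delta_k$ (of class $2g$) along the curves $R_g$.

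It remains to identify $\pi$ as a Hilbert--Chow morphism, which is the crux. Deformation invariance of the contraction type (\cite{ran,BHT}) reduces the local structure of $\pi$ to that of $\mu_k$ on $S^{[k]}_\eps$, where the contracted divisor is the $\mathbb{P}^1$-bundle $\Delta_k$ and the image is the symmetric product; the lattice type of $g$ (its square and divisibility) matches exactly the Hilbert--Chow wall singled out in Remark \ref{bridg} and Theorem \ref{thm:muri} via the Bayer--Macr\`i/Yoshioka wall-and-chamber description. Feeding this into the characterisations of Hilbert schemes and generalised Kummers among manifolds of $K3^{[k]}$- and Kummer-type (the circle of ideas of \cite{add,mw}), I would conclude that $Z$ is the (Kummer-)symmetric product of a surface $S'$ and that $\pi$ is its Hilbert--Chow resolution, whence $X\cong S'^{[k]}_\eps$. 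The hard part will be precisely this last identification: knowing that $\pi$ is divisorial with exceptional class in the monodromy orbit of $[\Delta_k]$ does not a priori force it to be Hilbert--Chow rather than some other divisorial contraction with the same numerical invariants, and ruling this out is what requires the structural input of \cite{add,mw} together with the deformation-invariance of extremal contractions.
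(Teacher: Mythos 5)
Your forward direction and the reduction to the converse are fine, but the converse as written has a genuine gap, and it sits exactly where you flag it: after producing a divisorial contraction whose exceptional class lies in the monodromy orbit of $\mathfrak{e}_k$, you have no argument that this contraction is a Hilbert--Chow morphism, and the appeal to ``the circle of ideas of \cite{add,mw}'' is not a proof. Nothing in the numerical data ($q(g)=-2(k-1+2\eps)$, the divisibility, the lattice $T$ of Theorem \ref{thm:muri}) pins down the contraction as $\mu_k$ rather than another contraction with the same invariants, and you would still have to produce the surface $S'$ and the isomorphism $X\cong S'^{[k]}_\eps$ from the singular target $Z$. There are also problems upstream: Theorem \ref{thm:mazzone}(i) only yields a \emph{birational} map $X\dashrightarrow Y$ contracting the dual curve (possibly after flops), not a morphism from $X$ itself, and the dichotomy between its two cases is governed by whether $\mathrm{Locus}(\mathbb{R}^+[R_g])$ is a divisor on a suitable deformation --- not by the existence on $X$ of a nef class orthogonal to $g$, which is compatible with both cases. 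So even the intermediate claim ``$D$ defines a divisorial contraction morphism $\pi\colon X\to Z$'' is not established.

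The paper's proof avoids all of this by never leaving $S^{[k]}_\eps$. The key claim is that $\overline{\mathcal{BK}}_{S^{[k]}_\eps}\cap\mathfrak{e}_k^\perp=\overline{\mathcal{K}}_{S^{[k]}_\eps}\cap\mathfrak{e}_k^\perp$, i.e.\ every movable class orthogonal to $\mathfrak{e}_k$ is already nef. Such a class $E$ comes from a divisor $E_S$ on the surface via \eqref{k_immersion}; if $E$ failed to be nef, an irreducible curve $\Gamma$ with $E\cdot\Gamma<0$ would project through the incidence variety \eqref{eq:incidenzafiga} to an effective curve $\Gamma_S$ with $E_S\cdot\Gamma_S<0$. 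On an abelian surface this is impossible; on a $K3$ it forces a $(-2)$-curve $R$ with $E_S\cdot R<0$, and the associated uniruled divisor $D_R$ satisfies $b(E,D_R)<0$, contradicting movability by \cite[Prop. 5.6]{mark_tor}. Granting the claim, $f^*[D]$ is movable and lies in $\mathfrak{e}_k^\perp$, hence is nef, a small ample perturbation of $D$ pulls back to an ample class, and the global Torelli theorem upgrades $f$ to an isomorphism --- so $X\cong S^{[k]}_\eps$ for the \emph{same} $S$, with no contraction ever constructed. If you want to salvage your route, the realistic fix is to redirect it toward this endpoint: show directly that $f^*$ sends some ample class of $X$ to an ample class of $S^{[k]}_\eps$ and invoke Torelli, rather than trying to classify the contraction defined by $D$.
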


\begin{proof}
The only if part is trivial and we prove the converse implication.

We first claim that $\overline{\mathcal{BK}}_{S^{[k]}_\epsilon}\cap \mathfrak{e}_k^\perp=\overline{\mathcal{K}}_{S^{[k]}_\epsilon}\cap \mathfrak{e}_k^\perp $, that is, all movable divisors on $\mathfrak{e}_k^\perp$ are nef.  Granting this, the divisor class $f^*[D]\in  \NS(S^{[k]}_\epsilon)$ lies in the image of \eqref{k_immersion} and is movable, hence nef. Moreover, the pullback under $f$ of a small ample modification of $D$ is ample on $S^{[k]}_\eps$ and thus $X\simeq S^{[k]}_\eps$ by the global Torelli Theorem \cite[Thms. 1.2 and 1.3]{mark_tor}.

It remains to prove the claim. Let $E\in\overline{\mathcal{C}_{S^{[k]}_\epsilon}}$ be a divisor such that $b(E,\mathfrak{e}_k)=0$. In particular, the class $[E]$ lies in the image of the restriction of \eqref{k_immersion} to the closure of the positive cone $\overline{\mathcal{C}_S}$ and we will denote by $E_S$ an effective divisor on $S$ representing its preimage. Let us assume that $[E]$ is not nef.  
Any irreducible curve $\Gamma\subset S^{[k]}_\epsilon$ such that $\Gamma \cdot E<0$ is not contained in $\Delta_k$. The image of such a $\Gamma$ under the projection to $S$ of the incidence variety 
\begin{equation}\label{eq:incidenzafiga}
I:=\{(P,[Z])\in S\times S^{[k]}_\epsilon\,|\, P\in\Supp(Z)\}
\end{equation}
is an effective curve $\Gamma_S\subset S$, whose class is sent to $[\Gamma]$ by \eqref{imm2}. Since $E_S\cdot \Gamma_S<0$, the divisor $E_S$ is not nef. In the abelian case this is impossible and hence $[E]$ is nef and we are done.  Let us show that in the $K3$ case $[E]$ is not movable. Let $R\subset S$ be a ($-2$)-curve such that $E_S\cdot R<0$ and denote by $D_R\subset S^{[k]}$ the corresponding uniruled divisor defined as in \eqref{imm3}. Then $b(E,D_R)<0$, whence $E$ is not movable by \cite[Prop. 5.6]{mark_tor}. 
 \end{proof}
 
 \begin{remark}
In the above proposition the condition that $X$ is birational to $S^{[k]}_\eps$ is equivalent to asking that there is a parallel transport Hodge isometry between the two manifolds, cf. \cite[Thm. 1.3]{mark_tor}. If $S$ is $K3$, there is a topological way of recognizing a parallel transport Hodge isometry, cf. \cite[Cor. 9.5]{mark_tor}. By the computation of the monodromy group in the Kummer case \cite[Thm. 2.3]{mon_mon}, it is highly expected that a similar characterisation holds if $S$ is abelian. 
 \end{remark}

We end this section with a result that will be used in the proof of Theorem \ref{uno}. 

\begin{prop}\label{bundle_finito}
Let $X$ be a holomorphic symplectic manifold, i.~e., there is an \'{e}tale cover \mbox{$\widetilde{X}:=\Pi_{i\in I} M_i\to X$,} where every $M_i$ is either IHS or abelian. For every subset $J\subset I$, denote by $F_J$ the image in $X$ of a general fibre of the projection $\widetilde{X}\to \Pi_{j\in J} M_j$. Let
$q:\,\mathcal{P}\rightarrow X$ be generically a $\mathbb{P}^r$-bundle. 

Assume that
$g\,:\,\mathcal{P}\dashrightarrow Y$ is a rational map to an IHS manifold $Y$
such that:
\begin{itemize}
\item $\dim Y=2r+\dim X$;
\item  $g$ is well-defined in codimension one;
\item $g$ is injective on general fibres of $q$;
\item for all $J$, the map $g$ is generically injective when restricted
to $\mathcal{P}_{|F_J}$;
\item the image of $g$ is an irreducible component of the locus covered by the rational curves of class $[g(\ell)]$, where $\ell$ is a line in a fibre of $q$.
\end{itemize}

Then $g$ is finite.
\end{prop}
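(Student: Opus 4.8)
The plan is to first determine the shape of $g$ and then exclude positive-dimensional fibres by a symplectic argument. The starting observation is that the last injectivity hypothesis already contains generic injectivity: taking $J=\emptyset$ one has $\Pi_{j\in\emptyset}M_j=\{\mathrm{pt}\}$, so $F_\emptyset=X$ and $\mathcal{P}_{|F_\emptyset}=\mathcal{P}$, whence $g$ is generically injective and thus birational onto its image $T:=\overline{g(\mathcal{P})}$. Since $q$ is generically a $\mathbb{P}^r$-bundle, $\dim\mathcal{P}=\dim X+r$, so $\dim T=\dim X+r$ and $\codim_Y T=r$. By the injectivity-on-general-fibres hypothesis a general fibre $q^{-1}(x)\cong\mathbb{P}^r$ maps isomorphically onto an $r$-dimensional subvariety $g(\mathbb{P}^r_x)\subset T$, and these subvarieties sweep out $T$.

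The engine of the proof will be the symplectic form $\sigma$ of $Y$. Resolving the indeterminacy of $g$ (legitimate because $g$ is defined in codimension one), $g^*\sigma$ is a holomorphic $2$-form that restricts to zero on every fibre of $q$, since those fibres are rational and carry no nonzero holomorphic $2$-form. As the general fibre of $q$ is rationally connected, I would invoke the standard descent for basic forms to conclude $g^*\sigma=q^*\beta$ for a holomorphic $2$-form $\beta$ on a smooth model of $X$. A rank count then shows $\beta$ is symplectic: at a general $z$ the map $g$ is an immersion, so $\rank(g^*\sigma)_z=\rank(\sigma|_{T_{g(z)}T})\geq \dim T-\codim_Y T=\dim X$, while $\rank\beta\leq\dim X$; hence $\rank\beta=\dim X$ and $\beta$ is nondegenerate. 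In particular $T$ is algebraically coisotropic with null leaves the $g(\mathbb{P}^r_x)$, matching the coisotropic picture dictated by $\dim Y=2r+\dim X$.

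With the identity $g^*\sigma=q^*\beta$ available, suppose for contradiction that some fibre $Z=g^{-1}(y_0)$ is positive-dimensional. For general $z\in Z$ one has $T_zZ\subset\ker dg_z$, so $T_zZ$ lies in the radical $\operatorname{rad}(g^*\sigma)_z$. Wherever $q$ is a submersion and $\beta$ is nondegenerate, this radical is precisely $\ker dq_z$, the vertical tangent space, because $\operatorname{rad}(q^*\beta)_z=\{u:\beta(dq_zu,\,dq_zv)=0\ \forall v\}=\ker dq_z$. Thus $dq$ vanishes along $Z$ and $Z$ sits inside a single fibre $\mathbb{P}^r_x$, so $g|_{\mathbb{P}^r_x}$ is not injective. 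For general $x$ this contradicts the injectivity-on-general-fibres hypothesis; hence the locus $\Sigma\subset X$ of such $x$ is a proper closed subset, and by the coisotropic structure each offending leaf $g(\mathbb{P}^r_x)$, being of dimension $<r$, must lie in $\Sing(T)$.

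The remaining task, eliminating the special fibres over $\Sigma$, is where the product structure enters: the Beauville--Bogomolov decomposition $\widetilde{X}=\Pi_{i\in I}M_i$ furnishes the slices $F_J$, on each of which $g$ is generically injective by hypothesis. The idea is that any component of $\Sigma$ can be reached through a slice $F_J$ adapted to the product directions, so that generic injectivity of $g|_{\mathcal{P}_{|F_J}}$ forces $g|_{\mathbb{P}^r_x}$ to be injective for $x$ lying on that slice, contradicting $x\in\Sigma$; ranging over all $J$ should exhaust $X$ and give $\Sigma=\emptyset$, so that $g$ has only finite fibres and is therefore finite onto its image. I expect this final step to be the genuine obstacle: the symplectic descent and the resulting coisotropic structure cleanly force every contracted locus to be vertical, but transferring the argument across $\Sing(T)$, the non-submersion locus of $q$, and the indeterminacy of $g$, and correctly matching the special vertical fibres against the slices $F_J$ coming from the decomposition of $X$, is the delicate part that the ``for all $J$'' hypothesis is designed to handle.
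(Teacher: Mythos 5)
Your first move is where the argument goes wrong. You read ``for all $J\subset I$'' as including $J=\emptyset$, so that $F_\emptyset=X$, $\mathcal{P}_{|F_\emptyset}=\mathcal{P}$, and the slice-wise hypothesis hands you generic injectivity of $g$ for free. Everything downstream --- $\dim T=\dim X+r$, $\codim_YT=r$, the immersion property at a general point, hence the rank bound $\rank(g^*\sigma)\geq \dim X$ that forces $\beta$ to be symplectic --- rests on this. But that reading cannot be the intended one: the proposition is invoked in Step IV of the proof of Theorem \ref{thm:contrazioni} to establish that $g$ is generically finite, on the strength of Steps I--III only (injectivity on $q$-fibres and generic injectivity on the slices coming from the factors of $X=\M\x S^{[\delta]}$); generic injectivity of $g$ on all of $\mathcal{P}$ is proved only afterwards, in Step VI, \emph{using} the output of Step IV. So the actual content of the proposition is to extract generic finiteness from the proper slices, and your proof assumes that content. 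If you drop the $J=\emptyset$ crutch and let $\alpha>0$ be the dimension of a general fibre of $g$, then $\codim_YT=r+\alpha$ and your rank inequality only yields $\rank(\sigma|_T)\geq\dim T-(r+\alpha)=\dim X-2\alpha$, which no longer forces $\beta$ to be nondegenerate, and the argument stalls. This is also why you never used the fifth hypothesis (that the image of $g$ is a component of the locus covered by the curves of class $[g(\ell)]$): the paper's proof needs it, via \cite[Thm. 4.4]{AV}, to show that the general fibre $F$ of the maximal rationally connected fibration of $T$ has dimension exactly $\codim_YT=r+\alpha$, so that $\tilde{g}^{-1}(F)$ has dimension $r+2\alpha$ while $q(\tilde{g}^{-1}(F))$ has dimension $2\alpha$; the degeneracy locus of the descended $2$-form then identifies $\overline{q(\tilde{g}^{-1}(F))}$ with some $F_J$, and generic injectivity of $g$ on $\mathcal{P}_{|F_J}$ contradicts $\alpha>0$. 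That mechanism is the missing idea in your write-up.

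The step you yourself flag as the ``genuine obstacle'' --- ruling out positive-dimensional fibres over the bad locus $\Sigma$ --- is indeed not salvageable as sketched: $\Sigma$ is just some proper closed subset with no reason to be a union of slices $F_J$, generic injectivity of $g|_{\mathcal{P}_{|F_J}}$ says nothing about the individual fibres $\mathbb{P}^r_x$ over special $x\in F_J$, and your identity $\operatorname{rad}(g^*\sigma)=\ker dq$ is only valid away from the non-submersion locus of $q$, the indeterminacy of $g$ and $\Sing T$ --- exactly where such special fibres would live. You should know, however, that the paper's own proof does not do better: it derives a contradiction from $\alpha>0$ where $\alpha$ is the \emph{general} fibre dimension, i.e.\ it proves generic finiteness, and only generic finiteness is used in Step IV. So the honest target is generic finiteness, and for that target the real gap is the one described in the previous paragraph, not this last step.
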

\begin{proof}
Let $T$ denote the closure of the image of $g$ and $h:\widetilde{T}\to
T$ be its desingularization.  We consider the maximal rationally
connected fibration
$\pi\,:\,\widetilde{T}\dashrightarrow B$ of $T$.   We denote by
$\tilde{g}:\mathcal{P}\dashrightarrow \widetilde{T}$ the rational map induced
by $g$ and assume that a general fibre of $g$ (or, equivalently, of
$\tilde{g}$) has dimension $\alpha$. By
\cite[Thm. 4.4]{AV} along with the equality $\dim Y=2r+\dim X$, a
general fibre $F$ of $\pi$ has dimension equal to $\codim_X
T=r+\alpha$ and
$\tilde{g}^{-1}(F)$ has dimension $r+2\alpha$. As $g$ is injective on a general
fibre of $q$, the locus $q(\tilde{g}^{-1}(F))$ is $2\alpha$-dimensional. 

Let $\sigma$ be a symplectic form on $Y$. As in \cite[Pf. of Thm.
4.4]{AV}, one shows that the form $h^*(\sigma|_T)$ is
degenerate precisely on the fibres of $\pi$, which are rationally
connected and hence have no $2$-forms. By definition of $\tilde{g}$, the
$2$-form $g^*(\sigma_{|T})$ coincides with $\widetilde{g}^*(h^*(\sigma_{|T}))$ where the latter is defined. Since $g^*(\sigma_{|T})$ is well-defined in
codimension one,  it extends to a $2$-form on $\mathcal{P}$ that is
degenerate along $\widetilde{g}^{-1}(F)$. On the other hand, any form on $\mathcal{P}$ is the pullback of a form on $X$ and forms on $X$ can be degenerate only along the $F_J$'s. Therefore, if $\alpha>0$, then the closure of $q(\tilde{g}^{-1}(F))$ coincides with $F_J$ for some $J\subset I$. This contradicts the injectivity of the restriction of $g$ to $\mathcal{P}_{|F_J}$.
\end{proof}

\section{Curves on symplectic surfaces and their pencils}\label{sec:quattro}

For a polarized surface $(S,L)$, we denote by 
$\{L\}$ the continuous system of $L$, that is, the connected component of $\Hilb(S)$ containing the linear system $|L|$. If $S$ is a $K3$ surface, then $|L|=\{L\}$. If $S$ is an abelian surface, then $\{L\}$ is obtained translating curves in $|L|$ by points of $S$. We denote by $V_{|L|,\delta}(S)$ (respectively, $V_{\{L\},\delta}(S)$) the Severi variety of $\delta$-nodal curves in $\{L\}$ (resp. $|L|$), and by $\{L\}^1_{\delta,d}$  (resp. $|L|^1_{\delta,d}$) the Brill-Noether locus parametrizing the nodal curves whose normalization carries a $g^1_d$. We recall \eqref{eq:defeps} and the following result.

\begin{thm} \label{thm:exist}
 Let $(S,L)$ be a general  polarized $K3$ or abelian surface of genus $p:=p_a(L)$. Let $\delta$ and $k$ be integers satisfying $0 \leq \delta \leq p-2\eps$ and $k+\eps \geq 2$.  Then the following hold:
  \begin{itemize}
\item [(i)] $\{L\}^1_{\delta,k+\eps} \neq \emptyset$ if and only if 
\begin{equation} \label{eq:boundA}
\delta \geq \alpha\Big(p-\delta-\eps-(k-1+2\eps)(\alpha+1)\Big), 
\end{equation}
where
\begin{equation} \label{eq:alpha}
\alpha= \Big\lfloor \frac{p-\delta-\eps}{2(k-1+2\eps)}\Big\rfloor; 
\end{equation}
\item [(ii)]  whenever nonempty, $\{L\}^1_{\delta,k+\eps}$ is equidimensional of dimension  $\min\{p-\delta,2(k-1+\eps)\}$ and a  general element in each component is an irreducible curve $C$ with normalization $\widetilde{C}$ of genus $g:=p-\delta$ such that $\dim G^1_{k+\eps}(\widetilde{C})=\max\{0,\rho(g,1,k+\eps)=2(k-1+\eps)-g\}$;
\item [(iii)]  there is at least one component $Y_{\delta,k+\eps}$ of $\{L\}^1_{\delta,k+\eps}$ where, for $C$ and $\widetilde{C}$ as in (ii), when $g \geq 2(k-1+\eps)$ (respectively $g < 2(k-1+\eps)$), any (resp. a general) $g^1_{k+\eps}$ on $\widetilde{C}$ has simple ramification and all nodes of $C$ are non-neutral with respect to it.
Furthermore, when $S$ is abelian, for general $C$ in this component the Brill-Noether variety $G^1_{k+1}(\widetilde{C})$ is reduced. 
\end{itemize}
\end{thm}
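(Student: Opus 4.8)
The plan is to treat the $K3$ case $\eps=0$ (following \cite{ck}) and the abelian case $\eps=1$ (following \cite{klm}) separately, the two being governed by identical numerology: a general member of a component of $V_{\{L\},\delta}$ has geometric genus $g=p-\delta$, and the relevant Brill--Noether number is $\rho(g,1,k+\eps)=2(k-1+\eps)-(p-\delta)$, whose sign distinguishes the two regimes in the dimension formula of (ii).

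The engine of the whole argument is the Lazarsfeld--Mukai correspondence. A base-point-free $g^1_{k+\eps}$ on the normalisation $\widetilde{C}$ of a $\delta$-nodal curve $C\in\{L\}$, pushed forward to $S$, produces a rank-two sheaf $E$ with $c_1(E)=L$ and $c_2(E)=k+\eps+\delta$ (the degree of the pencil plus the contribution of the $\delta$ nodes), and conversely such a bundle recovers the pencil. I would use this correspondence in both directions for statement (i). For sufficiency I would exhibit the bundles explicitly — either by degenerating $S$ to a surface on which the pencil is visible and then smoothing, or by constructing $E$ directly in a suitable component $\M$ of its moduli space — and read off the bound \eqref{eq:boundA}, with the floor quantity $\alpha$ of \eqref{eq:alpha} emerging from optimising over the admissible number of nodes. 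For necessity I would run the correspondence in reverse: a Clifford/Bogomolov-type analysis of $E$, whose (in)stability on the Picard-rank-one surface $S$ is controlled by $\alpha$, forces the same inequality. I expect this optimisation — matching the combinatorics of the nodes against the behaviour of $E$ — to be the principal obstacle.

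For the dimension statement (ii), the same correspondence identifies $\{L\}^1_{\delta,k+\eps}$, up to the choice of the pencil, with a family of Lazarsfeld--Mukai bundles, so its dimension is computed from the moduli of such $E$. When $\rho\geq 0$ the general normalisation already carries the pencil and the Brill--Noether locus fills up the Severi variety, of dimension $p-\delta$; when $\rho<0$ one cuts down by the expected codimension $-\rho$ and obtains dimension $2(k-1+\eps)$, giving the uniform value $\min\{p-\delta,2(k-1+\eps)\}$. The non-trivial input is that the upper bound is exactly the expected one, equivalently that the normalisations are Brill--Noether general; for $K3$ this rests on Lazarsfeld's theorem suitably adapted to normalisations of nodal curves, whereas for abelian surfaces no such classical statement is available, and establishing the requisite genericity is, I expect, the crux of the abelian case.

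Finally, for the refined properties in (iii) I would fix the component $Y_{\delta,k+\eps}$ coming from the explicit construction and argue by genericity. Simple ramification of the general (or, when $g\geq 2(k-1+\eps)$, of every) $g^1_{k+\eps}$ follows from a dimension count, pencils with a higher-order ramification point sweeping out a locus of strictly smaller dimension; non-neutrality of the nodes, i.e.\ that the two branches at each node have distinct images under the pencil, is likewise a codimension-one condition that can be avoided by moving within the family. The reducedness of $G^1_{k+1}(\widetilde{C})$ in the abelian case is the most delicate point: it amounts to smoothness of the Brill--Noether scheme, hence by the standard theory to injectivity of the Petri map, and I would expect this to require a dedicated Petri-type argument exploiting the geometry of $\widetilde{C}$ inside the abelian surface rather than any classical result.
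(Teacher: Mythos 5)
The first thing to say is that the paper does not prove this theorem at all: its entire proof is the citation ``This is \cite[Thm. 1.6]{klm} when $S$ is abelian and \cite[Thm. 0.1]{ck}, combined with \cite[Rem. 5.6]{klm}, when $S$ is $K3$.'' The statement is imported wholesale from earlier work of the authors, so any comparison must be against those papers rather than against this one. Measured against them, your outline correctly identifies the main tool (the Lazarsfeld--Mukai correspondence between pencils on normalizations and rank-two sheaves with $c_1=L$ and $c_2=k+\eps+\delta$), the correct division of labour in (i) (existence by explicit construction/degeneration, necessity by a vector-bundle argument on a Picard-rank-one surface), and the right dichotomy $\rho(g,1,k+\eps)\gtrless 0$ in (ii).

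As a proof, however, the proposal has genuine gaps: every technically substantive step is named and then deferred (``the principal obstacle'', ``the crux'', ``the most delicate point'') rather than carried out. Concretely: (a) you never derive the bound \eqref{eq:boundA}, and your heuristic for where $\alpha$ comes from (``optimising over the admissible number of nodes'') misses the actual mechanism --- as Remark \ref{rem:boundequiv} records, \eqref{eq:boundA} is equivalent to $\rho(p,l,(k+\eps)l+\delta)+\eps l(l+2)\geq 0$ for \emph{all} integers $l\geq 0$, with $\alpha$ in \eqref{eq:alpha} the minimising value of $l$; the necessity direction in \cite{ck,klm} produces from a $g^1_{k+\eps}$ on a (partial) normalization a whole family of higher-rank linear series on the genus-$p$ curve whose Brill--Noether numbers must all be controlled, and this family indexed by $l$ is invisible in your sketch. (b) The upper bound on $\dim\{L\}^1_{\delta,k+\eps}$ in (ii), i.e.\ that the normalizations are Brill--Noether general enough, is exactly the hard content of \cite[Thm. 0.1]{ck} and \cite[Thm. 1.6]{klm}; you assert it and flag it as the crux without supplying an argument. (c) The existence of the distinguished component $Y_{\delta,k+\eps}$ in (iii) with simple ramification, non-neutral nodes and, for $\eps=1$, reduced $G^1_{k+1}(\widetilde{C})$ is not a soft genericity statement obtained by ``moving within the family'': one must exhibit at least one curve with these properties, which the cited papers do by degenerating the surface itself, and the reducedness is a Petri-type injectivity that you acknowledge but do not prove. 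In short, the plan points at the right literature and the right tools, but it does not constitute an independent proof of any of (i)--(iii).
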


\begin{proof}
  This is \cite[Thm. 1.6]{klm} when $S$ is abelian and \cite[Thm. 0.1]{ck}, combined with \cite[Rem. 5.6]{klm}, when $S$ is $K3$.
\end{proof}

\begin{remark} \label{rem:boundequiv}
(i)  The condition \eqref{eq:boundA} is equivalent to
 \begin{equation}
    \label{eq:boundB}
\rho(p,l,(k+\eps)l+\delta)+\eps l(l+2) \geq 0 \; \; \mbox{for all integers $l \geq 0$}.   
  \end{equation}
Indeed, the left hand side of \eqref{eq:boundB} attains its minimum for $l=\alpha$ as in \eqref{eq:alpha} and \eqref{eq:boundA} is a rewrite of  \eqref{eq:boundB} with $l=\alpha$.
(ii) The condition \eqref{eq:boundA} is also {\it necessary} for the existence of a curve in $\{L\}$ with {\it partial normalization} of arithmetic genus $g:=p-\delta$ carrying a $g^1_{k+\eps}$. This follows from \cite[Thm. 5.9 and Rem. 5.11]{klm} in the abelian case and \cite[Thm. 3.1]{ck} in the $K3$ case, by remarking that the proofs go through replacing the normalization of the curve with a partial normalization, as remarked in \cite[Rem. 3.2(b)]{ck}. 
\end{remark}

Let $\gg$ be a linear series of type $g^r_{k+\eps}$ on the normalization $\widetilde{C}$ of a curve $C\subset S$, that is, $\gg=(A,V)$, where $A$ is a line bundle of degree $k+\eps$ and $V \sub H^0(A)$ is an $(r+1)$-dimensional subspace.
If $\gg$ is base point free, we have a natural rational map
\begin{equation}\label{hilb_map}
\iota_{\gg}:\mathbb{P}^r:=\mathbb{P}(V) \dashrightarrow  S^{[k+\eps]}
\end{equation}
obtained from the composition $\PP(V) \sub |A|\subset\Sym^{k+\eps}(\widetilde{C}) \rightarrow \Sym^{k+\eps}(C) \subset \Sym^{k+\eps}(S)$, whose image does not lie in the exceptional locus $\Delta_k$ of the Hilbert-Chow morphism. Thus, $\gg$ defines a rational $r$-fold inside the Hilbert scheme $S^{[k+\eps]}$. In particular, when $r=1$, we obtain a rational curve.

When $S$ is an abelian surface, the Albanese map $\Sigma_k$ restricted to the image of $\iota_{\gg}$ is constant, because otherwise we would get a rational curve in $S$. Therefore, up to translating the curve $C$, we may assume that \eqref{hilb_map} lands into the generalised Kummer variety $K^k(S)$. Let us now specialise to the case $r=1$, denote by $\nu:\widetilde{C} \to C$ the normalization, and let $R_{C,\nu_*\gg} \subset  S_{\eps}^{[k]}$ be the rational curve image of $\iota_{\gg}$, recalling \eqref{eq:defs}. (The same construction can be performed for any linear series on a partial normalization of $C$.)

\begin{lemma}\label{juveladra}
Let  $C \in \{L\}^1_{\delta,k+\eps}$ be a curve whose normalization  possesses a linear series $\gg$ of type $g^1_{k+\eps}$ with simple ramification and such that all nodes of $C$ are non-neutral with respect to it.

Then the class of the rational curve $R_{C,\nu_*\gg}$ in $H_2(S_{\eps}^{[k]},\mathbb{Z})$  with respect to the decomposition \eqref{eq:N1} is
\begin{equation}\label{class}
R_{p,\delta,k}:=L-(p-\delta+k-1+\eps)\mathfrak{r}_k
\end{equation}
and its dual divisor class is
\begin{equation}\label{classdual}
D_{p,\delta,k}:=L-\frac{(p-\delta+k-1+\eps)}{2(k-1+2\eps)}\mathfrak{e}_k,
\end{equation}
\end{lemma}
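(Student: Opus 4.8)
The plan is to compute the class $R_{C,\nu_*\gg}$ directly by intersecting it with a basis of $H^2(S_\eps^{[k]},\ZZ)$ and then use the lattice duality of \eqref{eq:N1} to extract the divisor class. In the decomposition $N_1(S_\eps^{[k]}) \cong N_1(S) \oplus \ZZ[\mathfrak{r}_k]$, I may write $R_{C,\nu_*\gg} = \gamma - m\, \mathfrak{r}_k$ for some class $\gamma \in N_1(S)$ and some integer $m$, and the whole task reduces to identifying $\gamma$ and $m$. First I would pair $R_{C,\nu_*\gg}$ against divisor classes coming from $S$ via \eqref{imm3}: given a prime divisor $D$ on $S$, the associated divisor on $S_\eps^{[k]}$ meets a curve $\{Z : \Supp(Z)\cap \Gamma \neq \emptyset,\ \{x_1,\dots,x_{k+\eps-1}\}\subset \Supp(Z)\}$ in $D\cdot \Gamma$ points, matching the intersection pairing on $S$. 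Since our curve $R_{C,\nu_*\gg}$ is swept out by the length-$(k+\eps)$ divisors of the pencil $\gg$ on $C\in |L|$, the ``horizontal'' component $\gamma$ should equal the image of $[L]$ under \eqref{imm2}; I would verify this by noting that as the point of $\PP(V)=\PP^1$ varies, the support traces out the curve $C$ itself, so the projection to $S$ of the incidence correspondence recovers the class $L$.

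Next I would compute the coefficient $m$ of $\mathfrak{r}_k$, which is the heart of the calculation. The cleanest route is to pair against $\mathfrak{e}_k$ (equivalently, to intersect the image rational curve with the exceptional divisor $\Delta_k = 2\mathfrak{e}_k$) and use $q(\mathfrak{e}_k) = -2(k-1+2\eps)$ together with $\mathfrak{r}_k = \mathfrak{e}_k/2(k-1+2\eps)$. The number $b(R_{C,\nu_*\gg}, \mathfrak{e}_k)$ counts, with multiplicity, the points of the pencil that become non-reduced, i.e.\ the ramification of $\gg$ together with the contribution of the nodes of $C$. Here the hypotheses of the lemma enter decisively: the simple ramification of $\gg$ contributes via Riemann–Hurwitz, giving $2(g - 1) + 2(k+\eps)$ branch points where $g = p_a(\widetilde{C}) = p - \delta$ (using that $\deg A = k+\eps$ and $\widetilde C$ has genus $p-\delta$), while the assumption that all $\delta$ nodes are non-neutral means that each node forces the two preimage points to map to the same point of $S$, contributing to the intersection with $\Delta_k$ in a controlled way. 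Assembling these contributions should yield the coefficient $p-\delta+k-1+\eps$ asserted in \eqref{class}.

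The main obstacle I anticipate is the careful bookkeeping of the two distinct sources of intersection with $\Delta_k$: the ramification points of the pencil on the smooth model $\widetilde{C}$, and the images of the $\delta$ nodes of $C$. One must be precise about how the rational map $\iota_\gg$ of \eqref{hilb_map} behaves near a node — since a fibre of the pencil through a node yields a length-$(k+\eps)$ subscheme of $S$ that is non-reduced exactly when two of its points collide on $S$, and the non-neutrality hypothesis guarantees this happens in the generic expected way rather than being absorbed. Getting the multiplicities right (each simple ramification point contributing $1$, and correctly accounting for the $\delta$ nodes) is where the stated hypotheses ``simple ramification'' and ``all nodes non-neutral'' are used in an essential way, and a miscount would shift the coefficient. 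I would therefore organize the computation so that the Riemann–Hurwitz count and the nodal count are handled separately and then summed.

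Finally, having established \eqref{class}, the dual divisor class \eqref{classdual} follows by a purely lattice-theoretic manipulation. Using the identification $H_2(S_\eps^{[k]},\ZZ)\hookrightarrow H^2(S_\eps^{[k]},\QQ)$ induced by $b(\,,\,)$, and the facts that $b(\,,\,)$ restricts to cup product on the $H^2(S,\ZZ)$ summand (so that $L$ is self-dual there) while $\mathfrak{r}_k = \mathfrak{e}_k/2(k-1+2\eps)$ is dual to $\mathfrak{e}_k$, I would simply rewrite $R_{p,\delta,k} = L - (p-\delta+k-1+\eps)\mathfrak{r}_k$ as a rational cohomology class and read off its representative in $H^2$; the coefficient $(p-\delta+k-1+\eps)/2(k-1+2\eps)$ in front of $\mathfrak{e}_k$ emerges immediately from substituting $\mathfrak{r}_k = \mathfrak{e}_k/2(k-1+2\eps)$. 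This last step is routine once the first is in place.
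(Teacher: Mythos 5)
Your overall strategy is sound and is essentially the computation behind \cite[Lemma 2.1]{ck}, which is all the paper itself offers (the proof in the text is a one-line citation to that lemma, with the remark that the abelian case is similar); so a self-contained calculation along your lines is a legitimate and arguably more informative route. The identification of the horizontal component with $L$ via the incidence correspondence is fine, and the final duality step converting \eqref{class} into \eqref{classdual} is indeed routine.

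The one genuine problem is your bookkeeping of the intersection with $\Delta_k$. You write that each non-neutral node ``forces the two preimage points to map to the same point of $S$, contributing to the intersection with $\Delta_k$ in a controlled way,'' and you propose to sum a Riemann--Hurwitz term and a nodal term. But the arithmetic leaves no room for any nodal contribution: with $g=p-\delta$, Riemann--Hurwitz gives exactly $2g-2+2(k+\eps)=2(p-\delta+k-1+\eps)$ simple branch points, each meeting $\Delta_k=2\mathfrak{e}_k$ transversally with multiplicity one, so $b(R_{C,\nu_*\gg},\mathfrak{e}_k)=\tfrac12 R\cdot\Delta_k=p-\delta+k-1+\eps$ already accounts for the entire coefficient. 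The role of the non-neutrality hypothesis is precisely the opposite of what you suggest: it guarantees that the two preimages $x,y$ of a node lie in \emph{distinct} members of the pencil, so no member of $\gg$ pushes forward to a cycle that is non-reduced at a node, and the nodes contribute \emph{zero} to $R\cdot\Delta_k$. (If a node were neutral, the unique member containing both $x$ and $y$ would map into $\Delta_k$ and spoil the count; this is exactly what the hypothesis rules out.) Likewise ``simple ramification'' ensures each branch point counts with multiplicity one. With that correction --- nodes contribute nothing, ramification contributes everything --- your argument closes.
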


\begin{proof}
In the $K3$ case, this is \cite[Lemma 2.1]{ck}. The proof in the abelian case is similar. 
\end{proof}

In particular, one has (with $\alpha$ as in \eqref{eq:alpha}): 
\begin{equation} \label{eq:squarecurve}
q(R_{p,\delta,k})= 2(p-1)-\frac{(p-\delta+k-1+\eps)^2}{2(k-1+2\eps)}.
\end{equation}
Observe that, for fixed values of $k$ and $p$, the minimum in \eqref{eq:squarecurve}, as well as the maximal ``slope'' $p-\delta+k-1+\eps$ of the class $R_{p,\delta,k}$, is reached for a curve with the minimal number of nodes.

\begin{remark} \label{rem:cile1} 
Under the same hypotheses as in Lemma \ref{juveladra}, one may rewrite \eqref{class} as 
\[
q(R_{p,\delta,k})= 2\Big(\rho+\eps\alpha(\alpha+2)+ \eps-1\Big)-\frac{\beta^2}{2(k-1+2\eps)},
\]
with
\[ \rho:=\rho(p,\alpha,(k+\eps)\alpha+\delta) \; \; \mbox{and} \; \;
  \beta:= (2\alpha+1)(k-1+2\eps) -p+\delta+\eps.
\]
In particular, we have $-(k-1+2\eps) < \beta \leq k-1+2\eps$, and \eqref{eq:boundA}, or equivalently \eqref{eq:boundB} with $l=\alpha$, says that $\rho+\eps\alpha(\alpha+2) \geq 0$. From these inequalities one reobtains the bound from Proposition \ref{prop:bound}:
\[ q(R_{p,\delta,k}) \geq -\frac{k+3-2\eps}{2}, \]
with equality if and only if
\[ p=\alpha(\alpha+1)(k-1+2\eps)+\eps \; \; \mbox{and} \; \;  \delta=\alpha(\alpha-1)(k-1+2\eps)\]
(see also \cite[Cor. 3.4]{ck}).
\end{remark}

Proposition \ref{prop:bound} yields the following extension of \cite[Cor. 8.6]{ck} to Kummer manifolds.

\begin{cor}\label{cor:cile2} 
 Assume that $\NS(S) \cong \mathbb{Z}[L]$. Let $n\in \mathbb{Z}^{>0}$ and set  
$p:=n(n+1)(k-1+2\eps)+\eps$ and $\delta:=n(n-1)(k-1+2\eps)$.
Then the rational curves in $S_{\eps}^{[k]}$ obtained from the component $Y_{\delta,k}$ of Theorem \ref{thm:exist} generate extremal rays of $S_{\eps}^{[k]}$.
\end{cor}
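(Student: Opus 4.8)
The plan is to reduce everything to the Picard rank $2$ situation and play the effectivity of our curves against the lower bound of Proposition \ref{prop:bound}. First I would record that, since $\NS(S)\cong\mathbb{Z}[L]$, the decomposition \eqref{eq:pic} gives $\Pic(S_\eps^{[k]})\cong\mathbb{Z}[L]\oplus\mathbb{Z}[\mathfrak{e}_k]$, so $N_1(S_\eps^{[k]})\cong\mathbb{Z}[L]\oplus\mathbb{Z}[\mathfrak{r}_k]$ has rank $2$ and the Mori cone $\overline{\NE}(S_\eps^{[k]})$ is a two-dimensional cone. I will use coordinates $(x,y)$ for the class $xL+y\mathfrak{r}_k$. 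Since the divisor \eqref{imm3} attached to the ample class $L$ is nef and pairs with $xL+y\mathfrak{r}_k$ to give $2x(p-1)$, every effective curve has $x\geq 0$; the locus $\{x=0\}$ meets $\overline{\NE}$ exactly along the ray spanned by the Hilbert--Chow fibre class $\mathfrak{r}_k$, which is therefore one of the two extremal rays.

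Next I would identify the curves of the statement. For the chosen $p=n(n+1)(k-1+2\eps)+\eps$ and $\delta=n(n-1)(k-1+2\eps)$ one checks that $\alpha=n$ in \eqref{eq:alpha}, so Theorem \ref{thm:exist} produces the component $Y_{\delta,k}$, whose pencils have simple ramification and non-neutral nodes; hence by Lemma \ref{juveladra} the associated rational curves have class $R_{p,\delta,k}=L-t\,\mathfrak{r}_k$ with $t:=p-\delta+k-1+\eps$. This is precisely the equality case of Remark \ref{rem:cile1}, so $q(R_{p,\delta,k})=-(k+3-2\eps)/2$, the minimal value allowed by Proposition \ref{prop:bound}. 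Since these curves are genuinely effective, $R_{p,\delta,k}\in\overline{\NE}(S_\eps^{[k]})$; it is primitive (its $L$-coefficient is $1$) and, having $x=1>0$, it is not proportional to $\mathfrak{r}_k$. Thus $R_{p,\delta,k}$ is either interior to the Mori cone or lies on its second extremal ray, and it suffices to rule out the former.

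The core of the argument is then a short quadratic estimate. Assume $R_{p,\delta,k}$ is interior and let $\rho_2=mL-s\mathfrak{r}_k$ (primitive, $m\geq 1$, $s>0$) be the primitive generator of the second extremal ray. Writing $R_{p,\delta,k}$ as a strictly positive combination of $\mathfrak{r}_k$ and $\rho_2$ forces the slope inequality $s/m>t$. Using $q(L)=2(p-1)$, $b(L,\mathfrak{r}_k)=0$ and $q(\mathfrak{r}_k)=-1/(2(k-1+2\eps))$ I compute
\[
q(\rho_2)=2m^2(p-1)-\frac{s^2}{2(k-1+2\eps)}<m^2\Bigl(2(p-1)-\frac{t^2}{2(k-1+2\eps)}\Bigr)=m^2\,q(R_{p,\delta,k}),
\]
where the strict inequality is exactly $s^2>m^2t^2$. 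Since $m^2\geq 1$ and $q(R_{p,\delta,k})=-(k+3-2\eps)/2<0$, this yields $q(\rho_2)<-(k+3-2\eps)/2$, contradicting Proposition \ref{prop:bound} applied to the primitive extremal generator $\rho_2$. Therefore $R_{p,\delta,k}$ lies on the second extremal ray, and the curves obtained from $Y_{\delta,k}$ generate an extremal ray, as claimed.

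I expect the only genuinely delicate point to be the bookkeeping ensuring that effective classes satisfy $x\geq 0$ and that $\overline{\NE}$ meets $\{x=0\}$ precisely in $\mathbb{R}_{\geq 0}\mathfrak{r}_k$, so that $R_{p,\delta,k}$ cannot accidentally sit on the fibre extremal ray; everything else is the quadratic comparison above, which is forced by the minimality in Proposition \ref{prop:bound}. The case $\eps=1$ is handled uniformly, since Proposition \ref{prop:bound} and Remark \ref{rem:cile1} are already stated for both values of $\eps$, and this is exactly what upgrades \cite[Cor. 8.6]{ck} from $K3^{[k]}$ type to Kummer type.
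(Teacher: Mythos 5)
Your proposal is correct and is essentially the argument the paper intends: Corollary \ref{cor:cile2} is presented as an immediate consequence of Proposition \ref{prop:bound} together with the equality case of Remark \ref{rem:cile1}, which is exactly the quadratic comparison you carry out in the rank-two lattice $N_1(S_{\eps}^{[k]})\cong\ZZ[L]\oplus\ZZ[\mathfrak{r}_k]$. The one point to make airtight is that you posit the second boundary ray of $\overline{\NE}(S_{\eps}^{[k]})$ in the integral form $mL-s\mathfrak{r}_k$ from the outset, whereas a priori it could be irrational; this is repaired by first noting (via your same inequality, applied to any real class on or below the ray through $R_{p,\delta,k}$) that this part of the Mori cone lies in the region of negative square, where it is locally a finite rational polyhedron by \cite[Cor. 18]{HT2} as recalled before Proposition \ref{prop:wall_vs_mbm}, so a primitive integral generator exists and Proposition \ref{prop:bound} applies to it.
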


We also have:

\begin{proposition} \label{prop:andreas010715}
  The rational curves  in $S_{\eps}^{[k]}$ obtained from any component of the relative Brill-Noether variety $\G^1_k(\{L\}^1_{\delta,k+\eps})$ parametrizing pairs $(C,\gg)$ such that $C\in \{L\}^1_{\delta,k+\eps}$ and $\gg$ is a $g^1_{k+\eps}$ on the normalization of $C$ move in a family of rational curves of dimension precisely $2k-2$.

Any small deformation $X_t$ of $X_0=S_{\eps}^{[k]}$ keeping the class of the rational curves algebraic  contains a $(2k-2)$-dimensional family of rational curves that are deformations of the rational curves in $S_{\eps}^{[k]}$. 
\end{proposition}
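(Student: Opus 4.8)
The plan is to prove the two assertions separately: first the exact dimension count inside $X_0 = S_\eps^{[k]}$, then its persistence under deformation. For the dimension count I would read it off Theorem \ref{thm:exist}. By part (ii), $\{L\}^1_{\delta,k+\eps}$ is equidimensional of dimension $\min\{p-\delta,\,2(k-1+\eps)\}$, while for a general $C$ in any of its components the fibre $G^1_{k+\eps}(\widetilde{C})$ has dimension $\max\{0,\rho\}$ with $\rho=2(k-1+\eps)-(p-\delta)$. Adding the two contributions gives $2(k-1+\eps)$ in both regimes $\rho\ge 0$ and $\rho\le 0$, so every component of $\G^1_k(\{L\}^1_{\delta,k+\eps})$ has dimension exactly $2(k-1+\eps)=2k-2+2\eps$. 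I would then check that $(C,\gg)\mapsto R_{C,\nu_*\gg}$ is generically injective onto its image in the space of curves of $S_\eps^{[k]}$: since $\iota_\gg(\PP^1)\not\subset\Delta_k$, the closure of the union of the supports of the length-$(k+\eps)$ subschemes parametrized by the curve recovers $C$, and $\gg$ is then recovered as the induced pencil. For $\eps=0$ this already yields the dimension $2k-2$.

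In the abelian case I must correct for the $2$-dimensional translation ambiguity, which accounts for the extra $2\eps$. Because all divisors in a pencil $\gg$ are linearly equivalent on $\widetilde{C}$, their images sum to a single point of $S$; hence $\Sigma_k(\iota_\gg(\PP^1))$ is one point and defines a morphism $a:\G^1_k(\{L\}^1_{\delta,k+1})\to S$. The translation action of $S$ on the continuous system $\{L\}$ lifts to $\G^1_k$ and satisfies $a(t_x\cdot(C,\gg))=a(C,\gg)+(k+1)x$; since multiplication by $k+1$ is an isogeny, $a$ is surjective with general fibre of dimension $2k-2$. The rational curves that land in $K^k(S)=\Sigma_k^{-1}(0)$ are exactly those parametrized by $a^{-1}(0)$, a family of dimension $2k-2$, as claimed.

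For the deformation statement I would work over the Kuranishi space $\Def(X_0)$ and let $U\subset\Def(X_0)$ be the smooth, codimension-one locus along which $R_{p,\delta,k}$ stays of type $(1,1)$. Writing $\mathcal{X}_U\to U$ for the restricted family, I consider the relative Hilbert (or Chow) scheme $\mathcal{R}\to U$ of rational curves of class $R_{p,\delta,k}$ contained in fibres — any such curve lies in a single fibre since the base $U$ carries no rational curves — and the component $\mathcal{Z}$ meeting the distinguished $(2k-2)$-dimensional family $Z\subset\mathcal{R}_0$ of the first part. Deformation theory in the total space gives the lower bound $\dim\mathcal{Z}\ge\chi(N_{R/\mathcal{X}_U})=(2k-3)+\dim U$, using $0\to N_{R/X_0}\to N_{R/\mathcal{X}_U}|_R\to\O_R\otimes T_0U\to 0$ and $\chi(N_{R/X_0})=2k-3$. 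On the other hand \cite[Thm. 4.4]{AV} gives a matching upper bound: the curves in any fibre $\mathcal{Z}_t$ sweep out a uniruled, hence coisotropic, subvariety of $X_t$, and coisotropy together with minimality of the covering curves forces $\dim\mathcal{Z}_t\le 2k-2$. The point to establish is that $\mathcal{Z}\to U$ is dominant with general fibre of dimension exactly $2k-2$.

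The main obstacle is precisely this dominance. The expected dimension $2k-3$ falls one short of the actual $2k-2$ — equivalently $h^1(N_{R/X_0})\ge 1$, so the curves are obstructed inside a fixed fibre — and a pure dimension count is consistent with $\mathcal{Z}$ mapping onto a hypersurface in $U$ rather than all of $U$. To close the gap I would show that the curves genuinely deform in every algebraic direction, i.e.\ that the connecting map $T_0U\to H^1(N_{R/X_0})$ attached to the sequence above vanishes. This is a symplectic semiregularity statement: the symplectic form identifies $T_{X_0}\cong\Omega_{X_0}$, and along the isotropic curve $R$ it pairs the obstructions against the restriction of the Kodaira--Spencer class, so that the allowed directions $T_0U=[R]^\perp\cap H^{1,1}(X_0)$ lie in the kernel. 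Alternatively, one may invoke the deformation results of Ran \cite{ran} and \cite[Prop. 3]{BHT}, which guarantee that the class remains represented by an effective rational curve along $U$; combined with the two dimension bounds this produces, in each $X_t$, a $(2k-2)$-dimensional family of rational curves deforming those of $S_\eps^{[k]}$.
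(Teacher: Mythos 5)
Your computation of the dimension of the family you \emph{construct} is fine: $\dim \G^1_k(\{L\}^1_{\delta,k+\eps})=2(k-1+\eps)$ by Theorem \ref{thm:exist}, generic injectivity of $(C,\gg)\mapsto R_{C,\nu_*\gg}$, and the Albanese/translation correction by $2\eps$ in the abelian case all check out. But this only gives the \emph{lower} bound: the statement asserts that these curves move in a family of dimension \emph{precisely} $2k-2$, i.e.\ that the maximal family of rational curves in $S_\eps^{[k]}$ containing yours is no bigger. A priori your curves could deform to rational curves not of the form $R_{C',\nu_*\gg'}$ with $(C',\gg')\in\G^1_k(\{L\}^1_{\delta,k+\eps})$ -- for instance curves arising from pencils on \emph{partial} normalizations of members of $\{L\}$ with more or worse singularities. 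The paper closes this by projecting an arbitrary containing family through the incidence variety \eqref{eq:incidenzafiga} to a family of pairs (curve in $\{L\}$, pencil of degree $k+\eps$ on a partial normalization) and invoking the dimension bound of \cite[Thm.~5.3]{klm} (the ``necessary'' direction of the Brill--Noether statements, cf.\ Remark \ref{rem:boundequiv}(ii)). You never establish this upper bound, and it does not follow from Theorem \ref{thm:exist}(ii), which only describes the nodal locus and its general member.

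This gap is not cosmetic, because it is exactly what your second half needs. You correctly identify that the issue is dominance of $\mathcal{Z}\to U$, and your proposed remedies are either an uncarried-out semiregularity argument or a citation of Ran and \cite[Prop.~3]{BHT} -- which is what the paper does -- but the hypothesis of \cite[Cor.~3.2--3.3]{ran} is precisely that the family has the minimal (expected) dimension $2k-2$ on the special fibre. If the family were larger, the curves would be excessively obstructed in the fixed fibre and the total-space dimension count would no longer force dominance over $U$. So the argument as written is circular at the decisive point: the upper bound you skip in part one is the input that makes part two run. To repair the proof, insert the incidence-variety projection and the cited bound on the space of pairs (curve in $\{L\}$, $g^1_{k+\eps}$ on a partial normalization); after that, your route and the paper's coincide.
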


\begin{proof}
Any irreducible family of rational curves in $S_{\eps}^{[k]}$ containing our family yields, by the incidence \eqref{eq:incidenzafiga}, a family of pairs $(C,\gg)$ with $C \in \{L\}$ and $\gg$ a linear series of type $g^1_{k+\eps}$ on the normalization of $C$. By 
\cite[Thm. 5.3]{klm}, the rational curves will therefore move in a family of dimension precisely $2k-2$, which is the expected dimension of any family of rational curves on a $(2k)$-dimensional IHS manifold  \cite[Cor. 5.1]{ran}. Hence, 
as a consequence of \cite[Cor. 3.2-3.3]{ran}, the rational curves will deform to any $X_t$ as in the statement, cf. \cite[Pf. of Prop. 3]{BHT}. 
\end{proof}

\section{Examples of wall divisors}\label{sec:cinque}

Let $(S,L)$ be a general abelian or $K3$ surface, and  fix integers $p$, $k$ and $\delta$ satisfying \eqref{eq:boundA}. Let $R_{p,\delta,k}$ be as in \eqref{class} and denote by $D_{p,\delta,k}$ its dual (class) divisor.

\begin{thm}\label{thm:muri_curve_doppio}
The divisor $D_{p,\delta,k}$ is a wall divisor if and only if $q(R_{p,\delta,k})<0$.

\end{thm}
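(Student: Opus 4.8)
The plan is to characterize when $D_{p,\delta,k}$ is a wall divisor by invoking the lattice-theoretic criterion of Theorem \ref{thm:muri}. Since wall divisors are by definition required to have negative square, the ``only if'' direction is immediate: if $D_{p,\delta,k}$ is a wall divisor then $q(D_{p,\delta,k})<0$, and because $R_{p,\delta,k}$ is a positive rational multiple of the dual divisor, we have $q(R_{p,\delta,k})<0$ as well (the sign of $q$ is preserved under lattice duality). So the substance of the proof lies entirely in the ``if'' direction: assuming $q(R_{p,\delta,k})<0$, I must produce a class $s$ in the saturated lattice $T$ generated by $v=(1,0,1-2\eps-k)$ and $D_{p,\delta,k}$ satisfying condition (i) or (ii) of Theorem \ref{thm:muri}.

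The key computational step is to identify the lattice $T$ explicitly and exhibit a natural candidate for $s$. Using the Hodge isometry of Remark \ref{rem:hodge}, under which $\mathfrak{e}_k \mapsto (1,0,k-1+2\eps)$ and $H^2(S,\ZZ)$ maps to itself, I would write $D_{p,\delta,k}=L-\frac{(p-\delta+k-1+\eps)}{2(k-1+2\eps)}\mathfrak{e}_k$ as an element of $\Lambda=H^{2*}(S,\ZZ)$. The most promising candidate for $s$ is built from the Lazarsfeld-Mukai bundle construction underlying these curves: the Mukai vector of the rank-$2$ Lazarsfeld-Mukai sheaf associated with the pushforward of a $g^1_{k+\eps}$, namely something close to $(2,L,\chi+2(\eps-1))$ with $\chi=p-\delta-k+3-5\eps$ appearing throughout the paper. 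Concretely, I expect $s$ to be the primitive class proportional to $b(v,v)D_{p,\delta,k}+b(v,D_{p,\delta,k})v$ — essentially the orthogonal projection manipulation that places $s$ and $v-s$ symmetrically inside $T$. I would then verify that $q(s)\geq 0$, $b(s,v)>0$, that $q(v-s)\geq 0$ and $b(v-s,v)>0$ (equivalently that both $a:=s$ and $b:=v-s$ lie in the positive cone of $T$), and finally the bound $b(s,v)\leq (q(v)+q(s))/2$, which is precisely the inequality that the proof of Proposition \ref{prop:bound} already extracts from $b(s,v)\leq (q(v)+q(s))/2$.

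The main obstacle I anticipate is ensuring that the candidate $s$ actually lands in the \emph{saturated} lattice $T$ (i.e. is integral after dividing by the appropriate greatest common divisor) and satisfies the \emph{strict} inequalities demanded in case (i), rather than landing on a boundary that only meets case (ii). Here the divisibility relations \eqref{ercapitano}, namely $\frac{v+\mathfrak{e}_k}{2}\in\Lambda$ and $\frac{v-\mathfrak{e}_k}{2(k-1+2\eps)}\in\Lambda$, should be the crucial tool for checking integrality. I would translate the hypothesis $q(R_{p,\delta,k})<0$, via the explicit formula \eqref{eq:squarecurve}, into a numerical inequality on $p,\delta,k,\eps$ and then show this is exactly equivalent to the positivity condition $0\leq q(s)<b(s,v)$ together with the upper bound on $b(s,v)$. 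Since the arithmetic in Remark \ref{rem:cile1} and the proof of Proposition \ref{prop:bound} already packages $q(R_{p,\delta,k})$ in terms of $q(v)$, $q(s)$ and $b(s,v)$, I expect the verification to reduce to algebraic identities among these quantities; the delicate point will be treating the $\eps=0$ versus $\eps=1$ cases separately, since for $\eps=0$ the extra option (ii) with $q(s)=-2$ may be needed to capture the boundary cases that (i) misses, whereas for $\eps=1$ only (i) is available and one must check the strict inequality $q(s)<b(s,v)$ survives.
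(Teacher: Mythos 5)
Your ``only if'' direction is fine: wall divisors have negative square by definition, and $q(R_{p,\delta,k})=q(D_{p,\delta,k})/\div(D_{p,\delta,k})^2$ has the same sign. The ``if'' direction, however, has a genuine gap, and it is not the route the paper takes. Two concrete problems. First, your explicit candidate, the primitive class proportional to $b(v,v)D_{p,\delta,k}+b(v,D_{p,\delta,k})v$, collapses: $D_{p,\delta,k}$ lies in $H^2(S^{[k]}_\eps,\ZZ)\cong v^\perp$, so $b(v,D_{p,\delta,k})=0$ and your candidate is just a multiple of $D_{p,\delta,k}$ itself; it has $b(s,v)=0$ and therefore violates the strict inequality $q(s)<b(s,v)$ of Theorem \ref{thm:muri}(i), while (ii) could only apply in the single case $\eps=0$, $q(D_{p,\delta,k})=-2$. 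Second, the more natural candidate you gesture at --- the Mukai vector of the Lazarsfeld--Mukai bundle $(2,c_1(L),\chi+2(\eps-1))$ --- is essentially the element $w$ computed in Remark \ref{rem:cacca_doppio}, with $q(w)=2\delta-2+2\eps$ and $b(w,v)=g-k+1-3\eps$, and the paper states there explicitly that $w$ does \emph{not} in general satisfy (i) or (ii), only in special cases such as Example \ref{mitico}. Since $T$ is a rank-two indefinite lattice, deciding whether \emph{some} $s\in T$ satisfies the inequalities is a representation problem for a binary quadratic form, and showing that $q(R_{p,\delta,k})<0$ is equivalent to that existence statement is not a matter of unwinding ``algebraic identities''; you have not supplied the arithmetic that would close this, and the paper's own remark is evidence that the obvious candidates fail.

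The paper's proof bypasses the lattice criterion entirely. By Proposition \ref{prop:andreas010715}, the rational curves of class $R_{p,\delta,k}$ move in a family of dimension exactly $2k-2$ and persist in every small deformation of $S^{[k]}_\eps$ keeping the class algebraic. On a very general such deformation $(X_t,R_t)$ the class $R_t$ spans $N_1(X_t)$ and is effective, hence extremal; since it has negative square, its dual divisor is a wall divisor by (the converse direction of) Proposition \ref{prop:wall_vs_mbm}; and wall divisors are deformation invariant, so $D_{p,\delta,k}$ is a wall divisor on $S^{[k]}_\eps$. To salvage your plan you would either have to carry out the genuinely nontrivial arithmetic existence argument for $s\in T$, or switch to this geometric deformation argument.
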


\begin{proof}
By Proposition \ref{prop:andreas010715}, the family of rational curves with class $R_{p,\delta,k}$ has a component of dimension $2k-2$ and deforms in all small deformations $X_t$ of $S^{[k]}_\eps$ where the class $R_{p,\delta,k}$ remains algebraic. Let $(X_t,R_t)$ be a very general such deformation. The class $R_t$ spans $N_1(X_t)$, hence it is extremal. As it has negative square, its dual is a wall divisor. Since wall divisors are invariant under deformation, $D_{p,\delta,k}$ is a wall divisor on $S^{[k]}_\eps$, too. 
\end{proof}

\begin{remark} \label{rem:cacca_doppio}
If $D_{p,\delta,k}$ is a wall divisor, we can recover the lattice $T$ associated with it in Theorem \ref{thm:muri}. Set $a:=\GCD(2k-2+4\eps,g+k-1+\eps)$, $ab:=g+k-1+\eps$ and $ac:=2k-2+4\eps$. The saturation of the lattice generated by $v$ and $D_{p,\delta,k}$ is $T:=\langle v,w\rangle$, where $w=\frac{b}{c}(v-\mathfrak{e}_k)+L-v$. Note that $q(w)=2\delta-2+2\eps$ and $b(w,v)=g-k+1-3\eps$. The element $w$ does not necessarily satisfy the inequalities (i) or (ii) in Theorem \ref{thm:muri} for $s$. However, this occurs in some special cases, e.g., in the examples below. 
\end{remark}

\begin{example}\label{mitico}
Let $p=2k-2+5\eps$ and $\delta=0$. Then $q(R_{p,\delta,k})=-\frac{k+3-2\eps}{2}$ and the lattice $T$ associated with $R_{p,\delta,k}$ is isometric to 
\begin{footnotesize} $\left( \begin{array}{cc} -2+2\eps & k-1+2\eps\\ k-1+2\eps & 2k-2+4\eps \end{array}\right)$\end{footnotesize}, cf. Remark \ref{rem:cacca_doppio}.
\end{example}
 
\begin{example} 
Let $p=2k-2+5\eps-a$, $a\leq k-1+2\eps$, and $\delta=0$. Then $q(R_{p,\delta,k})<0$ and the lattice $T$ associated with $R_{p,\delta,k}$ is isometric to \begin{footnotesize} $\left( \begin{array}{cc} -2+2\eps & k-1+2\eps-a\\ k-1+2\eps-a & 2k-2+4\eps \end{array}\right)$\end{footnotesize}, cf. Remark \ref{rem:cacca_doppio}.  
\end{example}

\begin{example}
Let $p=2k-2+5\eps$ and $0 \leq \delta \leq \frac{k-1+2\eps}{2}$. Then $q(R_{p,\delta,k})<0$ and the lattice $T$ associated with $R_{p,\delta,k}$ is isometric to \begin{footnotesize} $\left( \begin{array}{cc} 2\delta-2+2\eps & k-1+2\eps\\ k-1+2\eps & 2k-2+4\eps \end{array}\right)$\end{footnotesize}, cf. Remark \ref{rem:cacca_doppio}.  
\end{example}

\begin{prop}\label{prop:tutti_muri}
Let $k\geq 2$ be an integer and set $\eps=0$ (respectively, $\eps=1$). Let $v:=(1,0,1-2\eps-k)$ and let $s\in\Lambda=U^{\oplus 4}\oplus E_8(-1)^{\oplus2-2\eps}$ be an element satisfying the inequalities (i) or (ii) in Theorem \ref{thm:muri}. Let $T=\langle v,s\rangle$. 
Then there exists a primitively polarized $K3$ (resp. abelian) surface $(S,L)$ of genus $p$ and an integer $0\leq \delta\leq p-2\eps$ such that $p,\delta, k$ satisfy \eqref{eq:boundA} and the following hold:
\begin{itemize}
\item[(a)]the divisor $D_{p,\delta,k}$ is a wall divisor;
\item[(b)] the saturation of the lattice generated by $v$ and $D_{p,\delta,k}$ in $\Lambda$ is isometric to $T$.
\end{itemize}    
\end{prop}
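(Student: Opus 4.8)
The plan is to recover the pair $(p,\delta)$ directly from the two lattice invariants $q(s)$ and $b(s,v)$ of $s$, matching them to the invariants of the generator $w$ of $T$ computed in Remark~\ref{rem:cacca_doppio}. Since $\Lambda$ is an even lattice, $q(s)$ is even, so I may set $\delta:=q(s)/2+1-\eps$ and $p:=b(s,v)+q(s)/2+k+2\eps$; these are the unique integers for which $q(s)=2\delta-2+2\eps$ and $b(s,v)=p-\delta-k+1-3\eps$. With this choice the Gram matrix of the basis $(v,s)$ of $T$ coincides with that of the basis $(v,w)$ of the saturation $\langle v,w\rangle$ of $\langle v,D_{p,\delta,k}\rangle$ from Remark~\ref{rem:cacca_doppio}; hence $v\mapsto v,\ w\mapsto s$ is an isometry, which will give~(b). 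Throughout I may assume $T$ to be hyperbolic, i.e. $\det T<0$: if $T$ were (semi)definite it would contain no class of negative square, hence no wall divisor, and there would be nothing to realise. (For $\eps=1$ this is exactly the condition $b(s,v)^2>q(v)q(s)$ appearing in the proof of Theorem~\ref{thm:muri}.)

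The core of the argument is to check the numerical hypotheses of Theorems~\ref{thm:exist} and~\ref{thm:muri_curve_doppio}. Writing $K:=k-1+2\eps$ (so $q(v)=2K$) and $B:=b(s,v)$, a direct expansion of the Brill--Noether number shows that
\[
\rho\bigl(p,l,(k+\eps)l+\delta\bigr)+\eps\,l(l+2)=K\,l^{2}-B\,l+\delta,
\]
so that, by Remark~\ref{rem:boundequiv}, condition~\eqref{eq:boundA} is equivalent to the positivity $f(l):=K l^{2}-Bl+\delta\ge 0$ at every integer $l\ge 0$. I would verify this from the inequalities (i) or (ii). In case (i) one has $0\le q(s)<B\le K+q(s)/2=K+\delta-1+\eps$, whence $B\le K+\delta$ and $\delta\le K$; then $f(0)=\delta\ge 0$, $f(1)=K-B+\delta\ge 0$, and for $l\ge 2$ one factors $f(l)\ge (l-1)(Kl-\delta)>0$. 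In case (ii) one has $\eps=0$, $\delta=0$, $0\le B\le K$, so $f(l)=l(Kl-B)\ge 0$. The same inequalities yield $\delta\ge 0$ and $\delta\le p-2\eps$, since $p-2\eps-\delta=B+K-\eps\ge 0$.

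It remains to ensure $p\ge 2$, needed for a primitively polarised surface of genus $p$. In case (i), $B\ge 1$ and $K\ge 1$ give $p\ge B+K+\eps\ge 2$; the same holds in case (ii) except when $k=2$, $\eps=0$ and $B=0$, where $s$ is (up to isometry) the exceptional class $\mathfrak{e}_2$ and the naive choice gives $p=1$. This single degenerate case I would resolve by replacing the generator $s$ by $w:=s+v$, which spans the same lattice $T$ and produces $(p,\delta)=(4,1)$ with $B=2$, so that $f(l)=(l-1)^{2}\ge 0$ and all the previous conditions hold. With $p,\delta,k$ satisfying~\eqref{eq:boundA}, Theorem~\ref{thm:exist} furnishes a general $(S,L)$ of genus $p$ together with a component of $\delta$-nodal curves whose normalisations carry a $g^{1}_{k+\eps}$ with simple ramification and non-neutral nodes, and Lemma~\ref{juveladra} then provides the class $R_{p,\delta,k}$ and its dual $D_{p,\delta,k}$.

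Finally, a short computation (consistent with Remark~\ref{rem:cile1}) gives the identity $q(R_{p,\delta,k})=\det T/q(v)$; since $T$ is hyperbolic this is negative, so $D_{p,\delta,k}$ is a wall divisor by Theorem~\ref{thm:muri_curve_doppio}, establishing~(a), while~(b) is the Gram-matrix matching described above. The step I expect to be the main obstacle is the Brill--Noether one: recognising that the existence bound~\eqref{eq:boundA} is precisely the nonnegativity of the quadratic $f(l)=Kl^{2}-Bl+\delta$ on the nonnegative integers, and that this is exactly what the wall inequalities (i)/(ii) encode; the degenerate exceptional-divisor case for $k=2$ also has to be dispatched separately.
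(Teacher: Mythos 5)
Your argument is correct, and it takes a genuinely different route from the paper's. The paper proceeds by induction from Example~\ref{mitico}: starting with $(p,\delta)=(2k-2+5\eps,0)$, it observes that the moves $\delta\mapsto\delta+1$ and $p\mapsto p-1$ preserve non-emptiness of $\{L\}^1_{\delta,k+\eps}$ and change the Gram matrix of the saturation in a controlled way ($q(w)\mapsto q(w)+2,\ b\mapsto b-1$, respectively $q(w)$ fixed, $b\mapsto b-1$), and then asserts that these moves sweep out all isometry classes. You instead invert the dictionary of Remark~\ref{rem:cacca_doppio} explicitly, solving $q(s)=2\delta-2+2\eps$ and $b(s,v)=p-\delta-k+1-3\eps$ for $(p,\delta)$, so that the Gram matrices of $(v,s)$ and $(v,w)$ agree on the nose. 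The real content of your proof is the identity $\rho(p,l,(k+\eps)l+\delta)+\eps l(l+2)=Kl^2-Bl+\delta$ with $K=k-1+2\eps$, $B=b(s,v)$, which converts \eqref{eq:boundA} (via Remark~\ref{rem:boundequiv}) into nonnegativity of an explicit quadratic; I checked this identity and your case analysis ($f(0),f(1)\ge 0$ from $q(s)\ge 0$ and $B\le K+\delta-1+\eps$; $f(l)\ge(l-1)(Kl-\delta)>0$ for $l\ge 2$ using $\delta\le K$), as well as the identity $q(R_{p,\delta,k})=\det T/q(v)$, and they are all correct. What your approach buys is verifiability: the paper's moves only reach off-diagonal entries $b\le K-\delta$, so its claim to exhaust all isometry classes tacitly relies on nontrivial changes of basis in rank-$2$ lattices that are never spelled out, whereas your construction hits the target Gram matrix directly and also checks the side conditions ($0\le\delta\le p-2\eps$, $p\ge 2$) that the paper's proof silently skips, including the genuine degenerate case $(k,\eps,B,q(s))=(2,0,0,-2)$, which you dispatch correctly by passing to the generator $s+v$ of the same lattice.

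One point deserves flagging, though it reflects an imprecision in the statement rather than a gap in your argument: conditions (i) of Theorem~\ref{thm:muri} do not by themselves force $T=\langle v,s\rangle$ to be indefinite (e.g.\ $q(v)=8$, $q(s)=2$, $b(s,v)=3$ gives a positive definite $T$, realized by an actual $s\in\Lambda$), and for such $s$ no wall divisor $D$ can have $\mathrm{Sat}\langle v,D\rangle\cong T$, since $q(D)<0$ makes that saturation hyperbolic. So the proposition must be read with the hyperbolicity of $T$ as an implicit hypothesis — exactly the condition $b(s,v)^2>q(v)q(s)$ that the proof of Theorem~\ref{thm:muri} notes is automatic there because $q(D)<0$. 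Your explicit reduction to this hypothesis is the right way to handle it, and under that reading your proof is complete.
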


\begin{proof}
By Theorem \ref{thm:exist}(i), as soon as $\{L\}^1_{\delta,k+\eps}$ is non-empty, then $\{L\}^1_{\delta+1,k+\eps}$ is non-empty, too. If the saturation of the lattice generated by $D_{p,\delta,k}$ and $v$ is isometric to \begin{footnotesize} $\left( \begin{array}{cc} 2\delta-2+2\eps & b\\ b & 2k-2+4\eps \end{array}\right)$\end{footnotesize}, the saturation of the lattice generated by 
$D_{p,\delta+1,k}$ and $v$ is isometric to \begin{footnotesize} $\left( \begin{array}{cc} 2\delta+2\eps & b-1\\ b-1 & 2k-2+4\eps \end{array}\right)$\end{footnotesize}. Analogously, if $(S,L)$ has genus $p$ and $\{L\}^1_{\delta,k+\eps}$ is non-empty, then $\{L'\}^1_{\delta,k+\eps}$ is non-empty for every primitively polarized $(S',L')$ of genus $p-1$. If the corresponding lattice in the genus $p$ case is \begin{footnotesize} $\left( \begin{array}{cc} 2\delta-2+2\eps & b\\ b & 2k-2+4\eps \end{array}\right)$\end{footnotesize}, the lattice in the genus $(p-1)$ case is \begin{footnotesize} $\left( \begin{array}{cc} 2\delta-2+2\eps & b-1\\ b-1 & 2k-2+4\eps \end{array}\right)$\end{footnotesize}. These remarks along Example \ref{mitico} give us all possible isometry classes of lattices $T$ as in the statement.
\end{proof}

\begin{remark}
As explained in Remark \ref{rem:nonprim}, the above proposition does not give all wall divisors up to the monodromy action. However, when $k-1+2\eps$ is a prime power, we have that $T$ determines and is determined by the monodromy orbit of $D$ as all isometries of $H^2(S^{[k]}_\eps)$ can be extended to isometries of $\Lambda$ fixing $v$. Hence the above proposition gives a full list of wall divisors up to monodromy  in these cases.
\end{remark}

\section{Lazarsfeld-Mukai bundles associated with  nodal curves}\label{sec:LM}

Let $C$ be a nodal curve on an abelian or $K3$ surface $S$ such that its normalization
$\tC$ possesses a {\it complete} $g^1_{k+\eps}$, that is, a globally generated line bundle $A$ of degree $k+\eps$ such that 
$h^0(A)=2$. We denote by $\nu:\tC\to C$ the normalization map, and by $N$ the $0$-dimensional subscheme of the nodes of $C$. By standard facts, $\nu_*A$ is a torsion free sheaf of rank one on $C$ that fails to be locally free precisely at $N$. 

Let $f: \widetilde{S} \to S$ be the blow up of $S$ at $N$, so that we have a commutative diagram
\begin{equation}
  \label{eq:fnf}
\xymatrix{ \tC \ar@{}[r]|-*[@]{\subset} \ar[d]_{\nu} \ar[dr]_{\varphi} & \tS \ar[d]^f \\
             C \ar@{}[r]|-*[@]{\subset} & S}
\end{equation}
We denote by $\E_{\tC,A}$\ and $\E_{{C},\nu_*A}$
the so-called {\it Lazarsfeld-Mukai bundles} associated with the line bundle $A$ on $\tC$ and the torsion free sheaf $\nu_*A$ on $C$, respectively; the duals of these bundles are the kernels of the evaluation maps regarding $A$ and $\nu_*A$ as torsion sheaves on the surfaces, that is, we have the following short exact sequences:
\begin{equation}
    \label{eq:defE}
   \xymatrix{
0 \ar[r] &  \E_{\tC,A}^\vee \ar[r] &  H^0(\tC,A) \* \O_{\tS} \ar[r]^{\hspace{0.9cm}ev_{\tS,A}} & A  \ar[r] & 0,} 
  \end{equation}
and
\begin{equation}
    \label{eq:defE2}
   \xymatrix{
0 \ar[r] &  \E_{C,\nu_*A}^\vee \ar[r] &  H^0(C,\nu_*A) \* \O_S \ar[r]^{\hspace{0.9cm}ev_{S,\nu_*A}} & \nu_*A.} 
  \end{equation}
The right arrow in \eqref{eq:defE2} might be non-surjective, as $\nu_*A$ is not necessarily globally generated (cf. Lemma \ref{lemma:abovelemma}). Pushing forward \eqref{eq:defE} to $\tS$ and using the isomorphisms $H^0(\tC,A) \cong H^0(C,\nu_*A)$ and $f_*\O_{\tS} \cong \O_S$, one shows that
\begin{equation}
  \label{eq:caz}
 \E_{{C},\nu_*A}^{\vee} \cong f_*(\E_{\tC,A}^{\vee}).  
\end{equation}

The following result establishes when \eqref{eq:defE2} is exact on the right.

\begin{lemma} \label{lemma:abovelemma}
Let $C$ be a nodal curve and denote by $\nu:\tC\to C$ the normalization map. Let $A$ be a complete, base point free pencil on $C$. Then the sheaf $\nu_*A$ is globally generated except precisely at the nodes of $C$ that are neutral with respect to $|A|$. 
\end{lemma}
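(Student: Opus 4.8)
The plan is to reduce global generation to a fibrewise computation and then analyse the stalk of $\nu_*A$ at each node. Since $\nu$ is finite, pushforward is exact and $H^0(C,\nu_*A)\cong H^0(\tC,A)$; moreover, by Nakayama's lemma $\nu_*A$ is globally generated at a point $q\in C$ if and only if the evaluation map on fibres
\[
\mathrm{ev}_q\colon H^0(\tC,A)=H^0(C,\nu_*A)\hpil (\nu_*A)\* k(q)
\]
is surjective. Away from the nodes $\nu$ is an isomorphism, so over a smooth point $q$ with unique preimage $\tilde q\in\tC$ the fibre $(\nu_*A)\*k(q)$ is $A|_{\tilde q}\cong k$ and $\mathrm{ev}_q$ is surjective because $A$ is base point free on $\tC$. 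Hence the only points where global generation can fail are the nodes, and it remains to treat them.

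Second, I would carry out the local computation at a node $n$ with preimages $\nu^{-1}(n)=\{p_1,p_2\}\subset\tC$. Formally (or \'etale-) locally the node is $\Spec\O_{C,n}$ with $\nu:\tC\to C$ its normalization, so after trivialising $A$ near $p_1,p_2$ the stalk $(\nu_*A)_n$ is identified with $\O_{\tC,p_1}\oplus\O_{\tC,p_2}$ as an $\O_{C,n}$-module. The maximal ideal $\mathfrak{m}_n\sub\O_{C,n}$ acts through its images in the two branches, which generate $\mathfrak{m}_{p_1}\oplus\mathfrak{m}_{p_2}$; consequently the fibre
\[
(\nu_*A)\*k(n)\cong\bigl(A|_{p_1}\bigr)\+\bigl(A|_{p_2}\bigr)
\]
is two-dimensional, and under this identification $\mathrm{ev}_n$ sends a section $s\in H^0(\tC,A)$ to the pair of its values $(s(p_1),s(p_2))$.

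Finally, I would read off the criterion. As $h^0(A)=2$, the map $\mathrm{ev}_n\colon H^0(\tC,A)\to A|_{p_1}\+A|_{p_2}$ is a map between two-dimensional spaces, so it is surjective if and only if it is injective, i.e. if and only if its kernel $H^0\bigl(A(-p_1-p_2)\bigr)$ vanishes. By definition this kernel is nonzero precisely when there is a section of $A$ vanishing at both $p_1$ and $p_2$, that is, exactly when $n$ is neutral with respect to $|A|$. Therefore $\nu_*A$ fails to be globally generated at $n$ if and only if $n$ is a neutral node, which is the assertion. The step I expect to require the most care is the local computation showing that the fibre of $\nu_*A$ jumps to dimension two at a node and that $\mathrm{ev}_n$ is genuinely the two-point evaluation $s\mapsto(s(p_1),s(p_2))$; once this is pinned down, the neutral/non-neutral dichotomy follows immediately from $h^0(A)=2$.
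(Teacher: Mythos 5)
Your proof is correct, but it takes a more elementary route than the paper's. You reduce everything to a single local computation: the fibre of $\nu_*A$ at a node $n$ is the two-dimensional space $A|_{p_1}\oplus A|_{p_2}$ (your identification $\mathfrak{m}_n\cdot\bigl(\O_{\tC,p_1}\oplus\O_{\tC,p_2}\bigr)=\mathfrak{m}_{p_1}\oplus\mathfrak{m}_{p_2}$ checks out, since $\O_{C,n}$ is exactly the subring of pairs agreeing at the node), so that Nakayama turns global generation at $n$ into surjectivity of the two-point evaluation $H^0(\tC,A)\to A|_{p_1}\oplus A|_{p_2}$, and completeness of the pencil turns surjectivity into the vanishing of $H^0(A(-p_1-p_2))$. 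This handles both implications at once and needs no reduction to the one-node case. The paper instead argues sheaf-theoretically in two separate directions: if the node is neutral it produces a line bundle $A'$ on $C$ with $\nu^*A'\cong A$ and an exact sequence $0\to A'\to\nu_*A\to\O_P\to 0$, then compares $h^0$'s; conversely, if global generation fails it shows the image of the evaluation map is a line bundle $A_1$ with $\nu^*A_1\cong A$, forcing $\phi_{|A|}$ to factor through $C$. The one point you should make explicit is that your criterion ``some section of $A$ vanishes at both $p_1$ and $p_2$'' agrees with the definition of neutrality via factorization of $\phi_{|A|}$ through $C$; for a complete base point free pencil these are equivalent (a member of $|A|$ contains $p_1+p_2$ iff $\phi_{|A|}(p_1)=\phi_{|A|}(p_2)$ iff the morphism descends to $C$), so this is a remark rather than a gap. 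What your approach buys is transparency and uniformity over all nodes; what the paper's buys is the explicit factorized line bundle on $C$, which is the object reused elsewhere (e.g.\ in the proof of Proposition \ref{prop:importante}).
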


\begin{proof}
We can assume that $C$ has only one node $P$, since the general case is analogous using partial normalizations. Let $\phi_{|A|}:\tC \to \PP^1$ be the 
morphsm defined by $|A|$.

Assume that $P$ is a neutral node with respect to $|A|$. Then $\phi_{|A|}$
factors through a morphism
$\phi: C \to \PP^1$ having the same degree as $\phi_{|A|}$. Having set $A':=\phi^*\O_{\PP^1}(1)$, one has $\nu^*A' \cong
A$ and $\nu_*A \cong \nu_*\nu^*A' \cong A' \* \nu_* \O_C $, hence $\nu_*A$ sits in the following short exact sequence:
$$0\longrightarrow A'\longrightarrow \nu_*A\longrightarrow\O_P\to 0.$$
Since $h^0(C,\nu_*A)= h^0(\tC,A)=h^0(C,A')=2$, the sheaf $\nu_*A$ cannot be globally generated. 

Conversely, assume that $\nu_*A$ is not globally generated at $P$, that is, the evaluation map  $$ev:H^0(\nu_*A) \* \O_C \to \nu_*A$$ is not surjective.
Since $A$ is globally generated and $\nu$ is a finite map, we have a surjection $$H^0(\nu_*A)\otimes \nu_*\O_{\tC} \simeq H^0(A)\otimes \nu_*\O_{\tC}\twoheadrightarrow \nu_*A.$$ Using \eqref{eq:standard}, one can easily show that the cokernel $A_1$ of $ev$ sits in a short exact sequence
$$0\longrightarrow A_1\longrightarrow \nu_*A\longrightarrow\O_P\longrightarrow 0.$$ In particular, $A_1$ is a line bundle and $\nu^*A_1=A$. Hence, the morphism $\phi_{|A|}$ factors through a morphism $\phi_{|A_1|}:C\to\mathbb{P}^1$, which means that $P$ is a neutral node with respect to $|A|$.
\end{proof}

The above lemma implies that if $C$ is a general curve of the nice component $Y_{\delta,k+\eps}$ in Theorem \ref{thm:exist} and  $|A|$ is a general $g^1_{k+\eps}$ on $\tC$, then $\nu_*A$ is globally generated and the short exact sequence \eqref{eq:defE2} is exact on the right. By dualizing it, we obtain:
\begin{equation}
    \label{eq:defEdual}
   \xymatrix{
0 \ar[r] &  H^0(C,\nu_*A)^\vee \* \O_S \ar[r] & \E_{C,\nu_*A} \ar[r] & \ext^1(\nu_*A,\O_S) \ar[r] & 0,}  
  \end{equation}
where $\ext^1(\nu_*A,\O_S)$ is a rank one torsion free sheaf on $C$.
This defines a subspace $$V \cong H^0(C,\nu_*A)^{\vee} \in G(2, H^0(S,\mathcal{E}_{C,\nu_*A}))$$ such that the evaluation map $V\otimes \mathcal{O}_S\to \mathcal{E}_{C,\nu_*A}$ is injective, drops rank along $C$ and has rank $0$ exactly at the nodes of $C$. As a consequence, every section in $V$ vanishes along a $0$-dimensional subscheme of length $k+\eps+\delta$ always containing the subscheme $N$ of the $\delta$ nodes of $C$.

We want to understand whether the pair $(C,\nu_*A)$ univocally determines the subspace $V$; this is equivalent to computing the dimension of  $ \Hom(\E_{C,\nu_*A},\ext^1_{\O_S}(\nu_*A,\O_S))$. To achieve this goal we need some technical results.

For any torsion free rank one sheaf $\A$ on $C$ we denote by $\A^D$ the dual sheaf $\hom_{\O_C}(\A,\O_C)$. We prove the following:

\begin{lemma} \label{lemma:technical2}
 Let $\F$ be any rank one torsion free sheaf on a curve $C\subset S$, where $S$ is a $K3$ or abelian surface. Then one has:
 \begin{equation}
   \label{eq:andreas}
  \omega_C \* \F^D \cong \mathfrak{ext}^1_{\O_S}(\F,\O_S). \end{equation}

Assume furthermore that $\F$ is the cokernel of an injective map
$V\* \O_S \to \E$, where $\E$ is a rank two bundle on $S$ and $V \in G(2,H^0(S,\E))$. If $Z$ is a zero-dimensional scheme that is the vanishing locus of a $s \in V$, one obtains the isomorphisms: 
\begin{equation}
 \label{eq:andreas2} \F \cong \omega_C \* \I_{Z/C} 
\end{equation}
and
\begin{equation}
  \label{eq:andreas3}
\ext^1_{\O_S}(\F,\O_S) \cong \hom_{\O_C}(\I_{Z/C},\O_C).
 \end{equation}
\end{lemma}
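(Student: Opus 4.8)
The plan is to establish the three isomorphisms in order, obtaining \eqref{eq:andreas3} formally from \eqref{eq:andreas} and \eqref{eq:andreas2}.

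For \eqref{eq:andreas} I would invoke relative (Grothendieck--Serre) duality for the closed immersion $i:C\hookrightarrow S$ of the Cartier divisor $C$. Since $S$ is $K3$ or abelian, $\omega_S\cong\O_S$, and since $C$ is nodal it is Gorenstein, so $\omega_C$ is a line bundle and adjunction gives $\omega_C\cong\O_C(C)=\O_S(C)|_C$; thus the relative dualizing sheaf $\omega_{C/S}:=\omega_C\*i^*\omega_S^{-1}$ is isomorphic to $\omega_C$. As $\F$ is a rank one torsion free, hence maximal Cohen--Macaulay, $\O_C$-module, one has $\ext^q_{\O_S}(\F,\O_S)=0$ for $q\neq1$, while relative duality yields $\ext^1_{\O_S}(\F,\O_S)\cong\hom_{\O_C}(\F,\omega_{C/S})\cong\hom_{\O_C}(\F,\omega_C)$. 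Because $\omega_C$ is invertible, $\hom_{\O_C}(\F,\omega_C)\cong\F^D\*\omega_C$, which is \eqref{eq:andreas}. (Equivalently, one may resolve $\F$ by a two-term locally free complex on $S$ and identify the cokernel of the dual complex.)

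For \eqref{eq:andreas2}, fix $s\in V$ with zero-scheme $Z$ and complete $s$ to a basis $\{s,s'\}$ of $V$. Since $Z$ has the expected codimension two, the section $s$ gives the Koszul/Eagon--Northcott sequence $0\to\O_S\xrightarrow{s}\E\to\I_{Z/S}(C)\to0$, where the right-hand map is $v\mapsto s\wedge v$ into $\det\E\cong\O_S(C)$ and $\I_{Z/S}(C):=\I_{Z/S}\*\O_S(C)$; here I use that the determinant of $V\*\O_S\to\E$ is the section $\sigma_C$ cutting out $C$. Passing to the quotient by $s$, the sheaf $\F=\coker(V\*\O_S\to\E)$ becomes the cokernel of $\O_S\xrightarrow{\bar s'}\I_{Z/S}(C)$, and the composite $\O_S\xrightarrow{\bar s'}\I_{Z/S}(C)\hookrightarrow\O_S(C)$ is multiplication by $s\wedge s'=\sigma_C$. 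Hence $\F$ is the image of $\I_{Z/S}(C)$ under the restriction $\O_S(C)\twoheadrightarrow\O_S(C)|_C=\omega_C$ (whose kernel is exactly $\sigma_C\O_S$), and since $\sigma_C\in\I_{Z/S}$ forces $Z\subset C$ scheme-theoretically, this image is $\I_{Z/C}\*\omega_C$, giving \eqref{eq:andreas2}. Finally, \eqref{eq:andreas3} follows by combining the two: from \eqref{eq:andreas2} one has $\F^D\cong\omega_C^{-1}\*\hom_{\O_C}(\I_{Z/C},\O_C)$, so $\omega_C\*\F^D\cong\hom_{\O_C}(\I_{Z/C},\O_C)$, which by \eqref{eq:andreas} equals $\ext^1_{\O_S}(\F,\O_S)$.

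The step I expect to be most delicate is the scheme-theoretic bookkeeping in \eqref{eq:andreas2}: one must check that the determinant section is exactly $\sigma_C$ (so that $\bar s'$ is honestly multiplication by $\sigma_C$), that $Z$ sits inside $C$ as a subscheme, and that the image of $\I_{Z/S}(C)$ in $\omega_C$ is precisely $\I_{Z/C}\*\omega_C$ rather than merely a subsheaf with the correct support and colength. By contrast, the duality in \eqref{eq:andreas} is comparatively standard.
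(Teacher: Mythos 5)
Your proof is correct, and for \eqref{eq:andreas2} and \eqref{eq:andreas3} it is essentially the paper's argument: the paper packages your Koszul computation into a $3\times 3$ commutative diagram built from $0\to\O_S\xrightarrow{s}\E\to\det\E\*\I_{Z/S}\to 0$ and $0\to V\*\O_S\to\E\to\F\to 0$, identifies the induced map $\O_S\to\det\E\*\I_{Z/S}$ as multiplication by the local equation of $C$ (your $s\wedge s'=\sigma_C$), and then quotients by $\I_{C/S}$ exactly as you do; your ``delicate'' bookkeeping is handled there by the exact sequence $0\to\I_{C/S}\to\I_{Z/S}\to\I_{Z/C}\to 0$, which is legitimate precisely because $\sigma_C\O_S=\I_{C/S}\*\O_S(C)$ sits inside $\I_{Z/S}\*\O_S(C)$, so the image you need is the honest ideal quotient $(\I_{Z/S}/\I_{C/S})\*\O_S(C)\cong\I_{Z/C}\*\omega_C$. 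Where you genuinely diverge is \eqref{eq:andreas}: you invoke Grothendieck--Serre duality for $C\hookrightarrow S$ together with the maximal Cohen--Macaulay property of $\F$, whereas the paper gives an elementary derivation by applying $\hom_{\O_S}(\F,-)$ to $0\to\I_{C/S}\to\O_S\to\O_C\to 0$ and observing that the connecting map $\ext^1_{\O_S}(\F,\I_{C/S})\to\ext^1_{\O_S}(\F,\O_S)$ is multiplication by the local equation of $C$, hence zero on a sheaf supported on $C$; this yields $\hom_{\O_C}(\F,\O_C)\cong\ext^1_{\O_S}(\F,\O_S)\*\O_S(-C)$ and then \eqref{eq:andreas} by twisting. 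Your route is shorter but leans on a bigger theorem; the paper's is self-contained and makes the twist by $\O_S(C)\vert_C\cong\omega_C$ completely explicit. One small inaccuracy: you justify Gorenstein-ness of $C$ by nodality, which the lemma does not assume --- what you actually need (and have) is that any curve in the smooth surface $S$ is a Cartier divisor, hence a local complete intersection, so $\omega_C\cong\O_C(C)$ is invertible regardless of the singularities of $C$.
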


\begin{proof}
Applying $\mathfrak{hom}_{\O_S}(\F,-)$ to the short exact sequence
\begin{equation}
  \label{eq:andreas1}
\xymatrix{
0 \ar[r] & \I_{C/S} \ar[r]^{\alpha} & \O_S \ar[r] & \O_C \ar[r] & 0,}
\end{equation}
we obtain
\[
\xymatrix{
0=\mathfrak{hom}_{\O_S}(\F,\O_S) \ar[r] & \mathfrak{hom}_{\O_S}(\F,\O_C) \ar[r] &  
\mathfrak{ext}^1_{\O_S}(\F,\I_{C/S}) \ar[r]^{\alpha'} & \mathfrak{ext}^1_{\O_S}(\F,\O_S).} 
\]
Since $\alpha$ and $\alpha'$ are given by multiplication with the local equation of $C$ and $\F$ is supported precisely at $C$, the map $\alpha'$ is zero. 
Hence, we get
\[\mathfrak{hom}_{\O_C}(\F,\O_C) \cong \mathfrak{hom}_{\O_S}(\F,\O_C) \cong 
\mathfrak{ext}^1_{\O_S}(\F,\I_{C/S}) \cong \mathfrak{ext}^1_{\O_S}(\F,\O_S(-C))
\cong \mathfrak{ext}^1_{\O_S}(\F,\O_S) \* \O_S(-C),\]
and \eqref{eq:andreas} is obtained by tensoring with $\O_S(C)$.

Concerning the second part of the statement, we set  $L :=\det \E$ and consider the following commutative diagram:
\begin{equation} \label{juve30}
\xymatrix{
&  & &  0 \ar[d] & \\
& 0 \ar[d] & &  \O_S \ar[d]^i & \\
0 \ar[r] & \O_S \ar[r]^s \ar[d] & \E \ar[r]  \ar[d]^{\cong} & L \* \I_{Z/S} \ar[r] \ar[d]  & 0 \\
0 \ar[r] & V \* \O_S \ar[r] \ar[d]  & \E \ar[r]   & \F \ar[r] \ar[d]  & 0 \\ 
  & \O_S \ar[d] & & 0 & \\
 & 0 & &  & 
}
\end{equation}
The short exact sequence of ideals
\[
\xymatrix{
0 \ar[r] & \I_{C/S} \ar[r] & \I_{Z/S} \ar[r] & \I_{Z/C} \ar[r] & 0
}
\]
yields the isomorphism $\I_{Z/C} \cong \I_{Z/S} \* \O_C$. Hence, \eqref{eq:andreas2} is obtained by restricting the vertical exact sequence on the right in \eqref{juve30} to $C$ and using that $i$ is given by multiplication with the local equation of $C$. Combining  \eqref{eq:andreas2} and \eqref{eq:andreas}, we obtain
\eqref{eq:andreas3}.
\end{proof}

We now extend \cite[Lemma 2]{P}  to possibly nodal curves on $K3$ or abelian surfaces (cf. also \cite[Pf. of Prop. 3.2]{LC1} concerning other types of irregular surfaces); we are confident that this result of independent interest.

\begin{proposition}\label{prop:pareschi}
Let $C$ be a nodal curve on a $K3$ or abelian surface $S$ and denote by $\nu:\tC\to C$ the normalization map. Let $A$ be a globally generated line bundle on $\tC$ satisfying $h^0(\tC,A)=2$. Then there is a natural short exact sequence 
\begin{equation} \label{eq:brasile}
\xymatrix{
0\ar[r]& \O_{\tC}\ar[r] & \E_{\tC,A}^{\vee} \* \omega_{\tC} \* A^{\vee} \ar[r] & \omega_{\tC} \* (A^{\vee})^{\* 2} \ar[r]&0
}
\end{equation}
whose coboundary map $Q: H^0(\tC, \omega_{\tC} \* (A^{\vee})^{\*2}) \to H^1(\tC,\O_{\tC})$ coincides, up to multiplication by a nonzero scalar factor, with the composition of the Gaussian map
$$\mu_{1,A}:\ker\mu_{0,A}\to H^0(\tC,\omega_{\tC}^{\otimes 2})$$
and the dual of the Kodaira-Spencer map 
\[
\xymatrix{
 \kappa: H^1(C,\nu_*\O_{\tC})^{\vee} \cong T_{[C]}V_{\{L\},\delta} \ar[r] & 
T_{[\tC]}\M_{p_g(C)} \cong H^0(\tC, \omega_{\tC}^{\*2})^{\vee},
}
\]
via the canonical isomorphism $H^1(C,\nu_*\O_{\tC}) \cong H^1(\tC,\O_{\tC})$.
\end{proposition}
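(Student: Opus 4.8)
\emph{Strategy.} I would follow the method of Pareschi \cite[Lemma 2]{P}, producing \eqref{eq:brasile} as a suitably twisted restriction to $\tC$ of the defining sequence \eqref{eq:defE} of the Lazarsfeld--Mukai bundle, and then computing its connecting homomorphism directly. The whole argument is carried out on the blow-up $\tS$, so as to reduce everything to the smooth curve $\tC$; the only genuinely new feature with respect to the smooth case is the bookkeeping of the $2\delta$ preimages of the nodes.

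\emph{Construction of \eqref{eq:brasile}.} Restricting \eqref{eq:defE} to $\tC$ and using that $A$ is a torsion sheaf supported on the divisor $\tC\subset\tS$, the only surviving Tor sheaf is $\mathcal{T}or_1^{\O_{\tS}}(A,\O_{\tC})\cong A\otimes\O_{\tC}(-\tC)$. Splicing this with the base-point-free pencil sequence $0\to A^\vee\to H^0(\tC,A)\otimes\O_{\tC}\to A\to 0$ produces
\begin{equation*}
0\longrightarrow A\otimes\O_{\tC}(-\tC)\longrightarrow \E_{\tC,A}^\vee|_{\tC}\longrightarrow A^\vee\longrightarrow 0 .
\end{equation*}
Tensoring by $\omega_{\tC}\otimes A^\vee$ and rewriting the outer terms by adjunction on $\tS$ — here the relations $\nu^*\omega_C\cong\omega_{\tC}(\text{node preimages})$ and $\omega_{\tS}|_{\tC}\cong\O_{\tC}(\text{node preimages})$, which compare $\O_{\tC}(\tC)$ with $\omega_{\tC}$, must be used with care — yields \eqref{eq:brasile}. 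Naturality of all the maps involved gives the naturality claimed in the statement.

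\emph{The coboundary.} The domain of $Q$ is identified with $\ker\mu_{0,A}$ by the base-point-free pencil trick, since $H^0(\tC,\omega_{\tC}\otimes(A^\vee)^{\otimes2})=\ker\big(H^0(A)\otimes H^0(\omega_{\tC}\otimes A^\vee)\to H^0(\omega_{\tC})\big)$; likewise the target $H^1(\tC,\O_{\tC})$ of $Q$ matches $H^1(C,\nu_*\O_{\tC})$ appearing in $\kappa$ via the canonical isomorphism. It then remains to prove $Q=\kappa^\vee\circ\mu_{1,A}$ up to a nonzero scalar. I would do this by a \v{C}ech computation of the connecting map of \eqref{eq:brasile}: choosing local trivialisations of $\E_{\tC,A}^\vee$ adapted to the pencil $A$, the coboundary of a section of $\omega_{\tC}\otimes(A^\vee)^{\otimes2}$ is represented by a $1$-cocycle built from the derivatives of the transition data of $A$, which is exactly the cocycle computing the Gaussian map $\mu_{1,A}$. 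The remaining factor $\kappa^\vee$ arises because the extension \eqref{eq:brasile} is governed by the normal-bundle data of $\tC$ in $\tS$, whose deformation-theoretic incarnation is the Kodaira--Spencer map of the Severi variety; concretely, it is $\kappa$ that converts the quadratic differential $\mu_{1,A}(\cdot)\in H^0(\omega_{\tC}^{\otimes2})$ into the class in $H^1(\tC,\O_{\tC})$ produced by the connecting homomorphism.

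\emph{Main obstacle.} The hard part is this last identification: matching the connecting homomorphism of \eqref{eq:brasile} with $\mu_{1,A}$ followed by $\kappa^\vee$. In Pareschi's smooth setting this is a local computation with the second fundamental form; in the present nodal setting the same computation must be transported to $\tC$ through the blow-up $\tS$, and one must verify that the contribution of the nodes is accounted for exactly by the failure of $\kappa:T_{[C]}V_{\{L\},\delta}\to T_{[\tC]}\M_{p_g(C)}$ to be an isomorphism. Checking this compatibility — rather than the formal homological algebra of the construction — is where the real work lies.
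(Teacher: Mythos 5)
Your outline correctly identifies the shape of the argument (build \eqref{eq:brasile} \`a la Pareschi, then show $Q=\kappa\v\circ\mu_{1,A}$), but the central claim is never proved: the identification of the coboundary is exactly the content of the proposition, and you defer it to an unexecuted \v{C}ech computation plus the heuristic that ``the remaining factor $\kappa\v$ arises because the extension is governed by the normal-bundle data of $\tC$ in $\tS$.'' You yourself flag this as ``where the real work lies,'' which is an accurate self-diagnosis of a gap. The paper closes it without any cocycle computation, by the mechanism of Pareschi's Lemma~1: tensoring the derivation $d:\O_{\tC}\to\omega_{\tC}$ (resp.\ the pullback of $d:\O_S\to\Omega_S$) with the evaluation map of $A$ produces $\O$-linear maps $s:\omega_{\tC}\*(A\v)^{\*2}\to\omega_{\tC}^{\*2}$, which induces $\mu_{1,A}$ on global sections, and $t:\E_{\tC,A}\v\*\omega_{\tC}\*A\v\to f^*\Omega_S\*\omega_{\tC}$; these assemble into a morphism of short exact sequences from (the pushforward of) \eqref{eq:brasile} to the sequence \eqref{eq:seconda}, whose first coboundary is $\kappa\v$ by construction, and functoriality of connecting homomorphisms finishes the proof. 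Some explicit comparison map of this kind (or an actually carried-out local computation) is indispensable; without it you have a plan, not a proof.

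Two further inaccuracies in your sketch sit precisely at the nodal subtleties you promise to ``use with care.'' First, the second factor of $Q$ is not controlled by the normal bundle $\O_{\tC}(\tC)$ of $\tC$ in $\tS$: by adjunction $\O_{\tC}(\tC)\cong\omega_{\tC}\*\O_{\tC}(-E|_{\tC})$ since $\omega_{\tS}\cong\O_{\tS}(E)$, whereas the object that identifies $T_{[C]}V_{\{L\},\delta}$ with $H^1(C,\nu_*\O_{\tC})\v$ and realizes $\kappa\v$ as a coboundary is the normal sheaf $\N_{\varphi}\cong\omega_{\tC}$ of the composite morphism $\varphi:\tC\to S$, equivalently the equisingular normal sheaf $\N'_{C/S}\cong\omega_C\*(\nu_*\O_{\tC})^D$; these differ exactly by the node preimages, so the distinction is not cosmetic. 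Second, your construction of \eqref{eq:brasile} does not, as written, produce the stated sequence: restricting \eqref{eq:defE} to $\tC$ gives $\Tor_1^{\O_{\tS}}(A,\O_{\tC})\cong A\*\O_{\tC}(-\tC)$ as kernel, and after tensoring by $\omega_{\tC}\*A\v$ the first term is $\omega_{\tC}\*\O_{\tC}(-\tC)\cong\omega_{\tS}|_{\tC}\cong\O_{\tC}(E|_{\tC})$, not $\O_{\tC}$, as a determinant count on the middle term confirms. Reconciling this twist by the $2\delta$ node preimages with the asserted sequence is precisely the ``care'' you invoke but do not supply.
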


\begin{proof}
The exact sequence \eqref{eq:brasile} is obtained as  
\cite[(4)]{P} since $\omega_S\simeq \O_S$.  

We denote by $\N'_{C/S}$ the {\it equisingular normal sheaf} of $C$ in $S$, and by $\N_{\varphi}$ the {\it normal sheaf to the morphism} $\varphi$ in \eqref{eq:fnf}. We recall that $\N'_{C/S}\simeq \nu_*\N_{\varphi} \cong \omega_C \* (\nu_*\O_{\tC})^D$ by, e.g., \cite[Lemma 3.4.15]{ser} and \cite[p. 111]{ta}. In particular, we have:
\begin{equation} \label{eq:forzajuve}
 T_{[C]}V_{\{L\},\delta} \cong H^0(C, \N'_{C/S}) \cong H^0(\tC,\N_{\varphi})  \cong H^1(C,\nu_*\O_{\tC})^{\vee}.
\end{equation}
The Kodaira-Spencer map $\kappa$ is the 
first coboundary map of the {\it normal sequence} 
\begin{equation} \label{eq:normalsequence}   
\xymatrix{
0 \ar[r] &  \T_{\tC} \ar[r] &  \varphi^*\T_S \ar[r] & \N_{\varphi} \cong \omega_{\tC} \ar[r] & 0,}
\end{equation}
where the isomorphism on the right again follows from the triviality of $\omega_S$. 

Applying $\nu_*$ and again using the isomorphism $\nu_*\omega_{\tC} \cong \omega_C \* (\nu_*\O_{\tC})^D$, 
we obtain the exact sequence
\[
\xymatrix{
0 \ar[r] &  \nu_*(\omega_{\tC}^{\vee}) \ar[r] & \T_S \* \nu_*\O_{\tC} \ar[r] & \omega_C \* (\nu_*\O_{\tC})^D \ar[r] & 0,}
\]
and its dual
\[
\xymatrix{
0 \ar[r] &  \omega_C^{\vee} \* (\nu_*\O_{\tC}) \ar[r] &  \Omega_S \* (\nu_*\O_{\tC})^D \ar[r] & 
(\nu_*(\omega_{\tC}^{\vee}))^D \ar[r] & 0.}
\]
The right exactness of the latter is due to the fact that
$\ext^1_{\O_C}((\nu_*\O_{\tC})^D,\O_C)=0$, which can easily be verified using the standard exact sequence 
\begin{equation} \label{eq:standard}
0\longrightarrow \O_C\longrightarrow\nu_*\O_{\tC} \longrightarrow \O_N\longrightarrow 0.
\end{equation}
Tensoring with $\omega_C$, we obtain
\begin{equation} \label{eq:seconda}
\xymatrix{
0 \ar[r] &  \nu_*\O_{\tC} \ar[r] &  \Omega_S \* \omega_C \* (\nu_*\O_{\tC})^D \ar[r] & 
\omega_C \* (\nu_*(\nu_{\tC}^{\vee}))^D \cong \nu_*(\omega_{\tC}^{\* 2}) \ar[r] & 0,}
\end{equation}
where the last isomorphism follows from \cite[Lemma 4.6]{BP}. By construction, the first coboundary map of \eqref{eq:seconda} is $\kappa^{\vee}$. 

We now follow \cite[Pf. of Lemma 1]{P}. Tensoring the derivation operator $d: \O_{\tC} \to \omega_{\tC}$ with the evaluation map $ev_{\tC,A}:H^0(A) \* \O_{\tC} \to A$,
 we obtain a map $H^0(A) \* \O_{\tC} \to \omega_{\tC} \* A$, whose restriction to 
$\ker ev_{\tC,A} \cong A^{\vee}$ is $\O_{\tC}$-linear. Tensoring the restricted map with $\omega_{\tC} \* A^{\vee}$, we obtain a map of $\O_{\tC}$-modules 
\begin{equation} \label{eq:s}
\xymatrix{
s: \; \omega_{\tC} \* (A^{\vee})^{\*2} \ar[r] & \omega_{\tC}^{\*2}, 
}
\end{equation}
whose associated map at the global section level is the Gaussian map $\mu_{1,A}$ (remember the standard isomorphism $\ker\mu_{0,A} \cong H^0(\tC, \omega_{\tC} \* (A^{\vee})^{\*2})$). 

Similarly, tensoring the pullback under $f$ of the derivation operator 
$d: \O_S \to \Omega_S$ with the evaluation map $ev_{\tC,A}: H^0(A) \* \O_{\tS} \to A$,
 we obtain a map $H^0(A) \* \O_{\tS} \to f^*\Omega_S \* A$, whose restriction to $\E_{\tC,A}^{\vee}$ is $\O_{\tS}$-linear. Tensoring the restricted map with $\omega_{\tC} \* A^{\vee}$, we obtain a map of $\O_C$-modules 
\begin{equation} \label{eq:t}
\xymatrix{
t: \;  \E_{\tC,A}^{\vee} \* \omega_{\tC} \* A^{\vee} \ar[r] & f^*\Omega_S \* \omega_{\tC}.
}
\end{equation}
The sequences and maps \eqref{eq:brasile}, \eqref{eq:seconda}, \eqref{eq:s} and \eqref{eq:t} combine into:
\[
\xymatrix{
0\ar[r]& \nu_*\O_{\tC}\ar[r] \ar@{=}[d] &  \nu_* (\E_{\tC,A}^{\vee} \* \omega_{\tC} \* A^{\vee})  \ar[r] \ar[d]^{\nu_*t} & \nu_*(\omega_{\tC} \otimes (A^{\vee})^{\* 2}) \ar[r] \ar[d]^{\nu_*s} & 0 \\
0 \ar[r] &  \nu_*\O_{\tC} \ar[r] &  \Omega_S \* \omega_C \* (\nu_*\O_{\tC})^D \ar[r] & 
\nu_*(\omega_{\tC}^{\* 2}) \ar[r] & 0
}
\]
and the result follows.
\end{proof}

We are now ready to prove the following:

\begin{proposition}\label{tecnica}
If $C$ is a general element of the component $Y_{\delta,k+\eps}$ in Theorem \ref{thm:exist} and  $|A|$ is a general $g^1_{k+\eps}$ on $\tC$, then $$\dim \Hom(\E_{C,\nu_*A},\ext^1_{\O_S}(\nu_*A,\O_S))=1.$$
\end{proposition}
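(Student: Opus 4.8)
\emph{The plan.} I would begin by fixing the notation $\E:=\E_{C,\nu_*A}$, $\F:=\nu_*A$, $\G:=\ext^1_{\O_S}(\F,\O_S)$ and $V:=H^0(C,\F)^\vee$, so that \eqref{eq:defE2} reads $0\to\E^\vee\to H^0(C,\F)\otimes\O_S\to\F\to0$, which is exact on the right since $\nu_*A$ is globally generated for general $C\in Y_{\delta,k+\eps}$ and general $A$ (Lemma \ref{lemma:abovelemma} together with Theorem \ref{thm:exist}(iii)). Because $\omega_S\cong\O_S$, Serre duality on $S$ gives $\Hom_{\O_S}(\E,\G)\cong\Ext^2_{\O_S}(\G,\E)^\vee$, so it suffices to show $\dim\Ext^2_{\O_S}(\G,\E)=1$. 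Both $\F$ and $\G$ are rank one torsion free, hence maximal Cohen--Macaulay, sheaves on the integral Gorenstein curve $C$; by Auslander--Buchsbaum they have projective dimension one over $\O_S$, so $\hom_{\O_S}(\G,\E)=\ext^2_{\O_S}(\G,\E)=0$ and the local-to-global spectral sequence collapses to $\Ext^2_{\O_S}(\G,\E)\cong H^1(S,\ext^1_{\O_S}(\G,\E))$. Using $\ext^1_{\O_S}(\G,\E)\cong\ext^1_{\O_S}(\G,\O_S)\otimes\E$, the isomorphism \eqref{eq:andreas} applied first to $\F$ and then to $\G$, and the reflexivity $\F^{DD}\cong\F$, one computes $\ext^1_{\O_S}(\G,\O_S)\cong\omega_C\otimes\G^D\cong\F$. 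Hence $\ext^1_{\O_S}(\G,\E)\cong\F\otimes\E$ and $\dim\Hom_{\O_S}(\E,\G)=h^1(S,\F\otimes\E)$.

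Next I would compute $h^1(\F\otimes\E)$ from the sequence obtained by tensoring the defining sequence of $\E^\vee$ with the locally free sheaf $\E$:
\[ 0\to\E^\vee\otimes\E\to H^0(C,\F)\otimes\E\to\F\otimes\E\to0. \]
Since the map $H^0(C,\F)\otimes H^0(\O_S)\to H^0(C,\F)$ is the identity, we get $H^0(\E^\vee)=0$, whence $H^2(\E)\cong H^0(\E^\vee)^\vee=0$. For general $(C,A)$ the bundle $\E$ is a general member of the component $\M$ and is therefore simple (by the properties of general sheaves in $\M$, Proposition \ref{prop:importante}), so $H^2(\E^\vee\otimes\E)\cong\End(\E)^\vee\cong\CC$. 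The long exact sequence then reads
\[ H^1(\E^\vee\otimes\E)\xrightarrow{\,a\,}H^0(C,\F)\otimes H^1(\E)\xrightarrow{\,b\,}H^1(\F\otimes\E)\xrightarrow{\,c\,}H^2(\E^\vee\otimes\E)\cong\CC\to0, \]
the last arrow being surjective because $H^0(C,\F)\otimes H^2(\E)=0$. Therefore $h^1(\F\otimes\E)=1+\dim\coker(a)$, and the entire statement reduces to the surjectivity of the map $a$ induced by the inclusion $\E^\vee\hookrightarrow H^0(C,\F)\otimes\O_S$.

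When $\eps=0$ this is immediate: since $H^1(\O_S)=0$, the defining sequence of $\E^\vee$ gives $H^1(\E^\vee)=0$ and hence $H^1(\E)\cong H^1(\E^\vee)^\vee=0$; the target of $a$ vanishes, so $\coker(a)=0$ and $\dim\Hom(\E,\G)=1$.

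The case $\eps=1$ is the heart of the matter, and I expect it to be the main obstacle. Now $H^1(\O_S)\neq0$, so $H^1(\E)$ need not vanish and one must genuinely prove that $a$ is surjective, equivalently that $b=0$. The plan is to trace the classes in $H^1(\E)$ back to $H^1(\O_S)$ through the blow-up identification \eqref{eq:caz}, $\E^\vee\cong f_*\E_{\tC,A}^\vee$, and to identify $\coker(a)$ with the failure of maximal rank of the coboundary map $Q=\kappa^\vee\circ\mu_{1,A}$ of the Pareschi sequence \eqref{eq:brasile}, as computed in Proposition \ref{prop:pareschi}. Granting this identification, one concludes by genericity: for $C$ general in $Y_{\delta,k+\eps}$ the Kodaira--Spencer map $\kappa$ is controlled by Theorem \ref{thm:exist} (the family has the expected dimension and the nodes are non-neutral), while for a general pencil $A$ the Gaussian map $\mu_{1,A}$ has maximal rank; hence $Q$ has maximal rank, $\coker(a)=0$ and $h^1(\F\otimes\E)=1$. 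The delicate point—and the step I expect to require the most care—is precisely the identification of $\coker(a)$ with the Gaussian-theoretic cokernel of $Q$, together with the verification that the maps $\kappa$ and $\mu_{1,A}$ have maximal rank for general $(C,A)$; this is exactly where the results of Section \ref{sec:LM} on the sheaves of $\M$ and Proposition \ref{prop:pareschi} are brought to bear.
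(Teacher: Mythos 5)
Your opening reduction is sound and is essentially Serre--dual to the paper's: the paper rewrites $\Hom(\E_{C,\nu_*A},\ext^1_{\O_S}(\nu_*A,\O_S))$ as $H^0(\E_{C,\nu_*A}^\vee\*\omega_C\*(\nu_*A)^D)$ via \eqref{eq:andreas} and exhibits the one-dimensional subspace $H^0(\E_{C,\nu_*A}^\vee\*\E_{C,\nu_*A})$ coming from \eqref{eq:defEdual}, which matches your $h^1(\F\*\E)=1+\dim\coker(a)$. Your $K3$ case is complete and coincides with the paper's ($h^1(\O_S)=0$ kills $H^1(\E^\vee)$, hence $H^1(\E)$). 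Two caveats even here: you invoke Proposition \ref{prop:importante} for simplicity of $\E$, but that proposition is itself proved using Proposition \ref{tecnica} (it is what makes the map $h$ generically injective there), so this citation is circular; simplicity must be obtained independently, as the paper does, from the fact that $\{L\}$ contains no reducible or non-reduced members.

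The genuine gap is the abelian case, which you explicitly leave as a plan. The step you defer --- identifying $\coker(a)$ with the kernel/cokernel of $Q$ and proving $Q$ injective --- is precisely the mathematical content of the proposition, and your stated strategy for closing it is not quite right. The paper proceeds by embedding $H^0(C,\E_{C,\nu_*A}^\vee\*\omega_C\*(\nu_*A)^D)$ into $H^0(\tC,\E_{\tC,A}^\vee\*\omega_{\tC}\*A^\vee)$ via \eqref{eq:caz} and \cite[Lemma 4.6]{BP}, and then uses the Pareschi sequence \eqref{eq:brasile} to reduce to the injectivity of the coboundary $Q$. By Proposition \ref{prop:pareschi}, $Q=\kappa^\vee\circ\mu_{1,A}$ up to scalar. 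Your claim that one concludes because ``for a general pencil $A$ the Gaussian map $\mu_{1,A}$ has maximal rank'' misses the point: for a pencil, $\mu_{1,A}$ is always injective on $\ker\mu_{0,A}$, so its rank carries no information about the composition. What is actually needed is that $\mu_{1,A}^\vee\circ\kappa$ is surjective, i.e.\ that the image of $\psi:V_{\{L\},\delta}(S)\to\M_g$ is transverse to $\M^1_{g,k+1}$ at $[C]$; this is deduced in the paper from the fact (Theorem \ref{thm:exist}) that $Y_{\delta,k+1}$ has exactly codimension $-\rho(p-\delta,1,k+1)=\dim\ker\mu_{0,A}$ in the Severi variety, together with $A$ being an isolated reduced point of $G^1_{k+1}(\tC)$ so that $\dim\ker\mu_{0,A}$ equals $-\rho$ (the case $\rho\geq 0$ being trivial since then $\ker\mu_{0,A}=0$). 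Without this transversality input --- and without actually carrying out the identification of your $\coker(a)$ with $\ker Q$ through the blow-up --- the abelian case remains unproven.
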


\begin{proof}
 By \eqref{eq:andreas}, we have
$\Hom(\E_{C,\nu_*A},\ext^1(\nu_*A,\O_S)) \cong H^0(\E_{C,\nu_*A}^\vee \* \omega_C \* (\nu_*A)^D)$ and, by \eqref{eq:defEdual} tensored with $\E_{C,\nu_*A}^{\vee}$,  the latter contains $H^0(\E_{C,\nu_*A}^\vee \*\E_{C,\nu_*A})\simeq\mathbb{C}$, where the isomorphism follows from the fact that $\E_{C,\nu_*A}$ is simple when $(S,L) $ is general as in Theorem \ref{thm:exist} (this is standard, cf., e.g.,  \cite{P,ck,klm}, and follows from the fact that $\{L\}$ does not contain reducible or nonreduced members).

If $S$ is $K3$ the result is well-known and due to the fact that $h^1(\O_S)=0$ yields $h^1(\E_{C,\nu_*A}^{\vee})=0$ by \eqref{eq:defE2}. 
It remains to treat the case where $S$ is abelian. We have $\omega_C \* (\nu_*A)^D \cong \nu_*(\omega_{\tC} \* A^{\vee})$ by \cite[Lemma 4.6]{BP}. Hence, by \eqref{eq:caz}, there is a natural morphism
\[ \xymatrix{
\E_{C,\nu_*A}^{\vee} \* \omega_C \* (\nu_*A)^D \cong f_*(\E_{\tC,A}^{\vee}) \* 
   \nu_*(\omega_{\tC} \* A^{\vee}) \ar[r] & \nu_*(\E_{\tC,A}^\vee \* \omega_{\tC} \* A^{\vee}),}
\]
which is injective as the left hand side is torsion free on $C$ and the map is an isomorphism outside of $N$. In particular, we get an inclusion
\[  H^0(C,\E_{C,\nu_*A}^\vee \* \omega_C \* (\nu_*A)^D) \sub H^0(C,\nu_*(\E_{\tC,A}^\vee \* \omega_{\tC} \* A^{\vee})) \cong H^0(\tC,\E_{\tC,A}^\vee \* \omega_{\tC} \* A^{\vee}),\]
and thus it is enough to prove the injectivity of the first coboundary map $Q$ 
of \eqref{eq:brasile}.

If $\rho(p-\delta,1,k+1)\geq 0$, then Theorem \ref{thm:exist} yields $\ker\mu_{0,A}\simeq H^0(\tC, \omega_{\tC} \* (A^{\vee})^{\* 2} )=0$ and $Q$ is automatically injective.

If $\rho(p-\delta,1,k+1)<0$, then $\dim\ker\mu_{0,A}=-\rho(p-\delta,1,k+1)$ because, by Theorem \ref{thm:exist}(iii),  $A$ defines an isolated and reduced point of $G^1_{k+1}(\tC)$. By Proposition \ref{prop:pareschi},
we need to show that $\kappa^\vee\circ \mu_{1,A}$ is injective, or equivalently, $\mu_{1,A}^\vee\circ \kappa$ is surjective. Since $A$ is a pencil, then the map $\mu_{1,A}$ is injective and its dual sits in the following short exact sequence:
\[
\xymatrix{
0 \ar[r] & T_{\left[\tC\right]}\M^1_{g,k+1}  \ar[r] &  T_{\left[\tC\right]}\M_g  \ar[r]^{\hspace{-0.6cm}\mu_{1,A}^\vee} &  \N_{\M^1_{g,k+1}/\M_g}\vert _{[\tC]} \ar[r] &   0,
}
\]
cf. \cite[pp. 807--824]{ACG}. By Theorem \ref{thm:exist}, $Y_{\delta,k+1}$ has codimension $-\rho(p-\delta,1,k+1)$ in the Severi variety $V_{\{L\},\delta}(S)$, hence the image of the natural map $\psi:V_{\{L\},\delta}(S)\to \M_g$ is transversal to $\M^1_{g,k+1}$ around $[C]$. This forces $\mu_{1,A}^\vee\circ \kappa$ to be surjective.
\end{proof}


We now fix notation that will be used in the construction of a component of the locus in $S^{[k]}_\eps$ covered by  rational curves of class $R_{p,\delta,k}$.

Let $C$ and $A$ be as in Proposition \ref{tecnica}. Since $\nu_*A$ is globally generated in this case (by Theorem \ref{thm:exist} and Lemma \ref{lemma:abovelemma}), the {\it Mukai vector} of the Lazarsfeld-Mukai bundle $\mathcal{E}_{C,\nu_*A}$ is 
\begin{equation}\label{mukaivector}
v_{p,\delta,k}:=v(\mathcal{E}_{C,\nu_*A})=(2, c_1(L),\chi+2(\eps-1)),\,\,\,\textrm{with}\,\,\chi:=\chi(\E_{C,\nu_*A})=p-\delta-k+3-5\eps.
\end{equation} 
 
If $\Pic(S)  \cong \ZZ[L]$, which holds off a countable union of proper closed subvarieties of the moduli space of pairs $(S,L)$, 
then $\mathcal{E}_{C,\nu_*A}$ is stable with respect to the polarization $L$ (as in, e.g., \cite[Proposition A.2]{klm}).

Let $\M$ be the component of the moduli space of Gieseker $L$-semistable torsion free sheaves on $S$ with Mukai vector $v_{p,\delta,k}$ as in \eqref{mukaivector} that  contains $\mathcal{E}_{C,\nu_*A}$. We recall that $\M$ is a projective symplectic manifold admitting a symplectic resolution of dimension:
\begin{equation}\label{eq:dimM'}
 \dim \M=2p-4\chi+(1-\eps)8,
\end{equation}
with $\chi$ as in \eqref{mukaivector}.

Every torsion free sheaf $[\E]\in \M$ satisfies $h^2(\E)=0$ because of Serre duality and inequality $\mu_L(\E)>0$. Furthermore, as soon as $h^0(\E)\geq 2$, then for all $V\in G(2,H^0(\E))$ the evaluation map $ev:V\otimes\O_S\to \E$ is injective. Indeed, if this were not the case, its kernel would be isomorphic to $\O_S(-D)$ for an effective divisor $D$ and we would find a short exact sequence: 
$$0\longrightarrow\O_S(D)\longrightarrow\E\longrightarrow \det\E(-D)\otimes I_\xi\longrightarrow0,$$
where $\xi\subset S$ is a $0$-dimensional subscheme. As $\det\E$ is indecomposable and $h^2(\E)=0$, then $D=0$ and this contradicts the fact that $V$ is generated by $2$ linearly independent sections of $\E$.

The following two results determine properties of $\M$ that will play a fundamental role in our construction of uniruled subvarieties of $S^{[k]}_{\eps}$. They can also be seen as applications of Proposition \ref{tecnica} to moduli spaces of stable sheaves on $S$ and are therefore interesting in themselves.

\begin{prop}\label{prop:importante}
 Let $p,\delta,k$ be integers such that \eqref{eq:boundA} is satisfied and let $v$ and $\chi$ be as in \eqref{mukaivector}. Then there is a (nonempty) irreducible component $\M$ of the moduli space of $L$-stable sheaves on $S$ with Mukai vector $v_{p,\delta,k}$ such that the following properties are satisfied:
\begin{itemize}
\item[(i)] If $\chi\geq2\delta+2$, then $h^1(\E)=h^1(\E\otimes I_\tau)=0$ for a general pair $([\E],\tau)\in\M \x S^{[\delta]}$. 
\item[(ii)] If $\chi<2\delta+2$, the locus 
$$X:=\{([\E],\tau)\in \M  \x  S^{[\delta]}\,\,\vert\,\,h^0(\E\otimes I_\tau)\geq 2\}$$ is nonempty, with an irreducible component $X_0$ whose general point satisfies $h^0(\E\otimes I_\tau)=2$ and $h^1(\E\otimes I_\tau)=2+2\delta-\chi$. Furthermore, $X_0$ is birational to a component of the relative Brill-Noether variety 
$$
\G^1_{k+\eps}\left(V^{k+\eps}_{\{L\},\delta}\right):=\{([C],\mathfrak{g})\,|\,[C]\in V^{k+\eps}_{\{L\},\delta},\, \mathfrak{g}\in G^1_{k+\eps}(\tC),\textrm{ with }\tC\textrm{ the normalization of }C\},
$$
having the expected dimension $2(k-1+\eps)$. \color{black}
\end{itemize}
\end{prop}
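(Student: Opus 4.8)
There is a nonempty irreducible component $\M$ of the moduli space of $L$-stable sheaves on $S$ with Mukai vector $v_{p,\delta,k}$ such that (i) when $\chi \geq 2\delta+2$ a general pair $([\E],\tau)\in \M\x S^{[\delta]}$ satisfies $h^1(\E)=h^1(\E\otimes I_\tau)=0$, and (ii) when $\chi<2\delta+2$ the locus $X=\{([\E],\tau): h^0(\E\otimes I_\tau)\geq 2\}$ is nonempty with an irreducible component $X_0$ of general point satisfying $h^0(\E\otimes I_\tau)=2$, $h^1(\E\otimes I_\tau)=2+2\delta-\chi$, and $X_0$ is birational to a component of $\G^1_{k+\eps}(V^{k+\eps}_{\{L\},\delta})$ of expected dimension $2(k-1+\eps)$.
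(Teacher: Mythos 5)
Your proposal contains no proof at all: it is a verbatim restatement of the proposition, with no argument supplied for either part. Every substantive step is missing. In particular you need to (1) specify which component $\M$ you take --- the paper takes the one containing the Lazarsfeld--Mukai bundle $\E_{C,\nu_*A}$ for $C$ general in the component $Y_{\delta,k+\eps}$ of Theorem \ref{thm:exist} and $A$ a general $g^1_{k+\eps}$ on its normalization, which is what makes $X$ nonempty since $([\E_{C,\nu_*A}],N)\in X$; (2) explain why $X$ is a determinantal locus of expected codimension $2(2\delta-\chi+2)$, hence expected dimension $2(k-1+\eps)$; and (3) produce the comparison with the relative Brill--Noether variety. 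The paper does (3) by introducing the space $\G$ of triples $([\E],\tau,V)$ with $V\in G(2,H^0(\E\otimes I_\tau))$, sending such a triple to the nodal curve $\Gamma$ along which the evaluation map $V\otimes\O_S\to\E$ drops rank together with the induced $g^1_{k+\eps}$ on its normalization, and invoking Proposition \ref{tecnica} (the computation $\dim\Hom(\E_{C,\nu_*A},\ext^1_{\O_S}(\nu_*A,\O_S))=1$) to see that this map $h:\G\dashrightarrow\G^1_{k+\eps}(V^{k+\eps}_{\{L\},\delta})$ is generically injective. The key dimension count
\[
2(k-1+\eps)=\dim Z=\dim X+2\bigl(h^0(\E\otimes I_\tau)-2\bigr)
\]
then forces, in case (i), $h^0(\E\otimes I_\tau)=\chi-\delta\cdot 2/2$ to equal $\chi(\E\otimes I_\tau)$ so that $h^1(\E)=h^1(\E\otimes I_\tau)=0$ generically, and in case (ii), $\dim X=\expdim X$ and $h^0(\E\otimes I_\tau)=2$ at a general point, whence $h^1(\E\otimes I_\tau)=2+2\delta-\chi$ by Riemann--Roch and $h^2(\E\otimes I_\tau)=0$. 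None of these ideas, nor any substitute for them, appears in your submission, so there is nothing to evaluate as a proof.
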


\begin{proof}
Let $\M$ contain $\E_{C,\nu_*A}$ with $C$ and $A$ as in Proposition \ref{tecnica} and let $X\subset \M  \x  S^{[\delta]}$ parametrize pairs $([\E],\tau)$ such that $h^0(\E\otimes I_\tau)\geq 2$. Trivially, $X$ coincides with $\M \x  S^{[\delta]}$ as soon as $\chi\geq2\delta+2$. If instead $\chi<2\delta+2$, then $X$ is a closed subscheme which can be defined using fitting ideals and its expected codimension is $2(2\delta-\chi+2)$, whence its expected dimension is $2(k-1+\eps)$.
 Furthermore, $X$ is nonempty because $([\E_{C,\nu_*A}],N)$ lies in it. 
 
 Let $\G$ be the family of triples $([\E],\tau,V)$ with $([\E],\tau)\in X$ and $V\in G(2,H^0(\E\otimes I_\tau))$, and let $p:\G\to X \x  S^{[\delta]}$ be the natural projection. Existence of $\G$ follows from existence of a moduli space of $\O_S$-stable coherent systems; indeed, since $\M$ parametrizes torsion free sheaves of rank $2$ and indecomposable first Chern class, then any pair $([E],V)$ with $[E]\in\M$ and $V\in G(2, H^0(E))$ is automatically an $\O_S$-stable coherent system (proceed as in \cite[\S 3]{LC2}). 

Since $([\E_{C,\nu_*A}],N,H^0(C,\nu_*A)^\vee)$ lies in $\G$, then for a general element $([\E],\tau,V)\in \G$ the evaluation map $ev:V\otimes \O_S\to\E$ is injective and drops rank along a $\delta$-nodal curve $ \Gamma$, which is singular precisely at the support of $\tau$. Furthermore, the cokernel $B$ of $ev$ is torsion free of rank $1$ on $\Gamma $ and is not locally free exactly along $\mathrm{Sing}(\Gamma)$. We set $B_1:=\ext^1(B,\O_S)$. If  $\eta :\widetilde{ \Gamma}\to  \Gamma$ is the normalization map, we can write $B_1={ \eta}_*A_1$ for some $A_1\in\Pic^{k+ \eps \color{black}}(\widetilde{ \Gamma })$. From the short exact sequence
\begin{equation} \label{eq:conv}
0\longrightarrow V\otimes\O_S\longrightarrow\E\longrightarrow B\longrightarrow 0,
\end{equation}
we get that $H^0(A_1)^\vee\simeq H^0(B_1)^\vee\simeq H^1(B)\twoheadrightarrow V$ and hence $\mathfrak{g}=(A_1, V^\vee)$ defines a $g^1_{k+ \eps }$ on $\widetilde{\Gamma}$. We thus obtain a rational map $h:\G\dashrightarrow \G^1_{k+ \eps }(V^{k+ \eps }_{\{L\},\delta})$. Our hypotheses, along with Theorem \ref{thm:exist}, ensure that $(C,\nu_*A)$ lies in a component $Z$ of the image of $h$ of dimension: 
\begin{equation}\label{dimensione}
\dim\,Z=\dim\,V^{k+ \eps}_{\{L\},\delta}+\max\{0,\rho(p-\delta,1,k+ \eps)\}=2(k-1+\eps).
\end{equation}
On the other hand, Proposition \ref{tecnica} implies that $h$ is generically injective and hence, for a general $([\E],\tau)\in X$, we have:
\begin{equation}\label{parametri}
\dim\,Z=\dim\, X+2(h^0(\E\otimes I_\tau)-2).
\end{equation}

If $\chi\geq 2\delta+2$, then $X= \M \x S^{[\delta]}$ and \eqref{eq:dimM'}, \eqref{dimensione} and  \eqref{parametri}yield:
\begin{equation}
2(k-1+\eps)  =  \dim  Z  \geq  \dim\M+\dim S^{[\delta]}+2(\chi(\E\otimes I_\tau)-2)=2(k-1+\eps);
\end{equation}
thus, equality holds and a general $([\E],\tau)\in \M \x  S^{[\delta]}$ satisfies $h^1(\E)=h^1(\E\otimes I_\tau)=0$.

If instead $\chi<2\delta+2$, then $\mathrm{expdim}\,X= 2(k-1+\eps)$ and, by \eqref{dimensione} and \eqref{parametri}, one obtains  $\dim X=\mathrm{expdim}X$ and $h^0(\E\otimes I_\tau)=2$ for a general $([\E],\tau)\in X$.
\end{proof}
\begin{lemma}\label{h1}
Under the same hypotheses as in Proposition \ref{prop:importante}, assume moreover that $S$ is abelian and $\chi\geq 4$. Then a general $[\E]\in\M$ satisfies $h^1(\E\* \L_0)=0$ for all $\L_0\in\Pic^0(S)$.
\end{lemma}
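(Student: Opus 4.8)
The plan is to show that a general $[\E]\in\M$ satisfies the index condition $\mathrm{IT}_0$, namely $h^i(\E\* \L_0)=0$ for all $i>0$ and all $\L_0\in\Pic^0(S)$; this is equivalent to the statement since $h^2(\E\* \L_0)=0$ is already available (Serre duality together with $\mu_L(\E\* \L_0)=\mu_L(\E)>0$, the degree-$0$ twist not changing the slope). First I would record two elementary facts. Because $c_1(\L_0)=0$ in $H^2(S,\ZZ)$ for $\L_0\in\Pic^0(S)$, a Riemann--Roch computation on $S$ (where $\operatorname{td}(S)=1$) gives $\chi(\E\* \L_0)=\chi$; hence $h^1(\E\* \L_0)=h^0(\E\* \L_0)-\chi$ and the desired vanishing is $h^0(\E\* \L_0)=\chi$. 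Moreover, tensoring by $\L_0$ preserves both the Mukai vector $v_{p,\delta,k}$ and $L$-stability, so it defines an algebraic action of $\Pic^0(S)$ on $\M$. By semicontinuity the locus $\{([\E],\L_0):h^1(\E\* \L_0)\neq 0\}$ is closed in $\M\x\Pic^0(S)$, and since $\Pic^0(S)$ is proper its image
\[
B:=\{[\E]\in\M \,:\, h^1(\E\* \L_0)\neq 0 \text{ for some } \L_0\in\Pic^0(S)\}
\]
is closed. Thus the $\mathrm{IT}_0$ locus $\M\smallsetminus B$ is open, and it suffices to prove $B\neq\M$.

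The reduction step is then to observe that $B$ is $\Pic^0(S)$-invariant and coincides with the saturation $\Pic^0(S)\cdot N$ of the non-vanishing locus
\[
N:=\{[\E]\in\M \,:\, h^1(\E)\neq 0\}=\{[\E]\in\M\,:\, h^0(\E)\geq\chi+1\},
\]
since $h^1(\E\* \L_0)\neq 0$ says exactly that the translate $\E\* \L_0$ lies in $N$. Consequently $\dim B\leq \dim N+\dim\Pic^0(S)=\dim N+2$, and $B\neq\M$ follows once one shows $\codim_\M N\geq 3$.

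To bound $N$ I would stratify it by the value of $h^0$ and run the Lazarsfeld--Mukai correspondence exactly as in the proof of Proposition \ref{prop:importante}. Since $h^2\equiv 0$ on $\M$, the locus $\{h^0(\E)\geq \chi+j\}$ is the corank-$\geq j$ degeneracy locus of a map of vector bundles of index $\chi$ (the two-term complex computing $R\pi_*$ of the universal sheaf), so its expected codimension is $j(\chi+j)$. The point is that the actual codimension meets this bound, and this I would extract from the genericity already present in the paper: a sheaf in $N$ together with a pencil of its sections produces, via the evaluation map, a $\delta$-nodal curve in $\{L\}$ carrying a $g^1_{k+\eps}$ on its normalization, with the extra sections recording a Brill--Noether jump; since by Theorem \ref{thm:exist}(ii)--(iii) the relevant Brill--Noether varieties have the expected dimension and are reduced, the dimension count of Proposition \ref{prop:importante} forces $\{h^0(\E)\geq\chi+j\}$ to have codimension $j(\chi+j)$ in $\M$. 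For $j\geq 1$ and $\chi\geq 4$ this is $\geq \chi+1\geq 5$, so in particular $\codim_\M N\geq 3$, completing the argument.

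The main obstacle is precisely this codimension estimate. Determinantal geometry yields only the upper bound $\codim_\M N\leq \chi+1$ for free; the delicate point is the matching lower bound, that is, proving that no component of the jumping locus exceeds its expected dimension and that the Lazarsfeld--Mukai analysis accounts for all of $N$ rather than merely its distinguished component. The abelian hypothesis makes this harder than on a $K3$ (where $h^1(\O_S)=0$ forces the vanishing directly from \eqref{eq:defE2}), which is why Propositions \ref{prop:pareschi} and \ref{tecnica} are needed to control the cohomology of Lazarsfeld--Mukai sheaves in the irregular setting. The hypothesis $\chi\geq 4$ enters exactly here, to secure a safe margin over $\dim\Pic^0(S)+1=3$ in the estimate and thereby control the full $\Pic^0(S)$-saturation $B$.
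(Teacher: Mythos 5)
Your reduction is exactly the one the paper makes: twisting by $\L_0\in\Pic^0(S)$ preserves $\M$, the bad locus is the $\Pic^0(S)$-saturation of $\F:=\{[\E]\in\M \,:\, h^1(\E)\neq 0\}$, and it suffices to prove $\codim_{\M}\F\geq 3$. The gap is in the codimension estimate itself, which is the entire content of the lemma. As you concede, the determinantal description only gives the wrong-direction inequality $\codim\F\leq\chi+1$; the matching lower bound is then asserted to follow from ``the genericity already present in the paper'', but none of the cited results delivers it. Theorem \ref{thm:exist} controls the general element of each component of $\{L\}^1_{\delta,k+\eps}$ and the varieties $G^1_{k+\eps}(\tC)$; it says nothing about the loci where $G^1_{k+\eps}$ jumps over special curves, nor about the higher loci $G^r_{k+\eps}$ with $r\geq 2$, which is where a sheaf with large $h^1$ sends you (the pencil $V$ then sits inside a strictly larger $H^0(A_1)$). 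Likewise Proposition \ref{tecnica} (generic injectivity of the Lazarsfeld--Mukai correspondence) is proved only for a general curve in the distinguished component $Y_{\delta,k+\eps}$ with a general $g^1_{k+\eps}$, so it cannot be applied to the special pairs arising from the jumping strata. Worse, even granting all of these genericity statements, the dimension count you invoke does not reach the expected codimension $j(\chi+j)$: the stratum $\{h^0(\E)=\chi+j\}$ gains only $2j$ in the Grassmannian fibre $G(2,H^0(\E\otimes I_\tau))$ over the generic case, so comparing with the $2(k-1+\eps)$-dimensional relative Brill--Noether variety yields at best $\codim\geq 2j$, which for $j=1$ equals $2=\dim\Pic^0(S)$ and therefore fails to rule out that the saturation covers $\M$.

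The paper's proof of the bound is a short parameter count of a completely different nature, via elementary modifications. One parametrizes extensions $0\to\O_S\to\E'\to\E\to 0$ with $[\E]\in\F$: since $\Hom(\E,\O_S)=0$, the fibre over $[\E]$ is $\PP(H^1(\E))$, while $\E'$ is stable with Mukai vector $v'=v_{p,\delta,k}+(1,0,0)$, so the same total space fibres over $\M(v')$, which has dimension $\dim\M-2\chi$, with fibres contained in Quot-schemes of dimension at most $\dim\Hom(\O_S,\E)=h^0(\E)=\chi+h^1(\E)$. Comparing the two fibrations gives $\codim_{\M}\F\geq\chi-1>2$ for $\chi\geq 4$. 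If you want to salvage your framework you would need to replace the Brill--Noether appeal by an argument of this kind; as written, the key step is unproved and the proposed mechanism cannot produce the required bound.
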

\begin{proof}
It is enough to show that the locus $\F:=\{[\E]\in\M\,|\, h^1(\E)\neq 0\}$ has codimension greater than two in $\M$. We perform a parameter count as in \cite[Prop. 4.2 and \S 7]{LC0}. Let $\G_1$ be the parameter space of extensions
\begin{equation}\label{extension}
0\longrightarrow\O_S\longrightarrow\E'\stackrel{\beta}{\longrightarrow}\E\longrightarrow0,
\end{equation}
with $[\E]\in \F$. Since $\Hom(\E,\O_S)=0$ for all $\E\in\M$, the fibre of the natural map $\pi_1:\G_1\to\F$ over $[\E]$ is isomorphic to $\mathbb{P}(H^1(\E))$. A sheaf $\E'$ as in \eqref{extension} is stable; let $v':=v_{p,\delta,k}+(1,0,0)$ be its Mukai vector and denote by $\pi_2:\G_1\to \M(v')$ the natural projection mapping \eqref{extension} to $[\E']$. The fibre of $\pi_2$ over $[\E']$ is the Quot-scheme $\mathrm{Quot}_S(\E',P)$, where $P$ is the Hilbert polynomial of $\E$. The following upper bound for the dimension of $\mathrm{Quot}_S(\E',P)$ at $[\beta:\E'\to \E]$ is well-known:
$$
\dim_{[\beta]}\mathrm{Quot}_S(\E',P)\leq \dim\Hom(\O_S,\E)=h^0(\E).$$
It follows that:
$$
\dim\M-2\chi=\dim \M(v')\geq\dim\mathrm{Im}\pi_2\geq \dim\F-\chi-1,
$$
and $\codim_{\M}\F\geq\chi-1>2$ as soon as $\chi\geq 4$.
\end{proof}

The next lemma concerns every component of any moduli space of rank-$2$ Gieseker semistable torsion free sheaves on $S$.
\begin{lemma}\label{badlocus}
Let $S$ be an abelian or $K3$ surface and let $\M$ be a component of the moduli space of rank-$2$ Gieseker semistable torsion free sheaves on $S$ with Mukai vector \mbox{$v=(2,c_1, \chi+2(\eps-1))$}. For every $[\E]\in\M$, denote by $\mathfrak{S}(\E)$ the cokernel of the injection $\E\hookrightarrow \E^{\vee\vee}$ and by $l_{\E}$ its length. Then every irreducible component of the locus
\begin{equation}\label{nlf}
\M_q:=\{[\E]\in\M\,|\,l_{\E}=q\}
\end{equation}
has codimension at least $q$ in $\M$.
\end{lemma}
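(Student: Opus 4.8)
The plan is to run a dimension count that compares each member of $\M_q$ with its reflexive hull.

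Fix $[\E]\in\M_q$. Because $S$ is a smooth surface, the reflexive sheaf $\E^{\vee\vee}$ is locally free of rank $2$, and by construction there is a canonical exact sequence
\[
0\longrightarrow\E\longrightarrow\E^{\vee\vee}\longrightarrow\mathfrak{S}(\E)\longrightarrow0
\]
with $\mathfrak{S}(\E)$ of length $q$ supported on the finite locus where $\E$ fails to be locally free. Since $\E$ is Gieseker semistable it is $\mu$-semistable, and as $\E$ and $\E^{\vee\vee}$ coincide outside a $0$-dimensional set they share the same slope; a standard saturation argument then shows that $\E^{\vee\vee}$ is $\mu$-semistable as well. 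Its Mukai vector is $w:=v(\E^{\vee\vee})=v+(0,0,q)$, and with respect to the Mukai pairing $\langle\,,\,\rangle$ one computes $\langle w,w\rangle=\langle v,v\rangle-4q$.

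I would then count parameters through two families. Let $\mathcal{B}$ be a parameter scheme for the $\mu$-semistable locally free sheaves $\F$ with $v(\F)=w$ that arise as reflexive hulls of members of $\M_q$, and let $\mathcal{Q}\to\mathcal{B}$ be the relative $\mathrm{Quot}$-scheme whose fibre over $[\F]$ parametrizes length-$q$ quotients $\F\twoheadrightarrow T$. Assigning to such a quotient its kernel $\E':=\ker(\F\to T)$, which is torsion free with $(\E')^{\vee\vee}\cong\F$ and $l_{\E'}=q$, produces a surjection $\mathcal{Q}\to\M_q$, so that $\dim\M_q\leq\dim\mathcal{Q}$. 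As $\F$ has rank $2$ on a surface, every fibre of $\mathcal{Q}\to\mathcal{B}$ has dimension at most $3q$: the generic length-$q$ quotient is a direct sum of one-dimensional quotients at $q$ distinct points, each contributing $2+1$ parameters, and this is the top-dimensional stratum. Hence $\dim\M_q\leq\dim\mathcal{B}+3q$.

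The heart of the argument is the bound $\dim\mathcal{B}\leq\langle w,w\rangle+2$. For the $\mu$-stable sheaves this is exactly the smoothness and dimension of the moduli space of stable sheaves with Mukai vector $w$ on a $K3$ or abelian surface; the strictly $\mu$-semistable members, which need not be Gieseker semistable and hence need not lie in a single coarse moduli space, are handled by boundedness together with a Jordan--H\"older/extension count that keeps their family within the same dimension. Granting this and using the dimension formula $\dim\M=\langle v,v\rangle+2$ (cf. \eqref{eq:dimM'}), I obtain
\[
\dim\M_q\leq\dim\mathcal{B}+3q\leq(\langle w,w\rangle+2)+3q=(\langle v,v\rangle+2)-4q+3q=\dim\M-q.
\]
Since this applies to every irreducible component of $\M_q$, the desired estimate $\codim_\M\M_q\geq q$ follows. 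The main obstacle is precisely this last dimension bound for $\mathcal{B}$ along the strictly semistable locus; the remaining ingredients---local freeness of the reflexive hull, $\mu$-semistability of $\E^{\vee\vee}$, and the Quot-scheme estimate $3q$---are standard.
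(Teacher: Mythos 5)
Your argument is correct and is essentially the paper's proof: the paper defines the map $\alpha:\M_q\to\M(v_q)$, $[\E]\mapsto[\E^{\vee\vee}]$ with $v_q=v+(0,0,q)$, observes that its fibre over a general point of the image is a Quot-scheme of length-$q$ quotients, of dimension at most $3q$ by a result of O'Grady, and concludes from $\dim\M(v_q)=\dim\M-4q$. The only differences are presentational --- you run the count through a Quot-scheme bundle surjecting onto $\M_q$ rather than through the fibres of $\alpha$, and you are in fact more explicit than the paper about the (Gieseker) semistability of the double dual, a point the paper passes over by working directly with the moduli space $\M(v_q)$.
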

\begin{proof}
Assume that $\M_q$ is non-empty. One defines a map $\alpha:\M_q\to\M(v_q)$, where $v_q:=v+(0,0,q)$, which maps $[\E]\in \M$ to $[\E^{\vee\vee}]\in \M(v_q)$. The fibre of $\alpha$ over a general vector bundle $F\in \mathrm{Im}\alpha$ is isomorphic to the Quot-scheme $\mathrm{Quot}_S(F,q)$ of zero dimensional quotient sheaves of $F$ of length $q$; by \cite[Prop. 6.0.1]{kieran}, this Quot-scheme has dimension at most $3q$. Hence,
\begin{equation}
\dim \M-4q=\dim\M(v_q)\geq\dim\mathrm{Im}\alpha=\dim\M_q-\mathrm{Quot}_S(F,q)\geq\dim\M_q-3q,
\end{equation} 
and this concludes the proof.
\end{proof}

\section{Algebraically coisotropic subvarieties of IHS manifolds}\label{sec:sei}

We are now ready to prove our results concerning existence of uniruled subvarieties of $S^{[k]}_{\eps}$. Let $(S,L)$ be a {\it very general} primitively polarized $K3$ or abelian surface of genus $p$, in the sense that it satisfies Theorem \ref{thm:exist} and $\Pic(S) \cong \ZZ[L]$. For $p,\delta,k$ satisfying \eqref{eq:boundA}, let $[C] \in Y_{\delta,k+\eps}$ be general and let $A$ be a general complete $g^1_{k+\eps}$ on its normalization.

Let $\M$ be the component of $\M(v_{p,\delta,k})$ as in the previous section, with $[\E_{C,\nu_*A}]\in \M$. Assume $\chi \geq 2\delta+2$ and set $X:= \M \x S^{[\delta]}$. We denote by $\P$ the parameter space for triples $([\E],\tau,[s])$ with $([\E],\tau)\in X$ and $[s]\in\mathbb{P}(H^0(\E\otimes I_\tau))$; existence of $\P$ again follows from existence of a moduli space of $\O_S$-stable coherent systems and there is a natural map $q:\P\to X$. For any $([\E],\tau,[s]) \in \P$, the section $s$ vanishes along a finite set because otherwise we would have $\mathrm{Hom}(\det\E,\E^{\vee\vee})\neq0$ and this would contradict the $\mu_L$-stability of $\E$. Hence, we have a short exact sequence:
\begin{equation}\label{ses2'}
\xymatrix{
0 \ar[r] & \O_S \ar[r]^s & \E \ar[r] & \det \E\otimes I_{W_s} \ar[r] & 0,
}
\end{equation}
where $W_s$ is a $0$-dimensional subscheme of $S$ of length $k+\epsilon+\delta$ containing $\tau$. This provides a short exact sequence
\begin{equation}\label{ses3'}
\xymatrix{
0 \ar[r] & \eta_s\ar[r] & \O_{W_s} \ar[r] & \O_{\tau} \ar[r] & 0,
}
\end{equation}
defining $\eta_s$, which is a torsion sheaf whose support is contained in that of ${W_s}$. 

If $\E$ is locally free, the scheme  ${W_s}$ is a 
local complete intersection as it is the zero scheme of $s$. It follows that $\ext^2_{\O_S}(\O_{W_s},\O_S)\cong
\O_{W_s}$ by, e.g., \cite[p. 36]{Fr}.
Applying the functor $\hom_{\O_S}(-,\O_S)$ to \eqref{ses3'}
we therefore obtain a surjection 
$$
\xymatrix{
\O_{W_s} \ar[r] & \ext^2_{\O_S}(\eta_s,\O_S),
}
$$
which yields the existence of a subscheme $Z_s\subset {W_s}$ of length $k+ \eps$ such that $\ext^2_{\O_S}(\eta_s,\O_S) \cong \O_{Z_s}$. This defines a rational map 
\begin{equation}\label{map}
g': \P\dashrightarrow S^{[k+ \eps]}
\end{equation}
mapping a point $([\E],\tau,[s])$, with $\E$ locally free, to $Z_s$. We want to extend $g'$ in codimension $1$. 

Pick a sheaf $[\E]\in \M$ that is not locally free and such that $\mathfrak{S}(\E)$ has length one, i.e., $\mathfrak{S}(\E)\simeq \O_P$ for some $P\in S$. A section $s$ of $\E$ gives a section $s'$ of $\E^{\vee\vee}$ and, having denoted by $W_{s'}$ the vanishing locus of $s'$, one has a short exact sequence:
$$
0\longrightarrow\O_P\longrightarrow\O_{W_s}\longrightarrow\O_{W_{s'}}\longrightarrow0.
$$
If both $[\E]\in\M_1$ (cf. \eqref{nlf}) and $s\in H^0(\E)$ are general, then $W_{s'}$ does not contain $P$ and $P$ is a general point on $S$. Indeed, $H^0(\E)$ is not contained in $H^0(\E^{\vee\vee}\*I_P)$ as soon as $H^1(\E^{\vee\vee}\*I_P)=0$; the vanishing follows from the fact that a general $[F]\in\M(v(\E^{\vee\vee}))$ is generically generated by global sections. For $s\in H^0(\E\*I_\tau)$ with $\tau\in S^{[\delta]}$ such that $P\not\in\Supp(\tau)$, we get $\O_{W_s} = \O_{W_{s'}} \+ \O_P$, whence $W_s$ is a local complete intersection, and we  may find a subscheme  $Z_s\subset {W_s}$ of length $k+ \eps$ as before and set $g'([\E],\tau,[s])=Z_s$ also in this case.

We set 
$$
\M^\circ:=\{[\E]\in \M\setminus\cup_{q\geq 2}\M_q\,|\, h^1(\E^{\vee\vee}\* I_{P})=0\textrm{ if } \mathfrak{S}(\E)\simeq\O_P\},
$$
with $\M_q$ as in \eqref{nlf}; then $\M^\circ$ is open in $\M$ and its complement has codimension at least two by Lemma \ref{badlocus}.  We define 
$$
X^\circ:=\{([\E],\tau)\in X\,|\,[\E]\in \M^\circ, \Supp(\tau)\cap \Supp(\mathfrak{S}(\E))=\emptyset\};
$$ 
the complement of $X^\circ$ in $X$ has codimension at least two. Let $\P^{\circ}$ be  the open subscheme of $q^{-1}(X^\circ)\subset \P$ consisting of those triples $([\E],\tau,s)$ such that either $\E$ is locally free, or the vanishing locus $W_{s'}$ as above does not contain the singular locus of $\E$.  Then, the complement of  $\P^{\circ}$ in $\P$ has codimension at least two and, by the above discussion, the rational map $g'$ in \eqref{map} is well-defined on the whole $\P^\circ$. We denote by 
\begin{equation}\label{map1}
\xymatrix{g: \P^\circ  \ar[r] & S^{[k+ \eps]}}
\end{equation}
the restriction of $g'$ to $\P^\circ$.

We prove the following result.
\begin{thm}\label{thm:contrazioni}
Let $(S,L)$ be a very general primitively polarized $K3$ or abelian surface  of genus $p \geq 2$.
Let $0\leq \delta \leq p-2\eps$ and $k \geq 2$ be integers satisfying
\begin{equation} \label{eq:nuovobound}
 \max\{2\delta+2,4\eps\} \leq \chi:=p-\delta-k+3-5\eps \leq \delta+k+1.
\end{equation} 

Then the morphism $g$ is generically injective. In particular, the locus $\mathrm{Locus}(\mathbb{R}^+R_{p,\delta,k}) \subset S_{\eps}^{[k]}$, with $R_{p,\delta,k}$ as in \eqref{class}, has an irreducible component that is birational to a $\mathbb{P}^{\chi-2\delta-1}$-bundle on a holomorphic symplectic manifold of dimension $2(k+1+2\delta-\chi)$.
\end{thm}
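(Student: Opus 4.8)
The plan is to exhibit $\P$ as a projective bundle, match its dimension with that of the target, deduce finiteness of $g$ from Proposition \ref{bundle_finito}, and finally upgrade finiteness to generic injectivity by reversing the construction. First I would record that, under $\chi\geq 2\delta+2$, Proposition \ref{prop:importante}(i) gives $h^1(\E\otimes I_\tau)=0$ for general $([\E],\tau)\in X:=\M\x S^{[\delta]}$, so that $h^0(\E\otimes I_\tau)=\chi-2\delta=r+1$ with $r:=\chi-2\delta-1$; hence $q:\P\to X$ is generically a $\PP^r$-bundle and $\P$ is irreducible of dimension $\dim X+r$. A direct computation from \eqref{eq:dimM'} and $\chi=p-\delta-k+3-5\eps$ yields $2r+\dim X=2(k+\eps)=\dim S^{[k+\eps]}$, so $g$ is never dominant and its image has dimension at most $\dim\P$.

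I would then verify the hypotheses of Proposition \ref{bundle_finito}, with base $X$ and target $Y=S^{[k+\eps]}$; in the abelian case one works fibrewise over the Albanese map $\Sigma_k$, so that $Y$ becomes the IHS manifold $K^k(S)$ and the base becomes the corresponding symplectic fibre $W$ of $X$, matching dimensions as $2r+\dim W=\dim K^k(S)$ (here Lemma \ref{h1}, valid since $\chi\geq 4\eps$, provides the vanishing needed to control the behaviour over $\Pic^0(S)$). The map $g$ is well defined in codimension one on $\P^\circ$ by construction. Injectivity of $g$ along a general fibre of $q$ is immediate, since for fixed $([\E],\tau)$ the subscheme $Z_s$ recovers $W_s=Z_s\sqcup\tau$, hence the section $s$ up to scalar. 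Injectivity of $g$ on each $\P_{|F_J}$ is of the same nature once the factors selected by $J$ are fixed, the rigidity being supplied by Proposition \ref{tecnica}. Finally, a line $\ell$ in a fibre $\PP(H^0(\E\otimes I_\tau))$ is a pencil and $g(\ell)$ is the associated rational curve, whose class is $R_{p,\delta,k}$ by Lemma \ref{juveladra} (the simple ramification and non-neutrality of the nodes holding for general data by Theorem \ref{thm:exist}); thus the image is a component of $\mathrm{Locus}(\mathbb{R}^+R_{p,\delta,k})$. Proposition \ref{bundle_finito} then gives that $g$ is finite.

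To promote finiteness to generic injectivity, I would argue that, since $g$ is injective along the fibres of $q$, for a general $Z$ in the image the degree of $g$ equals the number of pairs $([\E],\tau)\in X$ admitting a section with $W_s=Z\sqcup\tau$. By the Hartshorne--Serre correspondence each such $\E$ arises as an extension of $\det\E\otimes I_{Z\sqcup\tau}$ by $\O_S$ with the numerical type of $v_{p,\delta,k}$; imposing $[\E]\in\M$ together with the one-dimensionality of $\Hom(\E_{C,\nu_*A},\ext^1_{\O_S}(\nu_*A,\O_S))$ from Proposition \ref{tecnica} forces $(\E,\tau)$ to be unique, whence $\deg g=1$ and $g$ is birational onto its image $T'$. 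The upper bound $\chi\leq\delta+k+1$ is used here to keep $\dim W=2(k+1+2\delta-\chi)\geq 2\delta\geq 0$ and the zero loci $W_s$ of the expected length $k+\eps+\delta$, so that the reconstruction stays in the expected range.

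It then remains to read off the bundle structure. For $\eps=0$ one has $T=T'$ and $W=X$, with $\dim X=2(k+1+2\delta-\chi)$, so $T$ is birational to a $\PP^r$-bundle over the symplectic manifold $W$. For $\eps=1$ the Albanese map restricts to a fibration $T'\to S$ whose fibre $T=T'\cap K^k(S)\subset S_1^{[k]}$ is birational to a $\PP^r$-bundle over the Albanese fibre $W$ of $X$, of dimension $\dim X-2=2(k+1+2\delta-\chi)$. In either case the lines of the $\PP^r$-fibres have class $R_{p,\delta,k}$, so $T\subset S_\eps^{[k]}$ is an irreducible component of $\mathrm{Locus}(\mathbb{R}^+R_{p,\delta,k})$ of the asserted shape. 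The main obstacle is the degree-one statement of the third paragraph: finiteness is formal once Proposition \ref{bundle_finito} applies, but excluding spurious preimages $([\E],\tau)$ over a general $Z$ is delicate --- a priori many nodal curves and extension classes could produce the same length-$(k+\eps)$ subscheme --- and it is precisely the rigidity of Proposition \ref{tecnica} that rules them out.
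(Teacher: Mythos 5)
Your skeleton is right --- exhibit $\P$ as a generic $\PP^{\chi-2\delta-1}$-bundle over $X=\M\times S^{[\delta]}$, invoke Proposition \ref{bundle_finito} for finiteness, then upgrade to degree one --- but the two places where you defer to ``rigidity'' are exactly where the real work lies, and in both places your argument has a genuine gap. First, the hypothesis of Proposition \ref{bundle_finito} that $g$ be generically injective on each $\P_{|F_J}$ is not ``of the same nature'' as injectivity on a fibre of $q$, and it is not supplied by Proposition \ref{tecnica}. The paper proves it in two separate steps: for fixed $\tau$ and varying $\E$ one shows that two non-isomorphic sheaves $\E_1,\E_2$ cannot produce the same vanishing scheme $W_s$, first ruling out $\det\E_1\cong\det\E_2$ via $h^1(\E_1)=0$ and then, in the abelian case, ruling out $\det\E_1\not\cong\det\E_2$ by twisting the defining sequence by $\det\E_1^\vee\otimes\det\E_2\in\Pic^0(S)$ and using Lemma \ref{h1}; for fixed $\E$ and varying $\tau$ one shows that two distinct $\tau_1,\tau_2$ giving the same $Z$ would produce, via the pencil $\langle s_1,s_2\rangle$, a rational curve in $S^{[\delta]}$ through the general point $\tau_1$, which is absurd. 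Neither argument is a formal consequence of anything you cite. (Even your ``immediate'' injectivity on fibres of $q$ silently uses $h^0(\E\otimes I_{W_s})=1$, i.e.\ $\Hom(\E,\det\E\otimes I_{W_s})\cong\CC$, which needs stability \emph{and} $h^1(\E)=0$ from Proposition \ref{prop:importante}(i).)

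Second, and more seriously, your passage from finiteness to generic injectivity does not work as stated. Knowing $Z$ does not determine $\tau$, and for a fixed $W=Z\sqcup\tau$ the Hartshorne--Serre construction gives a projective space $\PP\,\Ext^1(\det\E\otimes I_W,\O_S)$ of extensions, not a single sheaf; Proposition \ref{tecnica} controls the uniqueness of the $2$-plane $V\subset H^0(\E)$ attached to a pair $(C,\nu_*A)$, not the uniqueness of $([\E],\tau)$ attached to a length-$(k+\eps)$ scheme $Z$, so it cannot ``force $(\E,\tau)$ to be unique.'' The paper's mechanism is different: it first shows (using the injectivity of $G(2,H^0(\E))\to \G^1_{k+\eps}(\{L\})$, which is where Proposition \ref{tecnica} and $h^1(\E)=0$ actually enter) that $g$ never identifies two distinct projective fibres $\P_1,\P_2$; then, if a general $Z$ lay on $g(\P_1)\cap g(\P_2)$ with $g(\P_1)\neq g(\P_2)$, the fibre of the maximal rational quotient of $T$ through $Z$ would contain both images and hence have dimension at least $\chi-2\delta>\codim_{S^{[k+\eps]}}T$, contradicting \cite[Thm.~4.4]{AV}. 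That coisotropy/MRC-fibration bound is the decisive step and is absent from your proposal; without it (or a correct substitute) you only get that $g$ is generically finite, not birational onto its image.
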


\begin{proof}
 First of all, note that when $\chi \geq 2\delta+2$, condition \eqref{eq:boundA} is equivalent to $\chi\leq\delta+k+1$. Indeed, the latter inequality is a rewrite of \eqref{eq:boundB} with $l=1$; on the other hand, if $2\delta+2\leq\chi\leq\delta+k+\eps$, then $\delta\leq k+\eps-2$ and $\alpha=1$ in \eqref{eq:boundA}. Hence, \eqref{eq:nuovobound} ensures that the hypotheses of Proposition \ref{prop:importante} are satisfied and a general fibre of $q:\P^\circ\to X^\circ$ is isomorphic to $\mathbb{P}^{\chi-2\delta-1}$. We denote by $T$ the closure of the image of the morphism $g$ in \eqref{map1}. The proof proceeds by steps.

\vspace{0.3cm}

\noindent{\em STEP I: The morphism $g$ is injective when restricted to a general fibre of the projection $q$.}

Let $Z=g(([\E],\tau,[s]))\in  T$ with $([\E],\tau)\in X^\circ$ general, and denote by $W_s$ the zero scheme of $s$. The fibre of $g|_{q^{-1}([\E],\tau)}$ over $Z$ is contained in $\mathbb{P}(\mathrm{Hom}(\E,\det\E\otimes I_{W_s}))$. This projective space is a point because $\mathrm{Hom}(\E,\det\E\otimes I_{W_s})\simeq H^0(\E\otimes \E^\vee) \cong \CC$ by stability, exact sequence \eqref{ses2'} and Proposition \ref{prop:importante}(i), which yields $h^1(\E)=0$.

\vspace{0.3cm}

\noindent{\em STEP II: The map $g$ is generically injective when restricted to a general fibre of the projection $p_1:\P^\circ\to S^{[\delta]}$. If $\delta=0$, then $g$ is injective.}

Let $Z=g(([\E_1],\tau,[s_1]))\in  T$ with $([\E_1],\tau,[s_1])\in \P^\circ$ general; in particular, $\Supp(\tau)$ is disjoint from $\Supp(Z)$ and $\E_1$ satisfies $h^1(\E_1\otimes\L_0)=0$ for all $\L_0\in\Pic^0(S)$ by Lemma \ref{h1}. By contradiction, assume the existence of $([\E_2],\tau,[s_2])\in\P^\circ$ with $\E_2\not\simeq\E_1$ such that $g(([\E_2],\tau,[s_2]))=Z$. In particular, one has $W_{s_1}=W_{s_2}$. We remark that $\det\E_1\not\simeq\det\E_2$ because otherwise we would have $h^1(\det\E_i\otimes I_{W_{s_i}})>1$ and, by considering the long exact sequence in cohomology associated with \eqref{ses2'} for $\E_1$, we would get a contradiction with $h^1(\E_1)=0$. We tensor \eqref{ses2'} for $\E_1$ with $(\det\E_1)^\vee\otimes \det\E_2$, thus obtaining:
$$
0\to (\det\E_1)^\vee\otimes \det\E_2\longrightarrow \E_1\otimes (\det\E_1)^\vee\otimes \det\E_2\longrightarrow \det\E_2\otimes I_{W_s}\longrightarrow 0.
$$
This yields the contradiction $H^1(\E_1\otimes (\det\E_1)^\vee\otimes \det\E_2)\simeq H^1(\det\E_2\otimes I_{W_s})\neq 0$.

\vspace{0.3cm} 
\noindent{\em STEP III:  The map $g$ is generically injective when restricted to a general fibre of the projection $p_2:\P^\circ\to \M^\circ\subset \M$.}

Let $Z=g(([\E],\tau_1,[s_1]))\in  T$ for a general $([\E],\tau_1,[s_1])$. By contradiction, assume the existence of a subscheme $\tau_2\in S^{[\delta]}$ different from $\tau_1$ 
and a section $s_2\in H^0(\E\*I_{\tau_2})$ such that $g(([\E],\tau_2,[s_2]))=Z$. The evaluation map $ev:\langle s_1,s_2\rangle\otimes\O_S\to \E$ is injective and drops rank along an integral curve $\Gamma\in\{L\}$ of geometric genus $\leq p-k-\eps$ (indeed, $\Gamma$ is singular along $Z$). If $B$ is the cokernel of $ev$, then $B_1:=\ext^1(B,\O_S)=n_*A_1$, where $n:\widetilde{\Gamma}\to \Gamma$ is a partial normalization of $\Gamma$ with $p_a(\widetilde{\Gamma})=p-k-\eps-h$ for some $h\geq 0$ and $A_1$ a complete $g^1_{\delta-h}$ on $\widetilde{\Gamma}$. As a consequence, there is a subscheme $\tau_1'\subset\tau_1$ of length $\delta-h$ and a rational curve $R'$ in $S^{[\delta-h]}$ passing through $[\tau_1']$. Starting from $R'$, one easily constructs a rational curves in $S^{[\delta]}$ passing through $\tau_1$ in contradiction with the generality of $\tau_1$.
\vspace{0.3cm} 

\noindent{\em STEP IV: The map $g$ is generically finite.}

This follows from the previous steps and Proposition \ref{bundle_finito}, which can be applied because the complement of $\P^\circ$ in $\P$ has codimension at least two and $X$ is holomorphic symplectic.
\vspace{0.3cm} 

\noindent{\em STEP V: Let $\P_i=\mathbb{P}(H^0(E_i\*I_{\tau_i}))$ for $i=1,2$ with $([E_1],\tau_1),([E_2],\tau_2)\in X^\circ$ distinct points such that $h^1(\E_i)=0$ for $i=1,2$. Then $g$ does not identify $\P_1$ and $\P_2$.}

By contradiction, assume that $g(\P_1)=g(\P_2)$. As $g|_{\P_i}$ is injective for $i=1,2$ by Step I, it is easy to verify that, if $\ell_1\subset \P_1$ is a general line, then $\ell_2:=g^{-1} (g(\ell_1))\cap \P_2$ is a line, too. By generality, $\ell_1=\mathbb{P}V_1$ corresponds to a pair $(C_1,\nu_*A_1)$, where $C_1$ is a $\delta$-nodal curve and $A_1$ is a $g^1_{k+\eps}$ on its normalization. Having set $\ell_2=\mathbb{P}V_2$, the evaluation map $V_2\otimes \O_S\to\E_2$ drops rank along a curve $C_2$ singular along $\tau_2$. Then, $C_1=C_2$ as both coincide with the image in $S$ of the incidence variety
$$
I:=\{(Z,P)\in g(\ell_1)\times S\subset S^{k+\eps}\times S\,|\, P\in \Supp(Z)\}.
$$
As a consequence, $\tau_2=\tau_1$ and $\E_2\simeq\E_{C,\nu_*A_1}\simeq \E_1$.

\vspace{0.3cm} 

\noindent{\em STEP VI: The morphism $g$ is generically injective.}

By contradiction, assume that  for a general $Z\in T$ there exist at least two distinct points $([\E_1],\tau_1), ([\E_2],\tau_2)\in X^\circ$ such that $Z\in g(\P_1)\cap g(\P_2)$, where $\P_i=\mathbb{P}(H^0(\E_i\*I_{\tau_i}))$; we may assume that $h^1(\E_1)=h^1(\E_2)=0$. The previous step then implies that $g(\P_1) \neq g(\P_2)$. We denote by $\pi:\widetilde{T}\dashrightarrow B$ the maximal rational quotient of the desingularization $\widetilde{T}$ of $T$, and by $\widetilde{Z}$ the inverse image of $Z$ in $\widetilde{T}$. Since a general fibre of $\pi$ is irreducible and $\pi^{-1}(\pi(\widetilde{Z}))$ contains the strict transforms of both $g(\P_1)$ and $g(\P_2)$, then $\dim \pi^{-1}(\pi(\widetilde{Z}))\geq\chi-2\delta>\codim_{S^{[k+\eps]}} T$; this contradicts \cite[Thm. 4.4]{AV} (when $\eps=1$, in order to apply the mentioned result, one needs to pass to the fibers of the Albanese map and use that $g(\P_1) \cup g(\P_2)$ is contained in such a fiber).
\vspace{0.3cm} 

When $\eps=0$, Step VI concludes the proof. If $\eps=1$, consider the composition of $g$ with the Albanese map $\Sigma_k:S^{[k+1]}\to S$. Since $\Sigma_k\circ g$ is constant when restricted to any fibre of $q$, it induces a morphism $F:X^\circ\to S$ that factors, by the universal property of the Albanese variety, through a map $f:\mathrm{Alb}(X)\to S$. One can easily show that both $F$ and $f$ are surjective. The inverse image $g^{-1}(S^{[k]}_\eps)\subset \P^\circ$ is generically a $\mathbb{P}^{\chi-2\delta-1}$-bundle on the inverse image  $(\mathrm{alb}_X|_{X^\circ})^{-1}(\ker f)$, where $\mathrm{alb}_X$ is the Albanese map of $X$. Since $\mathrm{Alb}(X)$ is the product of copies of $S$ and $S^\vee$, the same holds for any connected component of $\ker f$. Therefore, any component of $(\mathrm{alb}_X|_{X^\circ})^{-1}(\ker f)$ is holomorphic symplectic of dimension equal to $\dim X-2=2(k+1+2\delta-\chi)$, and the statement follows. 
\end{proof}

We now give a first application to Conjecture \ref{voisin}.

\begin{cor}\label{cor1}
With the same hypotheses and notation as in Theorem \ref{thm:contrazioni}, set $r:=\chi-2\delta-1$. Then, $\mathbb{S}_r(S^{[k]}_\eps)$ has a ($2k-r$)-dimensional component which is an algebraic coisotropic subvariety of $S^{[k]}_\eps$ covered by curves of class $R_{p,\delta,k}$. 
\end{cor}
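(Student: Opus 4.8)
The plan is to take $T$ to be the irreducible component of $\mathrm{Locus}(\mathbb{R}^+R_{p,\delta,k})$ produced by Theorem \ref{thm:contrazioni} and to verify the three asserted properties in turn: that $\dim T=2k-r$, that $T$ is algebraically coisotropic, and that $T$ is a component of $\mathbb{S}_r(S^{[k]}_\eps)$. First I would record the numerology. By Theorem \ref{thm:contrazioni}, $T$ is birational to a $\mathbb{P}^{r}$-bundle over a holomorphic symplectic manifold $W$ with $r=\chi-2\delta-1$ and $\dim W=2(k+1+2\delta-\chi)=2k-2r$; hence $\dim T=r+\dim W=2k-r$ and $\codim_{S^{[k]}_\eps}T=r$. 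Moreover $T$ is covered by curves of class $R_{p,\delta,k}$ by construction, these being the lines in the $\mathbb{P}^r$-fibres.

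For algebraic coisotropy, let $\pi:T\dashrightarrow W$ denote the rational projection. Its general fibre is a $\mathbb{P}^r$, hence rationally connected and carrying no nonzero holomorphic $2$-form, so the restriction $\sigma|_T$ of the symplectic form of $S^{[k]}_\eps$ vanishes on the fibres of $\pi$. As in the proof of Proposition \ref{bundle_finito}, this forces $\sigma|_T$ to descend to a $2$-form $\omega$ on $W$, i.e. $\sigma|_T=\pi^*\omega$. Since $\dim W=2k-2r=2k-2\codim_{S^{[k]}_\eps}T$, this is precisely the statement that $T$ is algebraically coisotropic with base $B=W$.

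Next I would show $T\subset\mathbb{S}_r(S^{[k]}_\eps)$. For a general $x\in T$ the fibre $\pi^{-1}(\pi(x))\cong\mathbb{P}^r$ is rationally connected, so all of its points are rationally equivalent to $x$ as $0$-cycles on $S^{[k]}_\eps$; thus the rational-equivalence orbit of $x$ has dimension at least $r$, that is $x\in\mathbb{S}_r$. As this holds for general $x$, the whole of $T$ lies in $\mathbb{S}_r$. (For $\eps=1$ one passes, as in Step VI of the proof of Theorem \ref{thm:contrazioni}, to a fibre of the Albanese map so that the relevant loci are contained in $S^{[k]}_\eps$.)

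Finally, to see that $T$ is genuinely a component of $\mathbb{S}_r$ of dimension exactly $2k-r$, I would invoke the dimension bound for the maximal rational quotient of coisotropic subvarieties \cite[Thm. 4.4]{AV} already used in Proposition \ref{bundle_finito}: the orbits responsible for membership in $\mathbb{S}_r$ are swept out by rational curves, so the general orbit of a point of $T$ is contained in a fibre of the associated maximal rational quotient, whose dimension equals the codimension of the uniruled locus. This pins the general orbit dimension at exactly $r$ and prevents $T$ from being contained in a strictly larger irreducible subvariety of $\mathbb{S}_r$, so that $T$ is a component of the predicted dimension $2k-r$. I expect this last step — controlling the orbit dimension from above via \cite[Thm. 4.4]{AV} to guarantee maximality — to be the main obstacle, the remaining properties being immediate consequences of Theorem \ref{thm:contrazioni}.
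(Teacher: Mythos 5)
Most of your proposal is fine and matches the paper: the dimension count $\dim T=r+\dim W=2k-r$, the coisotropy of $T$ (the restriction of $\sigma$ kills the rationally connected $\mathbb{P}^r$-fibres and descends to $W$), and the inclusion $T\subset\mathbb{S}_r(S^{[k]}_\eps)$ via rational equivalence of points in a $\mathbb{P}^r$-fibre are all exactly what the paper uses (the first two are built into Theorem \ref{thm:contrazioni} and its proof).

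The gap is in your last step, where you try to deduce that $T$ is a \emph{component} of $\mathbb{S}_r$ from \cite[Thm.~4.4]{AV}. That theorem controls the fibre dimension of the maximal rational quotient of a \emph{uniruled} subvariety; applied to $T$ it only bounds the orbit dimension of points of $T$ itself, which is not the issue. To show $T$ is a component you must exclude an irreducible closed $T'$ with $T\subsetneq T'\subset \mathbb{S}_r(S^{[k]}_\eps)$ and $\dim T'>2k-r$, and nothing forces such a $T'$ to be uniruled: your premise that ``the orbits responsible for membership in $\mathbb{S}_r$ are swept out by rational curves'' is false in general (orbits under rational equivalence contain constant-cycle subvarieties that need not be rational — already on a $K3$ surface the orbit of the Beauville--Voisin class contains non-rational constant-cycle curves). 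The paper closes this step with \cite[Thm.~1.3]{voi}, which states that \emph{any} closed subvariety of an IHS manifold contained in $\mathbb{S}_r$ has codimension at least $r$ (proved by Mumford's technique with holomorphic forms, not by rational curves); this immediately gives $\dim T'\leq 2k-r=\dim T$, hence $T'=T$. You need that input, or an equivalent statement about constant-cycle subvarieties, to finish.
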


\begin{proof}
This follows directly from Theorem \ref{thm:contrazioni} and \cite[Thm. 1.3]{voi}, which states that a closed subvariety of an IHS manifold $X$ contained in $\mathbb{S}_r(X)$ has codimension at least $r$.
\end{proof}

Starting from the closure of $\im g\subset S^{[k+\eps]}$, with $g$ as in \ref{map1}, and then applying the natural rational map $S^{[k+\eps]} \x S^{[l-k]} \dasharrow S^{[l+\eps]}$, one obtains subvarieties of $S_{\eps}^{[l]}$ for any $l\geq k$. We use this observation in order to construct subvarieties of $S_{\eps}^{[k]}$, with $k$ fixed, of codimension $r$ for several values of $r$:

\begin{thm}\label{thm:contrazioni2}
Let $(S,L)$ be a very general primitively polarized $K3$ or abelian surface of genus $p \geq 2$ and fix an integer $k \geq 2$. 

Then, for any integer $r$ satisfying
\begin{equation}
  \label{eq:terminator}
  1 \leq r \leq \min\left\{2k-5-\frac{p-5\eps}{2}, \frac{p-5\eps}{2}+1\right\}, 
\end{equation}
and
\begin{equation}
  \label{eq:terminator2}
  \mbox{$p \geq 9$ if $(\eps,r)=(1,1)$}; \; \; \mbox{$p \geq 11$ if $(\eps,r)=(1,2)$,}  
\end{equation}
there is an algebraically coisotropic subvariety of codimension $r$ in $S^{[k]}_{\eps}$ that is a component of $\mathbb{S}_r(S^{[k]}_\eps)$ and is birational to a $\PP^r$-bundle over a holomorphic symplectic manifold. 

More precisely, for any integer $\delta$ satisfying
\begin{equation}
  \label{eq:condelta}
  \max\left\{0,\frac{p-5\eps+2-r-k}{3}\right\} \leq \delta \leq \frac{p-5\eps+2-2r}{4}, \; \; \mbox{and $\delta >0$ if $r \leq 2$ and $\eps=1$},
\end{equation}
there is such a subvariety $\Sigma_{r,\delta}$ whose lines have class $L-[2(p-2\delta-2\eps)-r+1]\mathfrak{r}_k$. 
\end{thm}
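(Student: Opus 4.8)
The plan is to reduce the statement to Theorem~\ref{thm:contrazioni} by means of the ``addition'' maps $S^{[k_0+\eps]}\x S^{[k-k_0]}\dashrightarrow S^{[k+\eps]}$ for a suitable auxiliary integer $k_0\le k$. Given $r$ and $\delta$ as in the statement, I would set
\[
k_0:=p-3\delta+2-5\eps-r,
\]
which is precisely the parameter for which Theorem~\ref{thm:contrazioni} produces a $\PP^{\chi-2\delta-1}$-bundle with $\chi-2\delta-1=r$: indeed $\chi=p-\delta-k_0+3-5\eps=r+2\delta+1$. The first task is to verify that $(p,\delta,k_0)$ satisfies the hypotheses of Theorem~\ref{thm:contrazioni} and that $2\le k_0\le k$; I return to this below, as it is the delicate point.

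Granting this, Theorem~\ref{thm:contrazioni} yields a subvariety $T_0\subset S^{[k_0]}_\eps$, birational to a $\PP^{r}$-bundle over a holomorphic symplectic manifold $W_0$ of dimension $2(k_0-r)$, whose $\PP^{r}$-fibres have lines of class $R_{p,\delta,k_0}$. For $\eps=0$ I would push $T_0$ forward by adding a general disjoint-support subscheme of length $k-k_0$: the map $\phi:S^{[k_0]}\x S^{[k-k_0]}\dashrightarrow S^{[k]}$ is birational onto its image when restricted to a family whose members have generically disjoint support, so $\Sigma_{r,\delta}:=\overline{\phi(T_0\x S^{[k-k_0]})}$ has dimension $(2k_0-r)+2(k-k_0)=2k-r$, i.e. codimension $r$, and is birational to a $\PP^{r}$-bundle over $W_0\x S^{[k-k_0]}$, which is holomorphic symplectic of dimension $2k-2r$. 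For $\eps=1$ I would instead start from the full image $\overline{\im g}\subset S^{[k_0+1]}$ (before intersecting with the Albanese fibre), which is birationally a $\PP^{r}$-bundle over $\M\x S^{[\delta]}$ and has dimension $2k_0-r+2$; after forming $\overline{\phi(\overline{\im g}\x S^{[k-k_0]})}\subset S^{[k+1]}$ and intersecting with the fibre $K^k(S)=S^{[k]}_1$ of the summation map, one cuts the dimension down by exactly $2$, since the image surjects onto the Albanese variety $S$ as in the proof of Theorem~\ref{thm:contrazioni}. In both cases $\Sigma_{r,\delta}$ is $(2k-r)$-dimensional, birationally a $\PP^{r}$-bundle over a holomorphic symplectic base of dimension $2k-2r$, and algebraically coisotropic, because the $\PP^{r}$-fibres are rationally connected, so $\sigma|_{\Sigma_{r,\delta}}$ vanishes on them and descends to the base.

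For the class of the lines I would argue that adding a fixed subscheme supported away from $C$ leaves the ramification of the moving pencil, and hence the intersection number with the big diagonal $\Delta_k$, unchanged; by Lemma~\ref{juveladra} this intersection equals twice the coefficient of $\mathfrak{r}_k$. Thus the lines of $\Sigma_{r,\delta}$ keep the class $R_{p,\delta,k_0}=L-(p-\delta+k_0-1+\eps)\mathfrak{r}_k$, and substituting $k_0=p-3\delta+2-5\eps-r$ turns the coefficient into $2(p-2\delta-2\eps)-r+1$, as claimed. To see that $\Sigma_{r,\delta}$ is a component of $\mathbb{S}_r(S^{[k]}_\eps)$, I would note that a general point lies on a $\PP^{r}$-fibre, all of whose points are rationally equivalent as $0$-cycles by rational connectedness; hence its orbit under rational equivalence has dimension at least $r$, so $\Sigma_{r,\delta}\subseteq\mathbb{S}_r$. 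Since by \cite[Thm.~1.3]{voi} every subvariety contained in $\mathbb{S}_r$ has codimension at least $r$ and $\Sigma_{r,\delta}$ has codimension exactly $r$, it is a component of $\mathbb{S}_r$ of the expected dimension, exactly as in Corollary~\ref{cor1}.

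The main obstacle is the numerical bookkeeping. Setting $\chi=r+2\delta+1$, the inequalities $2\delta+2\le\chi$, $4\eps\le\chi$ and $\chi\le\delta+k_0+1$ required by Theorem~\ref{thm:contrazioni} amount respectively to $r\ge1$, to the condition $\delta\ge1$ in the exceptional cases $(\eps,r)\in\{(1,1),(1,2)\}$, and to the upper bound $\delta\le(p-5\eps+2-2r)/4$ of \eqref{eq:condelta}; the inequality $k_0\le k$ is the lower bound $\delta\ge(p-5\eps+2-r-k)/3$, while $k_0\ge 2$ and $0\le\delta\le p-2\eps$ impose only milder restrictions. The heart of the matter is then to show that an integer $\delta$ satisfying all of these (and $\delta\ge1$ when needed) exists precisely when $r$ lies in the range \eqref{eq:terminator} and \eqref{eq:terminator2} holds: the nonnegativity of the upper endpoint gives $r\le(p-5\eps)/2+1$, a floor analysis of the interval length $(4k-(p-5\eps+2)-2r)/12$ produces the bound $r\le 2k-5-(p-5\eps)/2$, and \eqref{eq:terminator2} is exactly what forces $\delta=1$ into the admissible interval in the two exceptional cases. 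This integrality analysis, rather than any geometric difficulty, is the technical core of the argument.
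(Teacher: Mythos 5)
Your proposal is correct and follows essentially the same route as the paper: both reduce to Theorem \ref{thm:contrazioni} for an auxiliary integer $k_0=k'$ with $\chi-2\delta-1=r$, push the resulting $\PP^r$-bundle forward under the addition map $S^{[k_0+\eps]}\x S^{[k-k_0]}\dashrightarrow S^{[k+\eps]}$ (intersecting with an Albanese fibre when $\eps=1$), observe that the coefficients of the line class and the codimension are preserved, invoke \cite[Thm. 1.3]{voi} as in Corollary \ref{cor1}, and finish with the same translation of the constraints of Theorem \ref{thm:contrazioni} into \eqref{eq:condelta} together with the same nonemptiness analysis of the resulting interval, with \eqref{eq:terminator2} forcing $\delta\geq 1$ in the two exceptional Kummer cases.
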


\begin{proof}
We note that if $\delta$ and $k$ are integers satisfying the conditions in Theorem \ref{thm:contrazioni}, then, for any integer $l \geq k$, the image of $\im g \x S^{[l-k]} \subset  S^{[k+\eps]} \x S^{[l-k]} $ under the natural birational map
$S^{[k+\eps]} \x S^{[l-k]} \dasharrow S^{[l+\eps]}$ has the same codimension $r$ as $\im g \subset  S^{[k+\eps]}$. The intersection of this image with any fibre of the Albanese map is birational to a $\PP^{r}$-bundle over a holomorphic symplectic manifold and the coefficients in the class of the lines with respect to the canonical decomposition \eqref{eq:N1} remain unchanged. Hence, Theorem \ref{thm:contrazioni} yields that for any pair of integers $(\delta,k')$ such that 
\begin{equation}
  \label{eq:v1}
2 \leq k' \leq k
\end{equation}
and
\begin{equation}
  \label{eq:condi}
\max\{2\delta+2,4\eps\} \leq p-5\eps -\delta-k'+3\leq \delta+k'+1, \; \; \delta \geq 0
\end{equation}
there is a subscheme of codimension $r:=p-5\eps-3\delta-k'+2$ satisfying the desired conditions. Rewriting \eqref{eq:v1} and \eqref{eq:condi} in terms of $r$ instead of $k'$ yields, respectively,
\begin{equation}
  \label{eq:condelta'}
\frac{p-5\eps+2-r-k}{3} \leq \delta \leq \frac{p-5\eps+r}{3}
\end{equation}
and
\begin{equation}
  \label{eq:condi'}
  \frac{1}{2} \max\{0,4\eps-r-1\} \leq \delta \leq \frac{p-5\eps+2-2r}{4},
\end{equation}
One easily verifies that these two conditions are equivalent to \eqref{eq:condelta}. 
It only remains to prove that for each $r$ satisfying \eqref{eq:terminator} and \eqref{eq:terminator2}, the conditions \eqref{eq:condelta} are nonempty. 

The inequality 
$r \leq 2k-5-\frac{p-5\eps}{2}$ ensures that 
\[ \frac{p-5\eps+2-r-k}{3} \leq \frac{p-5\eps+2-2r}{4} -\frac{3}{4},\]
 thus guaranteeing that the interval $\Big[\frac{p-5\eps+2-r-k}{3}, \frac{p-5\eps+2-2r}{4}\Big]$ contains an integer. 
The condition $r \leq\frac{p-5\eps}{2}+1$ is equivalent to $p-5\eps+2-2r \geq 0$ and thus guarantees that the above interval contains a nonnegative integer.
 We are therefore done, except for the cases $\eps=1$ and $r \leq 2$, where the requirement is that $\delta >0$ by \eqref{eq:condelta}, that is, that
$p-5\eps+2-2r \geq 4$, which is precisely \eqref{eq:terminator2}. 
\end{proof}

We now perform a different construction in order to exhibit uniruled subvarieties of $S_{\eps}^{[k]}$ of any allowed codimension, except codimension $k$ for $\eps=1$. This provides very strong evidence for Conjecture \ref{voisin}. 

\begin{thm} \label{thm:nuovofigo}
    Let $(S,L)$ be a general primitively polarized $K3$ or abelian surface of genus $p \geq 2$ and fix an integer $k \geq 2$. Then for any integer $r$ such that $1 \leq r \leq k-\eps$ there is  an algebraically coisotropic subvariety of codimension $r$ in $S^{[k]}_{\eps}$ that is a component of $\mathbb{S}_r(S^{[k]}_\eps)$ and is birational to a $\PP^r$-bundle; furthermore, the maximal rational quotient of it desingularization has dimension  $2(k-r)$.

More precisely, for any integer $k'$ such that $r+\eps \leq k' \leq \min\{k,p+r-\eps\}$,  
there is such a subvariety $W_{r,k'}$ whose lines have class $L-[2(k'+\eps)-r-1]\mathfrak{r}_k$. 
\end{thm}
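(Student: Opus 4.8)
The plan is to realize $W_{r,k'}$ as the locus swept out in $S^{[k]}_\eps$ by the $\PP^r$-fibres of the symmetric products of normalizations of curves in a Severi variety, and then to read off the four assertions from the geometry of this sweep. Set $\delta:=p+r-k'-\eps$ and $g:=p-\delta=k'+\eps-r$; the hypotheses $r+\eps\leq k'\leq\min\{k,p+r-\eps\}$ translate exactly into $0\leq\delta\leq p-2\eps$ and $g\geq 0$, and since $\deg=k'+\eps=g+r\geq g$ a general line bundle of degree $k'+\eps$ on a curve of genus $g$ has $h^0=r+1$. One checks that \eqref{eq:boundA} holds (with $\alpha=0$) and that $g<2(k'-1+\eps)$, so by Theorem \ref{thm:exist} (applied with $k'$ in place of $k$) the normalization $\widetilde{C}$ of a general member $C$ of the component $Y_{\delta,k'+\eps}$ of $V_{\{L\},\delta}$ carries a $g^r_{k'+\eps}$, and the relative symmetric product $\Sym^{k'+\eps}(\widetilde{\mathcal{C}}/V)$ is generically a $\PP^r$-bundle over the relative Picard $\Pic^{k'+\eps}(\widetilde{\mathcal{C}}/V)$. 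Pushing divisors forward to $S$ defines a rational map $\phi\colon\Sym^{k'+\eps}(\widetilde{\mathcal{C}}/V)\dashrightarrow S^{[k'+\eps]}$, $(C,D)\mapsto\nu_*D$. When $\eps=1$ I first note that, after composing with the summation map $S^{[k'+1]}\to S$, the map $\phi$ is constant along each $\PP^r$-fibre (linearly equivalent divisors on $\widetilde{C}$ have the same image in $\Alb(\widetilde{C})\twoheadrightarrow S$), so each fibre already lands in a single $K^{k'}(S)$, and I restrict to this Albanese fibre. Finally I transport the resulting locus into $S^{[k]}_\eps$ by the birational map $S^{[k'+\eps]}\times S^{[k-k']}\dashrightarrow S^{[k+\eps]}$, adding $k-k'$ general points, exactly as in the proof of Theorem \ref{thm:contrazioni2}.

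The decisive computation is that $\phi$ is generically finite. The fibre of $\phi$ over a general reduced $Z$ is $\{C\in V:\Supp(Z)\subset C\}$, whose Zariski tangent space at $C$ is $H^0(N'_{C/S}(-Z))\cong H^0(\omega_{\widetilde{C}}(-D))$, where I use $N'_{C/S}\cong\nu_*N_\varphi$ and $N_\varphi\cong\omega_{\widetilde{C}}$ from the normal sequence \eqref{eq:normalsequence}. For a general $D$ of degree $g+r\geq g$ one has $h^1(D)=0$, hence $h^0(\omega_{\widetilde{C}}(-D))=h^1(D)=0$ by Serre duality, so the fibres of $\phi$ are finite and the image $T'$ has codimension $r$ in $S^{[k'+\eps]}$ (and codimension $r$ in $K^{k'}(S)$ after the Albanese restriction when $\eps=1$). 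Since $\phi$ restricted to a single $\Sym^{k'+\eps}(\widetilde{C})$ is generically injective (distinct reduced divisors away from the nodes push forward to distinct subschemes of $S$), each $\PP^r$-fibre maps birationally into $T'$; a line in such a fibre is a $g^1_{k'+\eps}$ with simple ramification and non-neutral nodes, so by Lemma \ref{juveladra} its image has class $R_{p,\delta,k'}=L-\big(2(k'+\eps)-r-1\big)\mathfrak{r}_{k'}$, which becomes $L-[2(k'+\eps)-r-1]\mathfrak{r}_k$ after the push-up, giving (iii).

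To obtain the fibration and the coisotropy I would invoke \cite[Thm. 4.4]{AV}: as $W_{r,k'}$ is covered by the rational curves of class $R_{p,\delta,k'}$, the general fibre of its maximal rational quotient has dimension exactly the codimension $r$, so $W_{r,k'}$ is coisotropic and its maximal rational quotient has dimension $(2k-r)-r=2(k-r)$, which is (iv). Each such fibre is irreducible of dimension $r$ and contains the birational image of a $\PP^r$, hence coincides with it and is birational to $\PP^r$; therefore $W_{r,k'}$ is birational to a $\PP^r$-bundle over the $2(k-r)$-dimensional base. As $\PP^r$ carries no holomorphic $2$-forms, $\sigma|_{W_{r,k'}}$ vanishes along these fibres and descends to a $2$-form on the base, which gives (ii). Finally, points lying on a common $\PP^r$ are rationally equivalent in $S^{[k]}_\eps$, so $W_{r,k'}\subset\mathbb{S}_r(S^{[k]}_\eps)$; since its codimension equals $r$, the bound of \cite[Thm. 1.3]{voi} forces it to be an irreducible component, yielding (i) and completing $\dim W_{r,k'}=2k-r$.

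The hard part will be organizing the $\PP^r$-bundle structure together with the application of \cite[Thm. 4.4]{AV}: one must verify that the covering family of rational curves of class $R_{p,\delta,k'}$ has the expected dimension so that the Amerik--Verbitsky result applies, and that the maximal rational quotient fibres genuinely coincide with the images of the $\PP^r$'s rather than merely containing them. The abelian case carries the extra burden of the Albanese bookkeeping---controlling the fibre over $0\in S$ and checking that summation is dominant on the relative Picard (which holds since $\widetilde{C}$ generates $\Alb(S)=S$)---and it is exactly this two-dimensional loss, together with the constraints $k'\leq k$ and $k'\geq r+\eps$, that removes the value $r=k$ when $\eps=1$.
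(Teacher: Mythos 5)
Your construction is the same as the paper's: with $g=k'+\eps-r$ and $\delta=p-g$ you form the relative symmetric product $\Sym^{k'+\eps}(\widetilde{\mathcal{C}})$ over the Severi variety, observe that it is generically a $\PP^r$-bundle over the relative Picard variety (of dimension $2g=2(k-r)-2(k-k')$), push it into $S^{[k'+\eps]}$, add $k-k'$ general points via $S^{[k'+\eps]}\times S^{[k-k']}\dashrightarrow S^{[k+\eps]}$, restrict to the Albanese fibre when $\eps=1$, and compute the class of the lines via Lemma \ref{juveladra}, exactly as in the paper. The one step you treat differently is the crucial one: that the pushforward map to $\Sym^{k'+\eps}(S)$ does not collapse the bundle structure. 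The paper proves this map is \emph{generically injective} by a synthetic count: imposing passage through successive general points of a general $C_0\in V$ drops $\dim V$ by one each time, so $g+1$ general points of $C_0$ lie on no other curve of $V$, and $k'+\eps=g+r\geq g+1$. Your infinitesimal argument -- the tangent space to the fibre over a general reduced $Z=\nu_*D$ is $H^0(\N'_{C/S}\otimes \I_{Z/C})\cong H^0(\omega_{\widetilde{C}}(-D))\cong H^1(D)^\vee=0$ -- is correct, and is a nice alternative, but it only yields \emph{generic finiteness} (the fibre is a reduced isolated point at $(C_0,D)$, yet could contain other points). That suffices for the codimension count, for coisotropy, for membership in $\mathbb{S}_r(S^{[k]}_\eps)$ via \cite[Thm.~1.3]{voi}, and, combined with \cite[Thm.~4.4]{AV}, for the dimension $2(k-r)$ of the maximal rational quotient.

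The residual gap is in assertion (ii). If the pushforward map were generically $d$-to-one with $d>1$, your appeal to \cite[Thm.~4.4]{AV} would still identify each general fibre of the maximal rational quotient with the common image of $d$ projective spaces, hence show that $W_{r,k'}$ carries a fibration whose general fibre is \emph{birational to} $\PP^r$; but that is not literally the statement that $W_{r,k'}$ is birational to a $\PP^r$-bundle (the total space need not be birational to the bundle upstairs). To close this, either run the paper's point-counting argument to get genuine generic injectivity, or supplement your tangent-space computation with a global argument excluding a second curve of $V$ through $g+1$ general points of $C_0$. Your flagged concern about the hypotheses of \cite[Thm.~4.4]{AV} is legitimate but is handled in the paper by the fact that the covering family of rational curves of class $R_{p,\delta,k'}$ has dimension exactly $2k-2$ (Proposition \ref{prop:andreas010715}); note also that the paper sidesteps this by first establishing the $\PP^r$-bundle structure directly and using the Amerik--Verbitsky bound only to pin down the dimension of the rational quotient.
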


\begin{proof}
  For any $r$ and $k$ as in the statement, set $g:=k'-r+\eps$ and $\delta:=p-g$. Note that $2\eps \leq g \leq p$. Since $\rho(g,1,k+\eps)\geq 0$, the Brill-Noether locus  $\{L\}^1_{\delta,k+\eps}$ coincides with the $g$-dimensional Severi variety $V_{\{L\},\delta}$ of genus $g$ nodal curves. For any component $V$ of 
$V_{\{L\},\delta}$, we denote by $\mathcal{C} \to V$ the universal family and by $\widetilde{\mathcal{C}} \to V$ the simultaneous desingularization of all curves in $\mathcal{C}$ (the latter exists, as $V$ is smooth). Let $\Sym^{k'+\eps}(\widetilde{\mathcal{C}}) \to V$ be the relative $(k'+\eps)$-symmetric product, with fibre over a point $[C] \in V$ equal to $\Sym^{k'+\eps}(\widetilde{C})$, where $\widetilde{C}$ is the normalization of $C$. 
By surjectivity of the Abel map $\Sym^{k'+\eps}(\widetilde{C})\to \Pic^{k'+\eps}(\widetilde{C})$, the line bundle $\O_{\widetilde{C}}(x_1+\cdots+ x_{k'+\eps})$  is nonspecial if $x_1, \ldots, x_{k'+\eps}\in\widetilde{C}$ are general . Therefore, 
$\dim |\O_{\widetilde{C}} (x_1+\cdots+ x_{k'+\eps})| = k'+\eps-g =r\geq 1$ and $\Sym^{k'+\eps}(\widetilde{C})$ is  generically a $\PP^{r}$-bundle over a dense, open subset of $\Pic^{k'+\eps}(\widetilde{C})$, which has dimension $g$. It follows that $\Sym^{k'+\eps}(\widetilde{\mathcal{C}})$ is generically a $\PP^{r}$-bundle over a scheme of dimension $g+\dim V=2g$. 

Consider the natural composed morphism 
\[
f: \Sym^{k'+\eps}(\widetilde{\mathcal{C}}) \to \Sym^{k'+\eps}(\mathcal{C}) \to  \Sym^{k'+\eps}(S).
\]
We first prove that $f$ is generically injective. Clearly, for each curve $[C] \in V$ with normalization $\widetilde{C}$, the restriction of $f$ to $\Sym^{k'+\eps}(\widetilde{C})$ is generically injective. Hence, it is enough to show that, if $[Z] \in \mathrm{Im}f$ is general, then the scheme $Z$ is contained in a unique curve parametrized by $V$. A general $[Z] \in \mathrm{Im}f$ consists of 
$k'+\eps$ general points on a general curve $C_0$ parametrized by $V$. For a general point $x_1 \in C_0$, the set $\{ [C] \in  V \; | \; x_1 \in C\}$ has codimension one in $V$, as $C_0$ is not a common component of all curves parametrized by $V$. Proceeding inductively, assume that for a fixed $1 \leq j \leq g-1$ we have chosen $j$ distinct points $x_1, \ldots, x_j \in C_0$ such that the set $\{ [C] \in  V \; | \; x_1\ldots, x_j  \in C\}$ has codimension $j$ in $V$. Again, as $C_0$ is not a component of any curve in this set different from $C_0$, for a general $x_{j+1} \in C_0$
the set $\{ [C] \in  V \; | \; x_1\ldots, x_{j+1}  \in C\}$ has codimension $j+1$ in $V$. It follows that $\dim \{ [C] \in  V \; | \; x_1\ldots, x_g  \in C\}=0$ for general points $x_1, \ldots, x_g \in C_0$. Hence, for general points $x_1, \ldots, x_{g+1} \in C_0$, we have $\{ [C] \in  V \; | \; x_1\ldots, x_{g+1}  \in C\}=\{C_0\},$
and the generic injectivity of $f$ follows since $k'+\eps =g+r
 \geq g+1$.

The image of $f$ does not lie in the singular locus of $\Sym^{k'+\eps}(S)$. Hence, its inverse image under the Hilbert-Chow morphism is a ($k'+\eps+g$)-dimensional subvariety
of $S^{[k'+\eps]}$ that is birational to a $\PP^{r}$-bundle. 
As above,  the natural rational map  $S^{[k'+\eps]} \x S^{[k-k']} \dashrightarrow  S^{[k+\eps]}$ maps $\im f$ to a codimension $r$ subvariety of $S^{[k+\eps]}$. 
Since the Albanese map is constant on each rational subvariety of $S^{[k+\eps]}$, we obtain a subvariety
$W_{r,k'} \subset S^{[k]}_{\eps}$ of codimension 
$r$
that is birational to a $\PP^r$-bundle and the maximal rational quotient of the desingularization of $W_{r,k'}$ has dimension $2(k-r)$ by \cite[Thm. 4.4]{AV}.  
The coefficients in the class of the lines in the $\PP^r$-fibres of $W_{r,k'}$ with respect to the canonical decomposition \eqref{eq:N1} are the same as the ones of $R_{p,\delta,k'}=R_{p,p+r-k'-\eps,k'}=L-[2(k'+\eps)-r-1]\mathfrak{r}_{k'}$. This concludes the proof.
\end{proof}

We conclude with an interesting example, where Theorem \ref{thm:contrazioni} provides an immersion $\mathbb{P}^k\hookrightarrow S^{[k]}_\eps$.

\begin{example} \label{ex:romarosica}
By Remark \ref{rem:cile1} , when $\delta=0$ the class $R_{p,0,k}$ has the minimal possible self-intersection (i.e., $q(R_{p,0,k})=-(k+3-2\eps)/2$) if and only if $\alpha=1$ and $p= 2(k-1)+5\eps$. 
We assume these numerical conditions are satisfied and show, by explicit construction, that $R_{p,0,k}$ is the class of a line moving in a $\mathbb{P}^k \subset S_{\eps}^{[k]}$. Note that in this case $\dim\M=2\eps$ and condition \eqref{eq:nuovobound} is satisfied. With the same notation introduced just before Theorem \ref{thm:contrazioni}, $\P^\circ=\P$ and any component of $g^{-1}(S^{[k]}_\eps)$ is isomorphic to $\PP H^0(\E)$ for some vector bundle $[\E]\in\M$. We consider the restricted morphism
\[ 
\xymatrix{ 
\overline{g}:=g|_{\mathbb{P} H^0(\E)}:\mathbb{P}^k=\mathbb{P} H^0(\E)  \ar[r] & S_{\eps}^{[k]}\sub S^{[k+\eps]},
}
\]
which is injective by Theorem \ref{thm:contrazioni}  and, more precisely, an embedding by the result below.
\end{example}

\begin{prop}\label{theend}
Let $(S,L)$ be a general primitively polarized symplectic surface of genus $p=2(k-1)+5\eps$ for an integer $k\geq 2$. Then the map $\overline{g}$ is an embedding. In particular, the class $R_{p,0,k}$ is the class of a line in a $\mathbb{P}^k\subset S^{[k]}_\eps$.
\end{prop}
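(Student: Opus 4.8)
The plan is to deduce the statement by upgrading the injectivity of $\overline{g}$, already established in Theorem \ref{thm:contrazioni}, to a closed embedding. Since $\PP^k$ is proper and $S^{[k+\eps]}$ is separated, $\overline{g}$ is automatically proper; and a proper injective morphism that is moreover unramified (i.e. has everywhere injective differential) is a closed immersion. Thus it suffices to prove that $d\overline{g}_{[s]}$ is injective at every point $[s]\in \PP H^0(\E)$. Recall from the discussion preceding Theorem \ref{thm:contrazioni} that $\overline{g}$ sends $[s]$ to the zero scheme $W_s=Z(s)$, which is $0$-dimensional of length $k+\eps=c_2(\E)$ because $\E$ is $\mu_L$-stable, and that $h^1(\E)=0$ by Proposition \ref{prop:importante}(i), so that $h^0(\E)=\chi=k+1$ and $\PP H^0(\E)=\PP^k$ as in Example \ref{ex:romarosica}.

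Next I would compute this differential. Using the canonical identifications $T_{[s]}\PP H^0(\E)\cong H^0(\E)/\langle s\rangle$ and $T_{[W_s]}S^{[k+\eps]}\cong \Hom_{\O_S}(I_{W_s},\O_{W_s})$, the map $d\overline{g}_{[s]}$ admits the standard local description: writing $\E$ locally as $\O_S^{2}$ and $s=(f,g)$, so that $I_{W_s}=(f,g)$, a tangent vector $\dot{s}=(\dot f,\dot g)$ is sent to the homomorphism $h=af+bg\mapsto a\dot f+b\dot g \bmod I_{W_s}$; this is well defined because the Koszul syzygy $(g,-f)$ of the regular sequence $(f,g)$ yields $g\dot f-f\dot g\in (f,g)$. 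A pointwise inspection at the support of $W_s$ then shows that $\dot s$ lies in the kernel of $d\overline{g}_{[s]}$ if and only if both local components of $\dot s$ lie in $I_{W_s}$, that is, if and only if $\dot s\in H^0(\E\otimes I_{W_s})$. Consequently the kernel of $d\overline{g}_{[s]}$ is exactly $H^0(\E\otimes I_{W_s})/\langle s\rangle$, and injectivity of the differential is equivalent to the single cohomological statement $h^0(\E\otimes I_{W_s})=1$.

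The heart of the argument is then to prove $h^0(\E\otimes I_{W_s})=1$. Let $s'\in H^0(\E\otimes I_{W_s})$ be nonzero, i.e. a section of $\E$ vanishing along $W_s$. By $\mu_L$-stability every nonzero section of $\E$ has $0$-dimensional zero scheme, necessarily of length $c_2(\E)=k+\eps$; since $Z(s')\supseteq W_s$ and both have length $k+\eps$, we obtain $Z(s')=W_s=Z(s)$. As $\overline{g}([s'])=Z(s')=Z(s)=\overline{g}([s])$, the injectivity of $\overline{g}$ from Theorem \ref{thm:contrazioni} forces $[s']=[s]$, i.e. $s'\in\langle s\rangle$. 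Hence $h^0(\E\otimes I_{W_s})=1$, the differential $d\overline{g}_{[s]}$ is injective for every $[s]$, and $\overline{g}$ is an embedding. For $\eps=1$ the image lies in the smooth subvariety $S^{[k]}_1\subset S^{[k+1]}$, through which $\overline{g}$ factors set-theoretically and hence (as $\PP^k$ is reduced) as morphisms; since the inclusion of a submanifold has injective differential, $d\overline{g}$ remains injective as a map into $TS^{[k]}_1$, so $\overline{g}$ is an embedding into $S^{[k]}_1$ as well. Finally, the image $\PP^k\subset S^{[k]}_\eps$ contains lines, whose class is $R_{p,0,k}$ by the last assertion of Theorem \ref{thm:contrazioni}, yielding the asserted Lagrangian $k$-plane with lines of class $R_{p,0,k}$.

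I expect the main obstacle to be the correct identification of $d\overline{g}_{[s]}$ together with its kernel $H^0(\E\otimes I_{W_s})/\langle s\rangle$; in particular, the point that makes the cohomological reduction work at \emph{every} (and not merely the general) $[s]$ is that $\mu_L$-stability guarantees that all nonzero sections of $\E$ are regular with $0$-dimensional zero locus of length exactly $c_2(\E)=k+\eps$, so that any section vanishing on $W_s$ is automatically forced to cut out $W_s$ itself.
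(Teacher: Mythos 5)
Your proof is correct, and for the heart of the argument --- the injectivity of $d\overline{g}_{[s]}$ at every point $[s]$ --- it takes a genuinely different route from the paper. The paper identifies $T_{[Z]}S^{[k+\eps]}$ with $H^0(\N_{Z/S})$ and describes $d\overline{g}_{[s]}(t)$ concretely through the curve $\Gamma_t$ along which the pencil $\langle s,t\rangle$ drops rank: the image of $t$ is produced from a section of $\ext^1_{\O_S}(B,\O_S)$ and lands in $H^0(\N_{Z/\Gamma_t})\subset H^0(\N_{Z/S})$, and injectivity is then checked by a case analysis according to whether $\Gamma_t\simeq\Gamma_{t'}$ (using the injectivity of $G(2,H^0(\E))\to\G^1_{k+\eps}$, which again rests on $h^1(\E)=0$) or $\Gamma_t\not\simeq\Gamma_{t'}$ (using the normal bundle sequence \eqref{eq:osvaldo}). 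You instead use the standard Koszul-complex computation of the differential of the zero-locus map $[s]\mapsto Z(s)$, identifying $\ker d\overline{g}_{[s]}$ with $H^0(\E\otimes I_{W_s})/\langle s\rangle$, and then kill this kernel by the length/stability observation that any nonzero $s'\in H^0(\E\otimes I_{W_s})$ has $Z(s')=W_s$, whence $[s']=[s]$ by injectivity of $\overline{g}$. Note that your reduction and the injectivity you quote are two faces of the same vanishing: $h^0(\E\otimes I_{W_s})=\dim\Hom(\E,\det\E\otimes I_{W_s})=1$ follows from applying $\Hom(\E,-)$ to \eqref{ses2'} together with stability, simplicity and $h^1(\E)=0$, which is exactly the computation in Step I of Theorem \ref{thm:contrazioni}. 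Your version buys a uniform, point-independent argument and makes explicit two things the paper leaves implicit (proper $+$ injective $+$ unramified $\Rightarrow$ closed immersion, and the factorization through the submanifold $S^{[k]}_1\subset S^{[k+1]}$ when $\eps=1$); the paper's version, though longer, exhibits the differential in terms of the geometry of the curves $\Gamma_t$, in the spirit of the rest of Section \ref{sec:sei}. The one caveat, shared with the paper's own proof, is that $h^1(\E)=0$ is needed for the specific bundle $\E$ cutting out the component of $g^{-1}(S^{[k]}_\eps)$, while Proposition \ref{prop:importante}(i) guarantees it only for general $[\E]\in\M$; this is vacuous for $\eps=0$, where $\M$ is a point.
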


\begin{proof}
It is enough to show that for all $[s]\in\mathbb{P}H^0(\E)$ the differential
$$
d\overline{g}_{[s]}:T_{[s]}\mathbb{P}(H^0(\E))\to T_{[Z]} S^{[k+\eps]}
$$ 
is injective, where $Z$ is the zero scheme of $s$. First of all, we recall the isomorphisms $$T_{[s]}\mathbb{P} H^0(\E)\simeq\Hom(\mathbb{C}, H^0(\E )/\langle s\rangle)\simeq H^0(\E)/\langle s\rangle$$ and $T_{[Z]} S^{[k+\eps]}=H^0(\N_{Z/S})$, and use them in order to describe $d\overline{g}_{[s]}$. Given $t\in H^0( \E )/\langle s\rangle$, the evaluation map $ev:\langle t,s\rangle\otimes \O_S\to\E$ is injective and drops rank along a curve $\Gamma_t\in\{L\}$ containing $Z$. We denote by $B$ the cokernel of $ev$, which is supported on $\Gamma_t$. Since $h^2(\E)=h^1(\E)=0$ by Proposition \ref{prop:importante}, we obtain isomorphisms:
\begin{equation}\label{identificazione}
\langle t,s\rangle \simeq H^1(B)\simeq H^0(B_1)^\vee,
\end{equation}
where $B_1:=\ext^1_{ \O_S}(B,\O_S)$. By  \eqref{eq:andreas3}, one has $B_1=\hom_{\O_{\Gamma_t}}(I_{Z/\Gamma_t},\O_{\Gamma_t})$. Let  $\sigma_s,\,\sigma_t\in  H^0(B_1)$ denote the duals of the images of $s$ and $t$ under \eqref{identificazione}, and consider the following short exact sequence:
\[
\xymatrix{
0 \ar[r] &  \O_{\Gamma_t} \ar[r]^{\hspace{-1.3cm}\sigma_s} &  \hom_{\O_{\Gamma_t}}(I_{Z/\Gamma_t},\O_{X_t}) \ar[r]^{\alpha_t} &  \hom_{\O_{\Gamma_t}}(I_{Z/\Gamma_t},\O_{Z})
\ar[r] &  0.
}
\]
Then we have $0\neq H^0(\alpha_t)\sigma_t\in H^0(\hom_{O_{\Gamma_t}}(I_{Z/\Gamma_t},\O_{Z}))=H^0(\N_{Z/{\Gamma_t}}) \hookrightarrow  H^0(\N_{Z/S})$,  where the last inclusion is given by taking cohomology in 
\begin{equation} \label{eq:osvaldo}
\xymatrix{
0 \ar[r] &  \N_{Z/\Gamma_t} \ar[r]^{\hspace{-0.1cm}\iota_t} \ar[r] &  \N_{Z/S} \ar[r] & \N_{\Gamma_t/S}|_{Z} \ar[r] &    0.
}
\end{equation}
\color{black}
By construction, $d\overline{g}_{[s]}(t)=H^0(\iota_t)\circ H^0(\alpha_t)\sigma_t\in H^0(\N_{Z/S})$.

We are now able to prove the injectivity of $d\overline{g}_{[s]}$. Let $t, t' \in 
 H^0(\E)/\langle s\rangle$ such that 
$d\overline{g}_{[s]}(t)=d\overline{g}_{[s]}(t')$.
We first assume that $\Gamma_t\simeq \Gamma_{t'}$. Since $h^1(\E)=0$, then the natural map $h:G(2,H^0(\E))\to \G^1_{k+1}(\vert L\vert)$ is injective; indeed, any $V\in G(2,H^0(\E))$ defines a short exact sequence like \eqref{eq:conv} and 
and it is enough to tensor it with $\E^\vee$ in order to conclude that $\mathbb{P}(\Hom(\E,B))$ is a point. Since $h(\langle s, t\rangle)=h(\langle s, t'\rangle)=(\Gamma_t, \O_{\Gamma_t}(Z))$, we have $\langle s, t\rangle=\langle s, t'\rangle $, that is, $t'=\lambda t$ for some $\lambda\in \mathbb{C}$. Then, $\sigma_{\lambda t}=\lambda\sigma_t\neq \sigma_t$ unless $\lambda=1$. Therefore, we can assume $\Gamma_{t'}\not\simeq \Gamma_t$; it is then clear from \eqref{eq:osvaldo} that the section $H^0(\iota_{t'})\circ H^0(\alpha_{t'})\sigma_{t'}\in H^0(\N_{Z/S})$ does not lie in the image of $H^0(\iota_{t})$. This concludes the proof.
\end{proof}

\end{document}